\documentclass[11pt,reqno]{amsart}

\usepackage[shortlabels]{enumitem}

\textheight=21truecm
\textwidth=15truecm
\voffset=-1cm
\hoffset=-1cm

\usepackage{color}
\usepackage{amsmath, amsthm, amssymb}
\usepackage{amsfonts}

\usepackage[ansinew]{inputenc}
\usepackage[dvips]{epsfig}
\usepackage{graphicx}
\usepackage[english]{babel}
\usepackage{hyperref}
\theoremstyle{plain}
\newtheorem{theorem}{Theorem}[section]
\newtheorem{corollary}[theorem]{Corollary}
\newtheorem{lemma}[theorem]{Lemma}
\newtheorem{proposition}[theorem]{Proposition}

\theoremstyle{definition}
\newtheorem{definition}[theorem]{Definition}

\theoremstyle{remark}
\newtheorem{remark}[theorem]{Remark}

\numberwithin{equation}{section}

\newcommand{\average}{{\mathchoice {\kern1ex\vcenter{\hrule height.4pt
				width 6pt depth0pt} \kern-9.7pt} {\kern1ex\vcenter{\hrule
				height.4pt width 4.3pt depth0pt} \kern-7pt} {} {} }}

\def\R{\mathbb{R}}
\def\S{\mathbb{S}}

\def\N{\mathbb{N}}

\begin{document}
	
	\title[$C^{2,\alpha}$ regularity of the free boundary]{$C^{2,\alpha}$ regularity of   free boundaries in\\ parabolic non-local obstacle problems}
	
	\author{Teo Kukuljan}
	\address{Universitat de Barcelona,
		Departament de Matematiques i Inform\`atica, Gran Via de les Corts Catalanes 585,
		08007 Barcelona, Spain.}
	\email{tkukuljan@ub.edu}
	
	\thanks{The author has received funding from the European Research Council (ERC) under the Grant Agreement No 801867.
	The paper was written under supervision of Xavier Ros-Oton, we thank him for proposing many ideas and approaches.}
	\thanks{Data sharing not applicable to this article as no datasets were generated or analysed during the current study.}
	\keywords{Parabolic equations, obstacle problems, non-local operators, boundary Harnack, free boundary, boundary regularity.}
	\subjclass[2010]{35R35, 47G20.}
	
	\begin{abstract}
		We study the regularity of the free boundary in the parabolic obstacle problem for the fractional Laplacian $(-\Delta)^s$ (and more general integro-differential operators) in the   regime $s>\frac{1}{2}$. We prove that once the free boundary is $C^1$ it is actually $C^{2,\alpha}$. 
		
		To do so, we establish a boundary Harnack inequality in $C^1$ and $C^{1,\alpha}$ (moving) domains, providing that the quotient of two solutions of the linear equation, that vanish on the boundary, is  as smooth as the boundary.
		
		As a consequence of our results we also establish for the first time optimal regularity of such solutions to nonlocal parabolic equations in moving domains. 
	\end{abstract}

	\maketitle
	
\section{Introduction}	

	The fractional obstacle problem is the following
	\begin{align*}
		\min\{ (-\Delta)^s u , u-\varphi \} &= 0 \quad\text{ in } \R^n,\\
		\lim _{|x|\to\infty}u(x) &= 0,
	\end{align*}
	where $\varphi\colon\R^n\to\R$ is a given function called the obstacle and 
	$$(-\Delta)^s u (x) = c_{n,s}p.v.\int_{\R^n}(u(x)-u(x+y))|y|^{-n-2s}dy,$$
	for some $s\in(0,1)$. The operator $(-\Delta)^s$ is called the fractional Laplacian.
	This problem (and its parabolic version) arises in the study of the optimal stopping problems for stochastic processes for example when modelling the prices of American options. For more information see \cite{CT04}. 
	
	The study of the fractional obstacle problem was initiated by Silvestre   in \cite{S07} and by Caffarelli, Salsa and Silvestre in \cite{CSS08}. Since then a lot of effort was put in studying this problem and is nowadays quite well understood. 
	The interest of study in the obstacle problems is twofold. On one hand we are interested in regularity of the solutions, and on the other one we want to understand the set $\partial\{u>\varphi\}$, called the free boundary. 
	In \cite{CSS08} they showed the optimal $C^{1+s}$ regularity of solutions, moreover they proved that at any free boundary point $x_0\in\partial\{u>\varphi \}$ exactly one of the following two statements holds
	$$\begin{array}{lcccccl}
	\text{(i) }\quad 0&<&cr^{1+s}&\leq& \sup_{B_r(x_0)} (u-\varphi)&\leq& Cr^{1+s}\\
	\text{(ii) } &&0&\leq&\sup_{B_r(x_0)} (u-\varphi)&\leq &Cr^{2}
	\end{array}\quad\quad\forall r\in(0,r_0).$$
	The points satisfying $\text{(i)}$ are called regular points, they form an open subset of the free boundary, and the free boundary is $C^{1,\alpha}$  there for some $\alpha>0$. 
	Later on it was established  that near regular points the free boundary is in fact $C^\infty$ if the obstacle is $C^\infty$ (see \cite{JN17,KRS19}). For results about the degenerate/singular points - the ones satisfying $\text{(ii)}$ - we refer to \cite{FR21,F20,GR19} and references therein. 
	
	Many methods used for establishing these results strongly depend on the tools only available in the case of the fractional Laplacian. Therefore it was also challenging to extend the above mentioned results to a more general class of integro-differential operators. The dichotomy of the regular and degenerate points, as well as the $C^{1,\alpha}$ regularity of the free boundary near regular points was proved in \cite{CRS17}, and the higher order regularity of the free boundary in \cite{AR20}.
	
	Much less is known in the parabolic version of the problem:
	\begin{align*}
	\min\{ \partial_t + (-\Delta)^s u , u-\varphi \} &= 0 \quad\text{ in } \R^n\times(0,T),\\
	u(\cdot,0) &= \varphi.
	\end{align*}
	Notice that the nature of the problem strongly depends on the value of the parameter $s$. For $s>\frac{1}{2}$ the fractional Laplacian is the leading term (the subcritical regime), while for $s<\frac{1}{2}$ the time derivative is (the supercritical regime). 
	The regularity of solutions was first addressed by Caffarelli and Figalli in \cite{CF13}, where they proved that the solutions are $C^{1+s}_x$ in space and  $C^{\min(\frac{1+s}{2s},2)-\varepsilon}_t$ in time. 
	Later on in \cite{BFR18} Barrios, Figalli and Ros-Oton showed that when $s>\frac{1}{2}$ we have the analogous dichotomy as in the elliptic case, and that the free boundary is $C^{1,\alpha}$ in space and time near regular free boundary points. 
	Furthermore, very recently in \cite{RT21} Ros-Oton and Torres-Latorre improved the regularity of solutions in the supercritical regime $s<\frac{1}{2}$ to $C^{1,1}$ in space and time, which is optimal. Finally in  a forthcoming paper \cite{FRS22} Figalli, Ros-Oton and Serra  extended the results from \cite{BFR18} to a more general class of operators as well as to the case $s=\frac{1}{2}$. 	
	
	Despite all these results nothing is known though about the higher regularity of the free boundary. An open question that seems quite challenging is the following: 
	$$\textit{Is it true that the free boundary is $C^\infty$ (at least in space) near regular points? }$$
	
	The goal of this paper is to study this question for $s>\frac{1}{2}$ and establish that near regular points the free boundary is $C^{2,\alpha}$. 
	We consider the class of non-local operators of the form
	\begin{align}\label{eq:operatorForm}
	\begin{split}
	Lu(x,t) = p.v.\int_{\R^n}(u(x,t)-u(y,t)) K(x-y)dy\\
	\lambda |y|^{-n-2s} \leq K(y)\leq \Lambda |y|^{-n-2s},\text{ }y\in\R^n,\quad\text{ }K\text{ is even,  homogeneous},
	\end{split}
	\end{align}
	for $0<\lambda\leq \Lambda$, called the ellipticity constants.	
	Our main result reads as follows. (We refer to Section \ref{sec:notation} for definition of the parabolic H\"older spaces $C^\beta_p$.)
	
	\begin{theorem}\label{thm:1.1}
		Let $s\in(\frac{1}{2},1)$. Let $L$ be as in \eqref{eq:operatorForm}, with its kernel $K\in C^5(\S^{n-1})$. Let $u$ be  a solution of 
		$$\begin{array}{rcll}
		\min\{(\partial_t+L)u,u-\varphi \}&=&0\quad &\text{ in }\R^n\times(0,T)\\
		u(\cdot,0)&=&\varphi\quad &\text{ in }\R^n,
		\end{array}$$
		with $\varphi\in C^{4}(\R^n)$. 
		
		Then the free boundary $\partial  \{u>0\}$ is $C^{2+\alpha}_p$ near regular free boundary points for some $\alpha>0$.
	\end{theorem}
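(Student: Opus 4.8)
\emph{Proof proposal.}
The plan is to bootstrap the regularity of the free boundary using the boundary Harnack principle for $(\partial_t+L)$ in $C^{1,\alpha}_p$ moving domains (the main linear result of this paper), together with the known first--order regularity at regular points from \cite{BFR18} (and \cite{FRS22}). Fix a regular free boundary point, which after a translation and rotation we take to be the origin with inward spatial normal $e_n$. Set $w:=u-\varphi\ge 0$; near the origin the free boundary $\Gamma:=\partial\{w>0\}$ coincides with $\partial\{u>0\}$ and, by \cite{BFR18,CF13}, in $Q_1$ it is a graph $x_n=g(x',t)$ with $g\in C^{1,\alpha_0}_p$ for some $\alpha_0>0$, $g(0,0)=0$, $\nabla_{x',t}g(0,0)=0$; moreover $w$ is $C^{1+s}$ in space, vanishes to order $d^{s}$ in the sense that $|\nabla_x w(x,t)|\le C\,d(x,t)^{s}$ where $d$ is the parabolic distance to $\Gamma$, one has the non-degeneracy $\partial_{e_n}w\ge c\,d^{s}>0$ in $D:=\{w>0\}\cap Q_{3/4}$, and (by monotonicity in time and the flatness of $\Gamma$) $\partial_t w\ge 0$ and $\partial_e w\ge 0$ for $e$ in a cone of directions around $e_n$. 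The key observation is that it suffices to prove that $\partial_i w/\partial_{e_n}w$ (for $i=1,\dots,n-1$) and $\partial_t w/\partial_{e_n}w$ extend to $C^{1,\alpha}_p$ functions up to $\Gamma$: since $w$ vanishes to order $1+s>1$ at regular points and blow-ups converge to $c_0\,(x\cdot\nu)_+^{1+s}$ with $\nu$ the inward normal, the boundary trace of these quotients equals $-\partial_i g$ and $-\partial_t g$ respectively, so $\nabla_{x',t}g\in C^{1,\alpha}_p(\Gamma\cap Q_{1/2})$, i.e. $\Gamma$ is $C^{2,\alpha}_p$ near the origin (this last passage is the standard argument of \cite{AR20}).

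Next I differentiate the equation. Inside $D$ one has $(\partial_t+L)u=0$, hence $(\partial_t+L)w=-(\partial_t+L)\varphi=:g_0$ in $D$, and since $\varphi\in C^{4}(\R^n)$ and $K\in C^{5}(\S^{n-1})$ with $s>\tfrac12$, interior elliptic estimates for the kernel $K$ give $g_0\in C^{2,\gamma}_x$ for $\gamma=2-2s-\varepsilon>0$, so that $F_i:=\partial_i g_0$ and $\partial_t g_0$ lie in $C^{1,\gamma}_p$. Thus each of $v_i:=\partial_i w$ ($i=1,\dots,n-1$), $v_n:=\partial_{e_n}w$ and $v_0:=\partial_t w$ solves $(\partial_t+L)v=F$ in $D$ with $F\in C^{1,\gamma}_p$, and each vanishes on $\Gamma$ at the rate $d^{s}$, while $v_n\asymp d^{s}$. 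The inhomogeneous right-hand side is harmless: either it is incorporated directly into the boundary Harnack statement, or one first subtracts a fixed $C^{1+2s,\gamma}_p$ particular solution $Z_i$ of $(\partial_t+L)Z_i=F_i$ in $Q_{3/4}$, reducing to the homogeneous equation up to a boundary datum of the same regularity, which is absorbed into the statement. (One also uses the cone of monotonicity directions only to write $v_i$ as a difference of two nonnegative solutions, should the boundary Harnack be phrased for nonnegative functions.)

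Now I apply the boundary Harnack. The set $D$ is a $C^{1,\alpha_0}_p$ moving domain, so the boundary Harnack inequality of this paper applied to the pair $(v_i,v_n)$ (and to $(v_0,v_n)$) yields that $\partial_i w/\partial_{e_n}w$ and $\partial_t w/\partial_{e_n}w$ extend to $C^{1,\alpha_1}_p(\overline{D}\cap Q_{1/2})$ functions, with $\alpha_1:=\min(\alpha_0,\gamma)$. By the reduction of the first paragraph this already gives $g\in C^{2,\alpha_1}_p$, i.e. $\Gamma$ is $C^{2,\alpha_1}_p$ near the origin. Finally I iterate once: now $D$ is a $C^{2,\alpha_1}_p$ domain, hence a $C^{1,\beta}_p$ moving domain for every $\beta<1$; re-running the previous step with $\beta\ge\gamma$ upgrades the quotients to $C^{1,\gamma}_p$ up to $\Gamma$, so $\Gamma\in C^{2,\gamma}_p$. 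This proves the theorem with $\alpha:=\gamma=2-2s-\varepsilon>0$ (the precise exponent depending only on $n,s,\lambda,\Lambda$ and the regularity of $\varphi$ and $K$).

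I expect the genuine difficulty to lie entirely in the boundary Harnack inequality in $C^1$ and $C^{1,\alpha}$ moving domains, which is assumed here; within the present deduction, the only points requiring care are (i) checking that $\partial_i w$ solves the linearized equation in $D$ with a truly H\"older right-hand side and vanishes on the moving boundary at the sharp rate $d^{s}$ — this uses that $(\partial_t+L)u=0$ throughout $D$, together with the mapping properties of $L$ on $C^{4}$ functions and the non-degeneracy and $C^{1+s}_x$ bounds for $w$ — and (ii) justifying that the boundary values of the quotients $\partial_i w/\partial_{e_n}w$ and $\partial_t w/\partial_{e_n}w$ are exactly $-\partial_i g$ and $-\partial_t g$, which follows from the convergence of blow-ups at regular points established in \cite{BFR18} and is the parabolic, non-local analogue of the corresponding step in \cite{AR20}.
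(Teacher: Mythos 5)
Your overall strategy coincides with the paper's: differentiate the obstacle problem, express the free boundary normal through the quotients $\partial_i w/\partial_n w$, $\partial_t w/\partial_n w$ (with $w=u-\varphi$), and apply the boundary Harnack in moving domains. That is exactly what Proposition~\ref{prop:C2alphaRegOfTheFB} does. However, there is a genuine gap in your deduction, as well as a couple of imprecisions.

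\textbf{The main gap: you miss the global-in-space time-regularity constraint.} You attribute the bottleneck to the right-hand side ($g_0=-L\varphi\in C^{2,\gamma}_x$, $\gamma=2-2s-\varepsilon$) and to the initial domain regularity, and you propose to iterate the boundary Harnack to push the quotients up to $C^{1,\gamma}_p$, hence $\partial\Omega\in C^{2,\gamma}_p$. This does not work. As spelled out in Remark~\ref{rem:1}, the parabolic non-local interior estimates (hence the boundary Harnack of this paper, see Corollary~\ref{cor:expansion2functions3s}) need a bound on the \emph{global} (in $\R^n\times(-1,1)$) time seminorm $[\,\cdot\,]_{C^{\alpha/(2s)}_t}$ of the input functions, and for the partial derivatives of $w$ this exponent $\alpha$ is capped by $\min\{s,\,2-2s\}$ via \cite{FRS22,CF13}; it cannot be improved by local considerations. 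Consequently, the quotient estimate one can extract is $C^{s+\alpha-\varepsilon}_p$ with $\alpha=\min\{s,2-2s\}$, and no number of iterations in domain regularity increases it, because the bottleneck is not local: re-running the argument with a smoother domain leaves the global $C^{\alpha/(2s)}_t$ exponent unchanged. Your claimed final exponent $\gamma=2-2s-\varepsilon$ is therefore unjustified and, for $s$ close to $\tfrac12$, stronger than what the method delivers (the paper obtains $s+\min\{s,2-2s\}-1-\varepsilon$). Both are positive, so the qualitative theorem is still implied, but your argument for the exponent is incorrect and does not reduce to the paper's.

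\textbf{Two minor imprecisions.} First, the initial free boundary from \cite{BFR18} is only $C^{1,\alpha_0}_p$ with $\alpha_0>0$ small, so $1+\alpha_0$ need not exceed $1+s$ and you cannot invoke the higher-order boundary Harnack in the first step; one must start with the $C^1$-domain version (first part of Theorem~\ref{thm:bdryHarnack} / Corollary~\ref{cor:1-epsRegOfQuotient}), which yields $C^{1-\varepsilon}_p$ quotients and hence a $C^{2-\varepsilon}_p$ free boundary, and only then apply the second part (valid because $2-\varepsilon>1+s$). Your phrasing ``yields $C^{1,\alpha_1}_p$ quotients with $\alpha_1:=\min(\alpha_0,\gamma)$'' after the first step overstates the conclusion. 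Second, subtracting a fixed particular solution $Z_i$ of $(\partial_t+L)Z_i=F_i$ destroys the vanishing of $v_i$ outside $\Omega$, which is essential in the boundary Harnack; this detour is both unnecessary (Theorem~\ref{thm:bdryHarnack} already allows a H\"older right-hand side) and, as written, incompatible with the hypotheses. Likewise the device of splitting $v_i$ into nonnegative parts is not needed, since the hypotheses require only $u_2\geq c_0 d^s$.
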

	
	It remains an open problem to improve the regularity from $C^{2,\alpha}$ to $C^\infty$; see Remark~\ref{rem:1} below.
	
	To prove this result, we exploit the fact that the normal to the free boundary can be expressed with the quotients of partial derivatives of $u-\varphi$, see \cite{AR20,DS15,DS16}. Hence we closely study the boundary behaviour of solutions to 
	\begin{equation}\label{eq:equation}
		\left\lbrace\begin{array}{rcll}
		(\partial_t  + L)w &=& f& \text{in }\Omega\cap Q_1\\
		w&=&0&\text{in }\Omega^c\cap Q_1,
		\end{array}\right. 
	\end{equation}
	for some open set $\Omega\subset\R^{n+1}$ and $Q_1 = \{(x,t)\in\R^{n+1}:\text{ }|x|<1,|t|<1\}$, as the mentioned derivatives solve such equation. There was not much attention put into the studying of the boundary regularity of solutions in \textit{moving} domains, the only known results to the bests of our knowledge (\cite{BWZ17,FR17,G19,RV18}) consider cylindric types of domains. We provide that the solutions grow like distance to the boundary to the power $s$, which in combination with interior regularity estimates gives $C^s_p$ regularity up to the boundary. 	
	Refining the argument, and comparing solutions directly amongst each other, we are able to establish the boundary Harnack inequality of the following type.
	
	\begin{theorem}\label{thm:bdryHarnack}
		Let $s\in(\frac{1}{2},1)$. Let $\Omega\subset \R^{n+1}$ be $C^{1}_p$ in $Q_1$. Let $\gamma,\varepsilon>0$. Let $L$ be an operator of the form \eqref{eq:operatorForm}, with kernel $K\in C^{1-s}(\S^{n-1})$. Assume $u_i\in C_p^{\gamma}(Q_1)\cap L^\infty(\R^n\times(-1,1))$, $i\in\{1,2\}$, solve
		$$\left\lbrace\begin{array}{rcll}
		(\partial_t + L)u_i &=& f_i& \text{in }\Omega\cap Q_1\\
		u_i&=&0&\text{in }\Omega^c\cap Q_1,
		\end{array}\right. $$
		with $\left[ f_i\right]_{C^{1-s}_p(\Omega\cap Q_1)}\leq 1$, and $\left[ u_i\right] _{C^{\frac{1-s}{2s}}_t(\R^n\times(-1,1))}\leq 1$. Assume also that $u_2\geq c_0 d^s$, for some $c_0>0$.
		Then  
		$$\left|\left| \frac{u_1}{u_2}\right|\right| _{C_p^{1-\varepsilon}(\overline{\Omega}\cap Q_{1/2})}\leq C,$$
		where $C>0$ depends only on $n,s,\varepsilon,c_0,G_0,\|K\|_{C^{1-s}(\S^{n-1})}$ and ellipticity constants.
		
		If additionally $\Omega$ is $C^\beta_p$ in $Q_1$ for some $\beta>1+s$, $K\in C^{2\beta+1}(\S^{n-1})$,
		$\left[ f_i\right]_{C^{s}_p(\Omega\cap Q_1)}\leq 1$, and $\left[ u_i\right]_{C^{\frac{1}{2 }}_t(\R^{n}\times(-1,1))}\leq 1$, then 
		$$ \left|\left| \frac{u_1}{u_2} \right|\right|_{C^{2s-\varepsilon}_p(\overline{\Omega}\cap Q_{1/2})}  \leq C,$$
		with $C$ additionally depending on $\|K\|_{C^{2\beta+1}(\S^{n-1})}$.
	\end{theorem}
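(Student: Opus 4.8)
The plan is to prove the boundary Harnack inequality in two stages, treating the $C^1_p$ case first and then bootstrapping to the $C^\beta_p$ case. In both stages the strategy follows the now-classical approach of Caffarelli--Fabes--Mortola--Salsa / De Silva--Savin adapted to the nonlocal parabolic setting: one combines (i) the growth estimate $u_2 \geq c_0 d^s$ with an upper bound $u_i \lesssim d^s$ coming from the optimal boundary regularity of solutions in $C^1_p$ moving domains (which, as the excerpt explains, is established earlier via barriers showing solutions grow like $\mathrm{dist}^s$), so that the quotient $u_1/u_2$ is bounded; and (ii) an expansion/improvement-of-oscillation argument on a dyadic sequence of cylinders $Q_{2^{-k}}(X_0)$ centred at a boundary point. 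The key quantitative tool is a rescaled Hopf-type lemma together with an interior Harnack inequality for $(\partial_t+L)$ applied to well-chosen positive linear combinations $u_1 - a u_2$ and $a u_2 - u_1$, which are solutions of the same equation vanishing on $\Omega^c$; choosing $a$ to be (roughly) the value of the quotient at the relevant scale gives geometric decay of $\mathrm{osc}\, (u_1/u_2)$, hence a modulus of continuity that is Hölder of any exponent below the natural threshold ($1-\varepsilon$ in the first case, $2s-\varepsilon$ in the second).

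First I would set up the normalisation: fix a free-boundary (or boundary) point, rescale so that it sits at the origin, and flatten using the $C^1_p$ (resp.\ $C^\beta_p$) graph of $\partial\Omega$. I would then invoke the already-established barrier estimates to get $c_0 d^s \leq u_2 \leq C d^s$ and $|u_1| \leq C d^s$ on $Q_{3/4}$, using the hypotheses $[f_i]_{C^{1-s}_p}\leq 1$, $[u_i]_{C^{(1-s)/(2s)}_t}\leq 1$ and $\|u_i\|_{L^\infty}\leq C$ to control the nonlocal tails and the source term. The heart of the argument is the iteration: assuming inductively that on $Q_{r}$ one has $a_r d^s \leq u_1 \leq b_r d^s$ (up to lower-order errors controlled by $r^{s+(1-s)} = r$) with $b_r - a_r \leq C r^{1-\varepsilon}$-type control on the oscillation, one applies the interior Harnack inequality in the "bulk" region $\{d \sim r\}\cap Q_{r/2}$ to the nonnegative solution $u_1 - a_r u_2$ (and symmetrically $b_r u_2 - u_1$), and a Hopf lemma near $\partial\Omega$, to deduce $a_{r/2} - a_r \gtrsim (b_r - a_r)$ and $b_r - b_{r/2} \gtrsim (b_r - a_r)$, i.e.\ $b_{r/2} - a_{r/2} \leq (1-\theta)(b_r - a_r)$ for a fixed $\theta>0$. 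Summing the geometric series yields Hölder continuity of the quotient at the boundary, and combining with interior Schauder estimates (interior regularity of $u_i$ away from $\partial\Omega$, which upgrades as the data improves) propagates this to full $C^{1-\varepsilon}_p$ regularity on $\overline\Omega\cap Q_{1/2}$.

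For the second, higher-order statement I would bootstrap: once $u_1/u_2 \in C^{1-\varepsilon}_p$, write $u_1 = (u_1/u_2)\, u_2$ and peel off the constant (and then first-order) part of the quotient at a boundary point, so that $u_1 - P(X) u_2$ (with $P$ a polynomial of degree $<\beta$ built from the Taylor expansion of $u_1/u_2$) again solves an equation of the form $(\partial_t+L)v = g$ in $\Omega\cap Q_r$ with $v=0$ on $\Omega^c$, but now with \emph{improved} vanishing — $|v|\lesssim r^{s + (1-\varepsilon)}$ at scale $r$ — because the commutator $[L, P]u_2$ and the terms coming from $\partial_t(Pu_2)$ are lower order thanks to $K\in C^{2\beta+1}(\S^{n-1})$ and $\beta > 1+s$, plus the refined source bound $[f_i]_{C^s_p}\leq 1$ and the refined time-regularity $[u_i]_{C^{1/2}_t}\leq 1$. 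Re-running the dyadic oscillation-decay argument on $v/u_2$ then gains the extra power, giving $u_1/u_2 \in C^{2s-\varepsilon}_p$.

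The main obstacle I anticipate is making the commutator/tail analysis in the bootstrap step rigorous in the \emph{moving} (time-dependent) domain: the operator $L$ acts only in space while the domain $\Omega$ moves in time, so when one subtracts $P(x,t) u_2$ the quantity $L(Pu_2) - P\, L u_2$ involves the nonlocal integral over regions where $u_2$ is evaluated across the moving boundary, and one must show this error is genuinely of order $r^{s+(1-\varepsilon)}$ rather than merely $r^s$. Controlling this requires careful use of the $C^\beta_p$ regularity of $\partial\Omega$ (to compare $d^s$ at nearby times), the smoothness $K\in C^{2\beta+1}(\S^{n-1})$ (to expand the kernel), and the fine growth/decay estimates on $u_2$; this is where the specific exponents in the hypotheses are consumed. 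A secondary technical point is establishing the Hopf lemma and interior Harnack inequality uniformly with constants depending only on the stated quantities, which should follow from the ellipticity bounds on $K$ via standard nonlocal parabolic Krylov--Safonov-type theory, but needs the tail terms handled by the $L^\infty$ and $C^{\cdot}_t$ hypotheses.
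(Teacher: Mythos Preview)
Your proposal has a genuine gap in the first stage. The Hopf--Harnack oscillation-decay iteration you describe produces a geometric improvement $b_{r/2}-a_{r/2}\le(1-\theta)(b_r-a_r)$ with some \emph{fixed} $\theta\in(0,1)$ coming from the Harnack constant; summing this gives $b_r-a_r\le Cr^{\alpha_0}$ with $\alpha_0=-\log_2(1-\theta)$, a single small exponent, not H\"older continuity of order $1-\varepsilon$ for every $\varepsilon>0$. There is no mechanism in a bare Caffarelli--Fabes--Mortola--Salsa style iteration to push $\alpha_0$ arbitrarily close to $1$; the ``lower-order errors controlled by $r$'' you mention are exactly the content of an expansion $|u_1-q\,u_2|\lesssim r^{1+s-\varepsilon}$, but that expansion is the \emph{conclusion} you need, not something the iteration provides. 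A secondary issue is that the parabolic Harnack inequality for $(\partial_t+L)$ carries a time lag and, in the nonlocal setting, tail dependence, so the step ``apply interior Harnack to $u_1-a_r u_2$ in $\{d\sim r\}$'' is not automatic and would need substantial work to make uniform across scales in a moving domain.

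The paper takes a different route: it proves the expansion $|u_1-q_{(z,\tau)}u_2|\lesssim r^{1+s-\varepsilon}$ (equivalently, the decay $\big[\tfrac{u_1}{u_2}-q_r\big]_{C^{s-2\varepsilon}_p(Q_r)}\le Cr^{1-s}$) directly by a contradiction/blow-up argument. One defines the monotone quantity $\theta(r)=\sup_k\sup_{\rho>r}\rho^{s-1}[\cdot]$, assumes it blows up, rescales to produce a sequence $v_m$ with uniform growth control $[v_m]_{C^{s-2\varepsilon}_p(Q_R)}\le R^{1-s}$, passes to a limit solving the equation in a half-space, and then invokes a Liouville theorem (classifying global solutions in $\{x_n>0\}$ with sub-$2s$ growth as multiples of $(x_n)_+^s$) to reach a contradiction. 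The near-optimal exponent $1-\varepsilon$ is built into the scaling of $\theta$ from the start; no Harnack or Hopf lemma is used. The second stage ($C^{2s-\varepsilon}$) is done the same way, now subtracting $q\,u_2+Q\cdot x\,d^s$ and using a refined Liouville theorem allowing linear-times-$(x_n)_+^s$ growth, with the $C^\beta_p$ domain regularity and $K\in C^{2\beta+1}$ consumed in controlling $L(Q\,d^s)$ --- which is closer in spirit to your bootstrap paragraph, but again relies on compactness rather than iteration.
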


	To obtain the boundary Harnack inequality, we develop expansions of the form $u_1-Qu_2$ at boundary points of orders up to $3s-\varepsilon$, where $Q$ is a polynomial in space variables of degree $1$. In combination with interior regularity results this yields estimates for the quotient as well.  
	\begin{remark}\label{rem:1}
	Note that in the parabolic non-local setting the interior regularity estimates require global time regularity of solutions (see \cite[Lemma 7.1]{FR17}).
	This is also one of the reasons why we only get $C^{2,\alpha}_p$ regularity of the free boundary in Theorem \ref{thm:1.1}, for some $\alpha>0$. Namely, the time derivative of the solution to the obstacle problem is globally $C^{\frac{\gamma}{2s}}_t$, for some $1-s<\gamma < 2-s$ (see \cite{BFR18}, \cite{CF13} and \cite[Corollary 1.6]{FRS22}). Hence we can only use the interior estimates for orders up to $\gamma + 2s$. Therefore with the method we use, we can get the boundary Harnack estimate of order at most  $\gamma + s $, which is smaller than $2$, and hence we can not deduce that the free boundary is better than $C^{2, \alpha}$. 
	\end{remark}
	
	It remains an open problem to decide whether the free boundary is $C^\infty$ near regular points or not.

	As a consequence of our result we also obtain the optimal H\"older regularity of $u$ and of $u/d^s$, thus extending the results of \cite{RV18} to the case of moving domains. More precisely, we prove that any solution of \eqref{eq:equation} satisfies
	$$u\in C^s_{x,t}\quad\quad \text{and}\quad\quad \frac{u}{d^s}\in C^{2s-1}_p,$$
	see Corollary \ref{cor:CsRegularity}. Here $d$ is a regularized space-time distance to the boundary. 

	\subsection{Organisation of the paper}
	We start the body with presenting the notation in Section \ref{sec:notation}. 
	In Section \ref{sec:bdryReg} we provide the results regarding the boundary regularity of solutions to \eqref{eq:equation}, and we prove Theorem \ref{thm:bdryHarnack}. 
	In Section \ref{sec:obstProb} we prove Theorem \ref{thm:1.1}. In the last section of the body, Section \ref{sec:Lds}, we establish results about operator evaluation of the distance function to the power $s$, which is required in some proofs from Section \ref{sec:bdryReg}. 
	At the end there is an appendix where we prove technical auxiliary results, to lighten the body of the paper.
	
\section{Notation and preliminary definitions}\label{sec:notation}
The ambient space is $\R^{n+1} = \R^n\times\R,$ where the first $n$ coordinates we denote by  $x = (x_1,\ldots,x_n)$ and the last one with $t$. Sometimes we furthermore split $x=(x',x_n)$, for $x'=(x_1,\ldots,x_{n-1})$. 
Accordingly we use the multi-index notation $\alpha\in\N_0^{n+1}$, $\alpha = (\alpha_1,\ldots,\alpha_n,\alpha_t)$, $|\alpha|_p=\alpha_1+\ldots+\alpha_n+2s\alpha_t$, and furthermore
$$\partial^\alpha = \left(\frac{\partial}{\partial x_1}\right)^{\alpha_1}\circ\ldots\circ \left(\frac{\partial}{\partial x_n}\right)^{\alpha_n}\circ \left(\frac{\partial}{\partial t}\right)^{\alpha_t}.$$
The gradient operator $\nabla$, differential operator $D$ and Laplace operator $\Delta$  are taken only in $x$ variables.

For $\Omega\subset\R^{n+1}$ we denote the time slits with $\Omega_t = \{x\in\R^n;\text{ }(x,t)\in\Omega \}$ and the distance function to the boundary in space directions only with $d_t(x) = d_x(x,t) = \inf_{z\in\partial\Omega_t}|z-x|,$ while with $d$ we denote  the (regularized) distance to the boundary of the domain in space-time. We also write $|(x,t)-(x',t')|_p = |x-x'| + |t-t'|^{\frac{1}{2s}}$.

We denote by  $Q_r(x_0,t_0)$ the following cylinder of radius $r$ centred at $(x_0,t_0)$, 
$$Q_r(x_0,t_0)= B_r(x_0)\times (t_0-r^{2s},t_0+r^{2s}).$$ 
When $(x_0,t_0) = (0,0)$, we denote it simply $Q_r$. 
We denote $Q_r' = \{|x'|<r\}\times (t_0-r^{2s},t_0+r^{2s})$.

Finally, $C$ indicates an unspecified constant not depending on any of the relevant quantities, and whose value is allowed to change from line to line. We make use of sub-indices whenever we will want to underline the  dependencies of the	constant.

We define the parabolic H\"older seminorms of order $\alpha>0$ in the following way
\begin{definition}
	Let $\Omega$ be an open subset of $\R^{n+1}$ and let $s\in(\frac{1}{2},1)$. For $\alpha\in(0,1]$ we define the parabolic H\"older seminorm of order $\alpha$ as follows
	$$\left[ u\right]_{C^\alpha_p(\Omega)}= \sup_{(x,t),(x',t')\in\Omega}\frac{|u(x,t)-u(x',t')|}{|x-x'|^\alpha+|t-t'|^{\frac{\alpha}{2s}}},$$
	and
	$$\left[ u\right]_{C^{\alpha}_t(\Omega)} = \sup_{(x,t),(x,t')\in\Omega}\frac{|u(x,t)-u(x,t')|}{|t-t'|^{\alpha}}.$$
	If $\alpha\in(1,2s]$, we set
	\begin{align*}
	\left[ u\right]_{C^\alpha_p(\Omega)}&=  \left[ \nabla u\right]_{C^{\alpha-1}_p(\Omega)}+\left[ u\right]_{C^{\frac{\alpha}{2s}}_t(\Omega)}.
	\end{align*}
	For bigger exponents $\alpha>2s$, we set
	$$\left[ u\right]_{C^\alpha_p(\Omega)}  = \left[ \nabla u\right]_{C^{\alpha-1}_p(\Omega)}+\left[\partial_t u\right]_{C^{\alpha-2s}_p(\Omega)}.$$
	We furthermore define the parabolic H\"older space of order $\alpha$
	$$C^\alpha_p(\Omega) = \{f;\text{ }\left[ f\right]_{C^\alpha_p(\Omega)}<\infty,\text{ }D^k\partial_t^l f \in C^0(\Omega) \text{ whenever }k+2sl\leq \alpha \}.$$
\end{definition}

We are now able to define what means that a set is $C^\alpha_p$.

\begin{definition}\label{definition1.1}
	Let $\Omega\subset \R^{n+1}$ with $0\in\partial\Omega$, $r>0$, and $\alpha>0$. We say that $\Omega$ is $C^\alpha_p$ in $Q_r$, if $\partial\Omega\cap Q_r$ is a graph over the $n$-th coordinate of some function $g\in C^\alpha_p(Q_r')$, with $\left[ g\right]_{C^\alpha_p(Q_r')}\leq G_0$, for some $G_0>0$. 
\end{definition}

In the paper we work with a distance function to the boundary. Since we need it to have more regularity than just the euclidean distance, we give the precise statement in the following definition. 

\begin{definition}\label{def:generalisedDistance}
	Let $\Omega$ be $C^{\beta}_p$ in $Q_1$, for some $\beta>1.$ 
	We denote by  $d$ a function satisfying
	$$d\in C^{\beta}_p(\R^{n+1})\cap C^\infty_x(\{d>0\}),\quad C^{-1}\operatorname{dist}(\cdot,\partial\Omega\cap Q_1)\leq d\leq C \operatorname{dist}(\cdot,\partial\Omega\cap Q_1) \text{ in }\Omega\cap Q_1,$$
	$$|D^kd|\leq Cd^{\beta-k},\quad \text{in }\{d>0\},\text{ for all }k> \beta .$$
	Any such function is called a \textit{regularized distance}.	 
	
	Furthermore we denote by  $\psi$ a diffeomorphism $\psi\colon \R^{n+1}\to\R^{n+1}$, such that it holds $\psi(\Omega\cap Q_1) = Q_1\cap\{x_n>0\}$, 
	and that  $\psi\in C^\beta_p(\R^{n+1})\cap C^2_x(\{d>0\})$, with
	$$|D^k \psi|\leq C d^{\beta-k},$$
	for all $k>\beta$, in $\{d>0\}. $ 
	
	The construction of $d$ and $\psi$ is provided in Lemma \ref{lem:generalisedDistanceExsistence}.
\end{definition}


\section{Boundary regularity in moving domains}\label{sec:bdryReg}

	In this section we prove the results regarding boundary regularity of solutions to \eqref{eq:equation}. We follow the ideas from \cite{FR17,RV18}, but in our setting the domain does not need to be cylindric. The main tool is using contradiction arguments in combination with blow-up techniques. This is why we work with domains that are at least $C^1_p$ near the origin, since they give a half-space after blowing up.
	Let us stress that for all the estimates we need to assume that the solutions are already H\"older continuous with some (small) positive exponent. As we use these estimates on the derivatives of the solution to the obstacle problem, which is $C^{1,\alpha}$, this does not cause issues.
	
	\subsection{H\"older estimates}

	We begin with establishing a-priori boundary estimates for orders smaller than $s$. 

	\begin{proposition}\label{prop:holderBoundaryEstimate}
		Let $s\in(\frac{1}{2},1)$. Let  $\Omega\subset \R^{n+1}$ be $C^{1}_p$ in $Q_1$. Let $\gamma\in(0,s)$ and $\varepsilon>0$. Let $L$ be an operator of the form \eqref{eq:operatorForm}. Assume $u\in C_p^{\gamma}(Q_1)$ is a solution of
		$$\left\lbrace\begin{array}{rcll}
		(\partial_t  + L)u &=& f& \text{in }\Omega\cap Q_1\\
		u&=&0&\text{in }\Omega^c\cap Q_1,
		\end{array}\right. $$
		with $fd^{s}\in L^\infty(\Omega\cap Q_1)$.
		Then 
		$$\left[ u\right]_{C_p^{\gamma}( Q_{1/2})}\leq  C\left(||fd^{s}||_{L^\infty(\Omega\cap Q_1)} +\sup_{R>1}R^{\varepsilon-2s} ||u||_{L^\infty(B_R\times (-1,1))}\right).$$
		The constant $C$ depends only on $n,s,\gamma,\varepsilon,G_0$ and ellipticity constants.
	\end{proposition}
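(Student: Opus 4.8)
The plan is to run the contradiction–compactness–blow-up scheme of \cite{FR17,RV18} in a way that accommodates the moving domain, reducing everything to one scaling-invariant ingredient, the \emph{boundary growth bound}: setting $M:=\|fd^{s}\|_{L^\infty(\Omega\cap Q_1)}+\sup_{R>1}R^{\varepsilon-2s}\|u\|_{L^\infty(B_R\times(-1,1))}$ (and assuming, as one may, $\varepsilon<2s-\gamma$), I would show
\[
\|u\|_{L^\infty(Q_r(z_0))}\le C\,r^{\gamma}M\qquad\text{for every }z_0\in\partial\Omega\cap Q_{1/2},\ \ r<\tfrac14 ,
\]
where $u(z_0)=0$ because $u$ vanishes on $\Omega^c\cap Q_1\supset\partial\Omega\cap Q_1$. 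Granting this, the $C^{\gamma}_p(Q_{1/2})$ seminorm estimate follows by a routine two-case argument for $z_1,z_2\in Q_{1/2}$ with $r:=|z_1-z_2|_p$: if both points lie within a fixed multiple of $r$ from $\partial\Omega$, the growth bound at a nearby boundary point bounds $|u(z_1)-u(z_2)|$ directly; otherwise $\rho:=d(z_1)\gtrsim r$, and one rescales $Q_{\rho/8}(z_1)$ to unit size, uses $\|f\|_{L^\infty(Q_{\rho/2}(z_1))}\lesssim\rho^{-s}M$ together with $\|u\|_{L^\infty(Q_{\rho/2}(z_1))}\lesssim\rho^{\gamma}M$ (from the growth bound), and invokes the interior estimates for $(\partial_t+L)$ \cite{FR17}, which apply since $\gamma<s<2s-\varepsilon$.

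To prove the growth bound I would argue by contradiction: if it fails there are $C^{1}_p$ domains $\Omega_k$ with the same $G_0$, operators $L_k$ as in \eqref{eq:operatorForm}, and solutions $u_k$ of the associated equations violating it with $C=k$; normalising by $M$ one may assume $\|f_kd_k^{s}\|_{L^\infty}\le1$, $\sup_{R>1}R^{\varepsilon-2s}\|u_k\|_{L^\infty(B_R\times(-1,1))}\le1$, and $\Phi_k:=\sup_{z_0\in\partial\Omega_k\cap Q_{1/2}}\sup_{0<r<1/4}r^{-\gamma}\|u_k\|_{L^\infty(Q_r(z_0))}\to\infty$. Since $\|u_k\|_{L^\infty(Q_{1/2})}\le1$, the supremum over $r\ge\delta$ is at most $\delta^{-\gamma}$, so a standard critical-scale selection yields $\rho_k\to0$, points $\bar z_k\in\partial\Omega_k\cap Q_{1/2}$, and a sequence $M_k\to\infty$ with $\rho_k^{-\gamma}\|u_k\|_{L^\infty(Q_{\rho_k}(\bar z_k))}\ge\tfrac12 M_k$ and $r^{-\gamma}\|u_k\|_{L^\infty(Q_r(z_0))}\le M_k$ for all $z_0\in\partial\Omega_k\cap Q_{1/2}$ and all $r\ge\rho_k$. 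I then set
\[
v_k(x,t):=\frac{u_k\big(\bar z_k+(\rho_k x,\ \rho_k^{2s}t)\big)}{\|u_k\|_{L^\infty(Q_{\rho_k}(\bar z_k))}},
\]
letting $\Omega_k'$ denote the corresponding parabolic rescaling of $\Omega_k$ about $\bar z_k$ and $d_k'$ its regularised distance. Then $\|v_k\|_{L^\infty(Q_1)}=1$; the control at scales $\ge\rho_k$ gives $\|v_k\|_{L^\infty(Q_R(z'))}\le 2R^{\gamma}$ for $z'\in\partial\Omega_k'$ in the relevant range and $1\le R\le(4\rho_k)^{-1}$, which the tail hypothesis upgrades to $\|v_k\|_{L^\infty(Q_R)}\le C_0 R^{2s-\varepsilon}$ for all $R\ge1$, uniformly in $k$; and $v_k$ solves $(\partial_t+L_k')v_k=g_k$ in $\Omega_k'\cap Q_{(4\rho_k)^{-1}}$, $v_k=0$ outside, with $\|g_k(d_k')^{s}\|_{L^\infty}\le C\rho_k^{s-\gamma}/M_k\to0$. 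Because the $\Omega_k$ are uniformly $C^{1}_p$, after a rotation bounded in terms of $G_0$ the rescaled domains converge locally to a half-space $\{x_n>h(t)\}$ with $h\in C^{1/(2s)}_t$ and $h(0)=0$ — essentially flat, which is the reason for assuming $\Omega\in C^{1}_p$.

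For the compactness I would combine two inputs: interior estimates for $(\partial_t+L_k')$ give uniform $C^{\alpha}$ bounds on $v_k$ on sets at a fixed distance from $\partial\Omega_k'$ (the $g_k$ and the tails of $v_k$ being uniformly controlled there), while near $\partial\Omega_k'$ the $C^1_p$ regularity provides a uniform exterior-Lipschitz condition which, together with $\|g_k(d_k')^s\|_{L^\infty}\le C$, yields via a barrier for $L_k'$ a uniform $C^{\gamma_0}$ modulus of continuity up to the boundary for some small $\gamma_0>0$ (this is where the estimates of Section~\ref{sec:Lds} on $L$ applied to powers of the distance enter). Hence, along a subsequence, $v_k\to v_\infty$ locally uniformly, where $v_\infty$ is continuous, vanishes on $\{x_n\le h(t)\}$, solves $(\partial_t+L_\infty)v_\infty=0$ in $\{x_n>h(t)\}$ for a limiting operator $L_\infty$ of the form \eqref{eq:operatorForm}, satisfies $\|v_\infty\|_{L^\infty(Q_1)}=1$, and obeys $\|v_\infty\|_{L^\infty(Q_R)}\le 2R^{\gamma}$ for all $R\ge1$ with $\gamma<s$. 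A Liouville-type classification for this (moving) half-space problem — a global solution vanishing on the complement of the half-space and growing strictly slower than $|x|^{s}+|t|^{1/2}$ must vanish identically — then gives $v_\infty\equiv0$, contradicting $\|v_\infty\|_{L^\infty(Q_1)}=1$, and this proves the growth bound and with it the proposition.

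The hard part, I expect, is concentrated at the moving boundary and is twofold. First, obtaining the uniform modulus of continuity of the $v_k$ up to $\partial\Omega_k'$: this needs an effective barrier for $L_k'$ built from the exterior-Lipschitz condition coming from $C^{1}_p$ together with the precise decay $\|fd^{s}\|_{L^\infty}$ of the right-hand side, hence the analysis of $L$ on powers of the regularised distance carried out in Section~\ref{sec:Lds}. Second, the Liouville theorem itself: the blow-up of a merely $C^{1}_p$ set need not be a static half-space but only $\{x_n>h(t)\}$ with $h\in C^{1/(2s)}_t$, so the classification must be performed in this time-dependent geometry, and it is the subcriticality $\gamma<s$ — strictly below the homogeneity $s$ of the canonical half-space solution $(x_n)_+^{s}$ — that makes the argument collapse to $v_\infty\equiv0$.
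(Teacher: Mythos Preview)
The overall contradiction--compactness--blow-up--Liouville scheme is correct, but your implementation is considerably more involved than the paper's, and one of your ingredients does not close as written.

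The paper does not first prove an $L^\infty$ boundary growth bound and then upgrade via interior estimates. Instead it establishes directly (Lemma~\ref{lem:fakeBoundaryReg}) the self-improving inequality
\[
[u]_{C_p^{\gamma}(\Omega\cap Q_{1/2})}\le \delta\,[u]_{C_p^{\gamma}(\R^n\times(-1,1))}+C\big(\|fd^s\|_{L^\infty}+\|u\|_{L^\infty}\big)
\]
for every $\delta>0$, by the same blow-up method but normalising by the seminorm $[u_k]_{C_p^\gamma}$ rather than by an $L^\infty$ quantity. The key gain is that the rescaled functions then carry a uniform $C_p^\gamma$ bound on every compact set \emph{for free}, so Arzel\`a--Ascoli yields compactness with no need for boundary barriers. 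Proposition~\ref{prop:holderBoundaryEstimate} follows by applying this lemma to $\eta u$ for a smooth cutoff $\eta$, estimating $L((1-\eta)u)$ via the tail hypothesis, and absorbing the $\delta$-term by the standard iteration lemma \cite[Lemma~2.23]{FR20} together with a covering argument.

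Two of your ``hard parts'' are therefore unnecessary. More seriously, the barriers you invoke --- the estimates on $L(d^s)$ from Section~\ref{sec:Lds} --- require $\Omega\in C^\beta_p$ with $\beta>1+s$, which is strictly stronger than the $C^1_p$ hypothesis of the proposition; so your route to boundary compactness would not close under the stated assumptions. As for the Liouville step, the paper passes to the static half-space $\{x_n>0\}$ and invokes \cite[Theorem~2.1]{FR17} directly, without the moving-half-space classification you anticipate.
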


	We prove it in two steps. First we establish a weaker estimate of similar type in the following lemma and then prove that it in fact implies the wanted inequality.

\begin{lemma}\label{lem:fakeBoundaryReg}
	Let $s\in(\frac{1}{2},1)$. Let $\Omega\subset \R^{n+1}$ be $C^1_p$ in $Q_1$. Let $\varepsilon>0$. Let $L$ be an operator of the form \eqref{eq:operatorForm} with the kernel $K$. Assume $u\in C_p^{s-\varepsilon}(\R^n\times (-1,1))$ is a solution of
	$$\left\lbrace\begin{array}{rcll}
	(\partial_t  + L)u &=& f& \text{in }\Omega\cap Q_1\\
	u&=&0&\text{in }\Omega^c\cap Q_1,
	\end{array}\right. $$
	with $fd^{s}\in L^\infty (\Omega\cap Q_1)$.
	Then for every $\delta>0$ there exists $C>0$ so that
	$$\left[ u\right]_{C_p^{s-\varepsilon}(\Omega\cap Q_{1/2})}\leq \delta\left[ u\right]_{C_p^{s-\varepsilon}(\R^n\times (-1,1))} + C\left(||fd^s||_{L^\infty(\Omega\cap Q_1)} + ||u||_{L^\infty(\R^n\times (-1,1))}\right).$$
	The constant $C$ depends only on $n,s,\varepsilon,\delta,G_0$, and ellipticity constants.
\end{lemma}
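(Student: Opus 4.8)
The plan is to argue by contradiction and compactness, in the spirit of \cite{FR17,RV18}, exploiting the fact that a $C^1_p$ domain flattens to a half-space under rescaling. Suppose the conclusion fails: then for some $\delta_0>0$ there are sequences of domains $\Omega^k$ (each $C^1_p$ in $Q_1$ with the same modulus bound $G_0$), operators $L_k$ of the form \eqref{eq:operatorForm} with ellipticity constants in $[\lambda,\Lambda]$, right-hand sides $f_k$, and solutions $u_k\in C_p^{s-\varepsilon}(\R^n\times(-1,1))$ satisfying the equation, such that
$$\left[u_k\right]_{C_p^{s-\varepsilon}(\Omega^k\cap Q_{1/2})} > \delta_0\left[u_k\right]_{C_p^{s-\varepsilon}(\R^n\times(-1,1))} + k\left(\|f_kd^s\|_{L^\infty(\Omega^k\cap Q_1)} + \|u_k\|_{L^\infty(\R^n\times(-1,1))}\right).$$
In particular the left-hand side is positive, so there exist points $(x_k,t_k),(y_k,t_k')\in\Omega^k\cap Q_{1/2}$ (or a purely spatial/temporal pair, split according to which part of the $C^{s-\varepsilon}_p$ seminorm dominates — treat the space difference first, the time difference analogously) realizing, up to a factor $2$, the supremum defining the left-hand seminorm. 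Write $\rho_k = |x_k-y_k|$ (resp. $|t_k-t_k'|^{1/2s}$); the standard observation is that $\rho_k\to 0$, since otherwise the seminorm on the left would be controlled by $\|u_k\|_{L^\infty}$, contradicting the $k$-growth on the right once $k$ is large.

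Next I would introduce the blow-up sequence. Normalize by dividing: set
$$v_k(x,t) = \frac{u_k(x_k+\rho_k x,\; t_k+\rho_k^{2s}t)}{\rho_k^{s-\varepsilon}\,\Theta_k},\qquad \Theta_k := \left[u_k\right]_{C_p^{s-\varepsilon}(\R^n\times(-1,1))} + \|f_kd^s\|_{L^\infty} + \|u_k\|_{L^\infty},$$
so that $v_k$ has $C^{s-\varepsilon}_p$ seminorm (over the appropriate rescaled domain) bounded by a fixed constant, is normalized to have an $O(1)$ increment between two points at unit parabolic distance, vanishes in $(\Omega^k)^c$ rescaled, and solves $(\partial_t+L_k)v_k = \tilde f_k$ with $\tilde f_k$ having the rescaled right-hand side; crucially, by the contradiction hypothesis the rescaled forcing term satisfies $\|\tilde f_k \tilde d^s\|_{L^\infty}\le \rho_k^{\varepsilon}/k \to 0$ and the rescaled global seminorm and $L^\infty$ norm are also $\le 1/k \to 0$ relative to $\Theta_k$ in the relevant regions. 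One then needs to subtract the value of $v_k$ at a fixed base point (say the image of $(x_k,t_k)$) to prevent the sequence from running off to infinity, using that the $C^{s-\varepsilon}_p$ seminorm control gives equicontinuity with a universal modulus on every compact set, together with the translation-invariant structure of the seminorm. Passing to a subsequence, $v_k\to v_\infty$ locally uniformly in $\R^{n+1}$ (or in the half-space $\{x_n>0\}\times\R$, if $(x_k,t_k)$ stays near $\partial\Omega^k$ — the two cases, interior blow-up and boundary blow-up, must be separated by whether $\mathrm{dist}(x_k,\partial\Omega^k_{t_k})/\rho_k$ is bounded or not), and the kernels $K_k$ converge to some limit kernel $K_\infty$ of the same class. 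The domains $\Omega^k$ rescaled converge to either all of $\R^{n+1}$ or to a half-space, precisely because $\Omega^k$ is $C^1_p$ with uniform bound $G_0$ and we are zooming in at a scale $\rho_k\to 0$.

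The limit $v_\infty$ then satisfies: it is in $C^{s-\varepsilon}_p$ with a finite seminorm; it realizes its own $C^{s-\varepsilon}_p$ seminorm between two points at parabolic distance $1$, so it is nonconstant; it solves $(\partial_t+L_\infty)v_\infty = 0$ in the limit domain with $v_\infty=0$ outside it (in the boundary case); and by construction $v_\infty$ has sublinear — in fact, at most $(s-\varepsilon)$-order, hence strictly sublinear-in-space after accounting for $s>1/2$ — growth at infinity, controlled by $|(x,t)|_p^{s-\varepsilon}$. A Liouville-type theorem then forces $v_\infty$ to be constant (in the interior case, any entire $C^{s-\varepsilon}_p$ caloric-type function with that growth is constant; in the half-space case the only such solution vanishing on $\{x_n=0\}\times\R$ with growth below $s$ is $0$, since the fundamental solution $c\,(x_n)_+^s$ already grows like $x_n^s$). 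Either way $v_\infty$ is constant, contradicting that its $C^{s-\varepsilon}_p$ seminorm equals (up to the factor $2$) $1>0$. This contradiction proves the lemma.

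The main obstacle is the Liouville step — establishing that entire (or half-space) solutions of the nonlocal parabolic equation with strictly subcritical parabolic growth must be trivial, uniformly over the admissible class of homogeneous kernels and over the two geometric regimes. This requires care: one needs the global time-regularity input (cf.\ \cite[Lemma 7.1]{FR17} and the remark above) to even make sense of interior estimates for the nonlocal operator, and one must verify the tail of the operator applied to $v_\infty$ converges properly — i.e.\ that no mass is lost at infinity in the blow-up — which is exactly where the $R^{\varepsilon-2s}$-type decay/growth bookkeeping and the hypothesis $s>\frac12$ (so that $s-\varepsilon<2s-1$, keeping the growth subcritical relative to the operator's order after differentiating) are used. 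A secondary technical point is handling the time-increment case of the seminorm and ensuring the rescaling $t\mapsto \rho_k^{2s}t$ is the correct parabolic scaling that keeps $(\partial_t+L_k)$ invariant; this is routine but must be done consistently with the definition of $[\cdot]_{C^{s-\varepsilon}_p}$ and $[\cdot]_{C^{s-\varepsilon}_t}$.
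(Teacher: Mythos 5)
Your proposal is correct and follows essentially the same route as the paper: normalize, argue by contradiction, choose points nearly realizing the seminorm, observe $\rho_k\to 0$, split into the interior/boundary dichotomy according to $d_x(x_k,t_k)/\rho_k$, blow up at parabolic scale $\rho_k$, pass to a limit using the compactness/stability results from \cite{FR17}, and conclude with the Liouville theorem \cite[Theorem 2.1]{FR17}. The only cosmetic differences are that the paper first normalizes so that $\|fd^s\|_{L^\infty}+\|u\|_{L^\infty}\le 1$ and then divides by $\left[u_k\right]_{C^{s-\varepsilon}_p(\R^n\times(-1,1))}$ alone (which is comparable to your $\Theta_k$ under the contradiction hypothesis), handles the space and time increments simultaneously via $\rho_k=|x_k-y_k|+|t_k-s_k|^{1/2s}$ rather than splitting into cases, and in the boundary case recenters at the boundary projection $(z_k,t_k)$ so that no base-point subtraction is needed; your side remark that $s>\tfrac12$ enters via ``$s-\varepsilon<2s-1$'' is not accurate (that inequality requires $\varepsilon>1-s$, which is not assumed), but this is commentary and does not affect the argument.
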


\begin{proof}
	We can assume that $||fd^s||_{L^\infty(\Omega\cap Q_1)} + ||u||_{L^\infty(\R^n\times (-1,1))}\leq 1$, since otherwise we would divide the equation with a suitable constant. We argue by contradiction. Assume there exists $\delta>0$, so that for every $k\in\N$ there exist domains $\Omega_k$ that are $C^1_p$ in $Q_1$, $u_k,f_k$ and $L_k$ operators satisfying \eqref{eq:operatorForm} so that the suitable equation holds, but 
	\begin{equation}\label{CA}
	\left[ u_k\right]_{C_p^{s-\varepsilon}(\Omega_k\cap Q_{1/2})}> \delta\left[ u_k\right]_{C_p^{s-\varepsilon}(\R^n\times (-1,1))} + k.
	\end{equation}
	Pick $(x_k,t_k),(y_k,s_k)\in \Omega_k\cap Q_{1/2}$ so that
	$$\frac{1}{2}\left[ u_k\right]_{C_p^{s-\varepsilon}(\Omega_k\cap Q_{1/2})}\leq \frac{|u_k(x_k,t_k)-u_k(y_k,s_k)|}{|x_k-y_k|^{s-\varepsilon} + |t_k-s_k|^{\frac{s-\varepsilon}{2s}}}.$$
	We define $\rho_k:= |x_k-y_k| + |t_k-s_k|^{\frac{1}{2s}}$.
	Using \eqref{CA} we see that 
	$$\rho_k^{s-\varepsilon}\left[ u_k\right]_{C_p^{s-\varepsilon}(\Omega_k\cap Q_{1/2})}\leq C |u(x_k,t_k)-u(y_k,s_k)|\leq 2C\leq \frac{C}{k}\left[ u_k\right]_{C_p^{s-\varepsilon}(\Omega_k\cap Q_{1/2})},$$
	which yields that $\rho_k\to0$ as $k\to \infty.$
	
	Now we have the following dichotomy
	\begin{align*}
	&\text{Case }1) \limsup_{k\to\infty} \frac{d_x(x_k,t_k)}{\rho_k} = \infty,\\
	&\text{Case }2) \limsup_{k\to\infty} \frac{d_x(x_k,t_k)}{\rho_k} =:\gamma < \infty.
	\end{align*}
	Let us first treat the case $1)$. We define 
	$$v_k(x,t) = \frac{1}{\left[ u_k\right]_{C_p^{s-\varepsilon}(\R^n\times (-1,1))}\rho_k^{s-\varepsilon}}\left(u_k(x_k+\rho_kx,t_k+\rho_k^{2s}t)-u_k(x_k,t_k)\right).$$ 
	We have $v_k(0,0) = 0$, $\left[ v_k\right]_{C^{s-\varepsilon}(A)}\leq 1$ for any compact set $A\subset R^{n+1}$, provided that $k$ is big enough, and moreover 
	\begin{align*}
	||v_k||_{L^\infty(Q_1)} &\geq |v_k(\rho_k^{-1}(y_k-x_k), \rho_k^{-2s}(s_k-t_k))|\\
	&=\frac{1}{\left[ u_k\right]_{C_p^{s-\varepsilon}(\R^n\times (-1,1))}\rho_k^{s-\varepsilon}} |u_k(y_k,s_k)-u_k(x_k,t_k)|\\
	&\geq \frac{1}{C}\frac{\left[ u_k\right]_{C_p^{s-\varepsilon}(\Omega\cap Q_{1/2})}}{\left[ u_k\right]_{C_p^{s-\varepsilon}(\R^n\times (-1,1))}} \geq \frac{\delta}{C}
	\end{align*}
	where in the last step we used \eqref{CA}. 
	Moreover $v_k$ satisfy
	\begin{equation}\label{rescaledEquation}
	(\partial_t + L_{k})v_k = \frac{\rho_k^{s+\varepsilon}}{\left[ u_k\right]_{C_p^{s-\varepsilon}(\R^n\times (-1,1))}}f_{k,\rho_k}  \quad \text{in }U_k\cap Q_{1/\rho_k}, 
	\end{equation}
	where $U_k=\{(x,t);\text{ }(x_k+\rho_kx,t_k+\rho_k^{2s}t)\in \Omega_k \}$ and  $f_{k,\rho_k}(x,t) = f_k(x_k+\rho_kx,t_k+\rho_k^{2s}t)$.
	Note that $U_k$ converge to $\R^{n+1}$, while
	$$ \left|\frac{\rho_k^{s+\varepsilon}}{\left[ u_k\right]_{C_p^{s-\varepsilon}(\R^n\times (-1,1))}}f_{k,\rho_k}\right|\leq \frac{\rho_k^{s+\varepsilon}  d_{k,\rho_k}^{-s}}{k}\longrightarrow 0,$$
	locally uniformly in $R^{n+1} $, thanks to \eqref{CA}. Moreover, $L_{k}$ converge (up to a subsequence) to an operator $L_0$ satisfying \eqref{eq:operatorForm}, thanks to equi-continuity and uniform boundedness on $\S^{n-1}$.
	
	Passing to subsequence we get that $v_k$ converge to a function $v$ locally uniformly in $\R^{n+1}$. The function $v$ satisfies $v(0) = 0$, $\left[ v\right]_{C^{s-\varepsilon}(\R^{n+1})}\leq 1$, which implies 
	$$||v||_{L^\infty(Q_R)}\leq R^s,\quad\quad R>1.$$
	Thanks to \cite[Lemma 3.1]{FR17}, the function $v$ solves\footnote{Since the kernels $K_k$ of operators $L_k$  are uniformly bounded, a subsequence converges weekly to some measure satisfying the same ellipticity conditions. For more details see \cite[Lemma 3.1]{FR17}.}
	$$(\partial_t + L_0)v=0\quad \text{in }\R^{n+1},$$
	and hence by \cite[Theorem 2.1]{FR17} $v$ is a constant function.
	Hence $v(0,0)=0$ and $||v||_{L^\infty(Q_1)}>0$ contradict each other.
	
	In the case $2)$ we proceed similarly. We choose $(z_k,t_k)\in\partial\Omega\cap Q_{1/2}$ so that $d_x(x_k,t_k)=|z_k-x_k|.$ Then we define
	$$v_k(x,t) = \frac{1}{\left[ u_k\right]_{C_p^{s-\varepsilon}(\R^n\times (-1,1))}\rho_k^{s-\varepsilon}} u_k(z_k+\rho_kx,t_k+\rho_k^{2s}t).$$
	We have $v_k(0,0) = 0,$ $\left[ v_k\right]_{C_p^{s-\varepsilon}(K)} \leq 1$ for every compact set $K\subset\R^{n+1}$ and $k$ big enough, but moreover
	
	$$  |v_k(\xi_k,0) - v_k(\eta_k,\tau_k)|  = \frac{|u_k(x_k,t_k)-u_k(y_k,s_k)|}{\left[ u_k\right]_{C_p^{s-\varepsilon}(\R^n\times (-1,1))} \rho_k^{s-\varepsilon}} \geq \frac{1}{C}\frac{\left[ u_k\right]_{C_p^{s-\varepsilon}(\Omega_k\cap Q_{1/2})}}{\left[ u_k\right]_{C_p^{s-\varepsilon}(\R^n\times (-1,1))}}\geq c_0\delta>0,$$
	thanks to \eqref{CA}, where $\xi_k = \rho_k^{-1}(x_k-z_k)$, $(\eta_k,\tau_k) = (\rho_k^{-1}(y_k-z_k), \rho_k^{-2s}(s_k-t_k))$. All points $(\xi_k,0),(\eta_k,\tau_k)$ are contained in $Q_{\gamma+1}.$
	
	In this case $v_k$ solves 
	$$\left\lbrace
	\begin{array}{rcll}
	(\partial_t  + L_{k})v_k &=& \frac{\rho_k^{s+\varepsilon}}{\left[ u_k\right]_{C_p^{s-\varepsilon}(\R^n\times (-1,1))}}f_{k,\rho_k}  &\text{in }U_k\cap Q_{1/\rho_k} \\
	v_k&=&0&\text{in }U_k^c\cap Q_{1/\rho_k}.
	\end{array}\right.
	$$
	Due to \eqref{CA}, it holds $ \left|\frac{\rho_k^{s+\varepsilon}}{\left[ u_k\right]_{C_p^{s-\varepsilon}(\R^n\times (-1,1))}}f_{k,\rho_k}\right|\leq \frac{\rho_k^{s+\varepsilon}d^{-s}_{k,\rho_k}}{k}.$ 
	Choosing a suitable subsequence $\{k_l\}_{l\in\N}$, so that $v:=\lim\limits_{l\to\infty}v_{k_l},$ $\xi = \lim\limits_{l\to\infty} \xi_{k_l},$ $(\eta,\tau)=\lim\limits_{l\to\infty} (\eta_{k_l},\tau_{k_l})$ exist, we deduce from \cite[Lemma 3.1]{FR17} that
	$$\left\lbrace
	\begin{array}{rcll}
	(\partial_t  + L_{0})v &=& 0  &\text{in }\{x_n>0\} \\
	v&=&0&\text{in }\{x_n\leq 0\},
	\end{array}\right.
	$$
	for some homogeneous operator $L_0$ satisfying \eqref{eq:operatorForm}.
	Moreover $v$ inherits the following properties: $v(0,0)=0,$ $\left[ v\right]_{C_p^{s-\varepsilon}(\R^{n+1})} \leq 1$ and $|v(\xi,0)-v(\eta,\tau)| \geq  c_0\delta>0.$ It follows from \cite[Theorem 2.1]{FR17}, that $v$ is a constant function, and hence $v=0$. But this contradicts the fact that $v$ has different values at $(\xi,0)$ and $(\eta,\tau)$.
\end{proof}

	We show next how to conclude the a priori boundary regularity estimate using this lemma.

\begin{proof}[Proof of Proposition \ref{prop:holderBoundaryEstimate}]
	We choose a smooth cut off function $\eta\in C^{\infty}_c(Q_1)$, so that $\eta\equiv 1 $ in $Q_{3/4}$. We apply Lemma \ref{lem:fakeBoundaryReg} on the truncated function $\eta u$ to obtain that for every $\delta>0$ there is $C>0$ so that
	$$\left[ u\right]_{C_p^{\gamma}(Q_{1/4})}  \leq \delta \left[ \eta u\right]_{C_p^{\gamma}(\R^n\times (-1,1))} + C\left(||gd^{s}||_{L^\infty(\Omega\cap Q_{1/2})} + ||\eta u||_{L^\infty(\R^n\times(1-,1))} \right),$$
	where we denoted $g=(\partial_t  + L)(\eta u)$. Since $\eta\equiv 1$ in $Q_{1/2}$ we have $g=f- Lu(1-\eta)$ in $\Omega\cap Q_{1/2}.$ We can furthermore estimate 
	$$\left[ \eta u\right]_{C_p^{\gamma}(\R^n\times (-1,1))}\leq \left[  u\right]_{C_p^{\gamma}(Q_1)} + \left[ (1-\eta) u\right]_{C_p^{\gamma}(Q_1)}\leq C \left[  u\right]_{C_p^{\gamma}(Q_1)}, $$
	and 
	\begin{align*}
	\left|L(u(1-\eta))(x,t)\right| &\leq \int_{B_{3/4}^c} \left|u(y,t)(1-\eta)(y,t)K(x,y,t)\right| dy \\
	&\leq \Lambda \sup_{R>1}R^{\varepsilon-2s}||u||_{L^\infty(B_R\times (-1,1))}\int_{B_{3/4}^c}|y|^{2s-\varepsilon}|x-y|^{-n-2s}dy
	\\&\leq C\sup_{R>1}R^{\varepsilon-2s}||u||_{L^\infty(B_R\times (-1,1))}.
	\end{align*}
	This gives 
	$$\left[ u\right]_{C_p^{\gamma}(Q_{1/4})}  \leq \delta \left[ u\right]_{C_p^{\gamma}(Q_1)} + C\left(||fd^{s}||_{L^\infty(\Omega\cap Q_{1})} + \sup_{R>1}R^{\varepsilon-2s}|| u||_{L^\infty(B_R\times(1-,1))} \right),$$
	which by \cite[Lemma 2.23]{FR20} and the covering argument proves the result.
\end{proof}

	In the following result we establish a version of the estimate that is used later on.

	\begin{corollary}\label{cor:advancedBoundaryReg}
		Let $s\in(\frac{1}{2},1)$. Let $\Omega\subset \R^{n+1}$ be $C^{1}_p$ in $Q_1$. Let $\gamma\in(0,s)$ and $\varepsilon>0$, $\gamma_2\in(0,1)$. Let $L$ be an operator of the form \eqref{eq:operatorForm}. Assume $u\in C_p^{\gamma}(Q_1)$ is a solution of
		$$\left\lbrace\begin{array}{rcll}
		(\partial_t  + L)u &=& f_1+f_2& \text{in }\Omega\cap Q_1\\
		u&=&0&\text{in }\Omega^c\cap Q_1,
		\end{array}\right. $$
		with $f_1d^{s}\in L^\infty(\Omega\cap Q_1)$,  and $f_2\in C^{\gamma_2}_p(\Omega\cap Q_1).$
		Then 
		\begin{align*}
			\left[ u\right]_{C_p^{\gamma}( Q_{1/2})}\leq & C\left(||f_1d^{s}||_{L^\infty(\Omega\cap Q_1)} + \left[ f_2\right]_{C^{\gamma_2}_p} + \sup_{R>1}R^{-2s+\varepsilon}||u||_{L^\infty(B_R\times(-1,1))}\right).
		\end{align*}
		The constant $C$ depends only on $n,s,\gamma,\varepsilon,G_0$,  and ellipticity constants.
	\end{corollary}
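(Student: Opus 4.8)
The plan is to reduce Corollary \ref{cor:advancedBoundaryReg} to Proposition \ref{prop:holderBoundaryEstimate} by absorbing the more regular source term $f_2$ into the solution. The key observation is that $f_2 \in C^{\gamma_2}_p(\Omega \cap Q_1)$ with $\gamma_2 \in (0,1)$ and $\gamma < s < 2s$, so the Cauchy problem $(\partial_t + L)\Phi = f_2$ has a solution $\Phi$ which, by interior regularity estimates for the nonlocal heat equation, is at least $C^{\gamma_2 + 2s}_p$ — in particular $C^{\gamma}_p$ with a seminorm controlled by $[f_2]_{C^{\gamma_2}_p}$. More precisely, first I would extend $f_2$ (times a cutoff) to a compactly supported function on $\R^{n+1}$ with comparable $C^{\gamma_2}_p$ seminorm, solve $(\partial_t + L)\Phi = f_2 \eta$ globally (or invoke the relevant interior Schauder-type estimate from \cite{FR17} directly on a slightly smaller cylinder), and obtain $\|\Phi\|_{L^\infty} + [\Phi]_{C^\gamma_p(Q_{3/4})} \le C[f_2]_{C^{\gamma_2}_p}$.

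Then I would set $v = u - \Phi$ on $\Omega \cap Q_{3/4}$. This $v$ solves $(\partial_t + L)v = f_1 + \bigl(f_2 - (\partial_t + L)\Phi\bigr) = f_1$ in $\Omega \cap Q_{3/4}$, but the exterior condition is now $v = -\Phi$ in $\Omega^c \cap Q_{3/4}$ rather than $v = 0$. To fix this I would instead work with $v = u - \eta_2 \Phi$ where $\eta_2$ is a cutoff supported in $Q_{3/4}$, equal to $1$ on $Q_{1/2}$; but the cleanest route is to note that Proposition \ref{prop:holderBoundaryEstimate} (or rather the Lemma \ref{lem:fakeBoundaryReg} underlying it) is proven by a compactness/blow-up argument that is robust to such perturbations, so alternatively I would re-run that contradiction argument allowing an additional right-hand side term bounded in $C^{\gamma_2}_p$: after rescaling by $\rho_k^{s+\varepsilon}/[u_k]$, the extra term picks up a factor $\rho_k^{s-\gamma_2+\varepsilon}$ (since $f_2$ is Hölder of order $\gamma_2$, its rescaled version has seminorm $\rho_k^{\gamma_2}$, and one gains $\rho_k^{2s}$ from the equation, losing $\rho_k^{s-\varepsilon}$ from normalization, net $\rho_k^{s+\gamma_2-\varepsilon} \to 0$ wait — one must check the bookkeeping), which still vanishes as $\rho_k \to 0$, so the blow-up limit still solves the homogeneous equation and Liouville forces it to be constant, yielding the same contradiction.

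Concretely, the subtraction approach avoids reopening the contradiction argument, so I would carry it out as follows. Fix a cutoff $\eta_2 \in C^\infty_c(Q_{3/4})$ with $\eta_2 \equiv 1$ on $Q_{5/8}$, and write $u = (u - \eta_2\Phi) + \eta_2\Phi$. Set $w := u - \eta_2\Phi$. On $\Omega \cap Q_{5/8}$ we have $(\partial_t + L)w = f_1 + f_2 - (\partial_t + L)(\eta_2\Phi) = f_1 - \bigl[(\partial_t+L)(\eta_2\Phi) - f_2\bigr]$, and since $(\partial_t+L)\Phi = f_2$, the bracketed term equals $(\partial_t+L)(\eta_2\Phi) - \eta_2(\partial_t+L)\Phi = $ a commutator term that involves only values of $\Phi$ and is bounded in $L^\infty(\Omega\cap Q_{5/8})$ (indeed smoothly, since it is a nonlocal operator applied after a smooth truncation, evaluated away from where the truncation is nonconstant relative to the far region — this is exactly the same tail estimate as in the proof of Proposition \ref{prop:holderBoundaryEstimate}), with bound $\le C\|\Phi\|_{L^\infty} \le C[f_2]_{C^{\gamma_2}_p}$. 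Also $w = -\eta_2\Phi$ outside $\Omega$, but $\eta_2\Phi$ is globally $C^\gamma_p$, so I apply Proposition \ref{prop:holderBoundaryEstimate} to $w$ after one further truncation (exactly as in its own proof, where the exterior data is handled by the $\sup_{R>1}R^{\varepsilon-2s}\|\cdot\|_{L^\infty}$ term — and $\|\eta_2\Phi\|_{L^\infty(B_R\times(-1,1))}$ is bounded uniformly in $R$). This gives $[w]_{C^\gamma_p(Q_{1/2})} \le C(\|f_1 d^s\|_{L^\infty} + [f_2]_{C^{\gamma_2}_p} + \sup_{R>1}R^{\varepsilon-2s}\|u\|_{L^\infty(B_R\times(-1,1))})$, and since $[\eta_2\Phi]_{C^\gamma_p} \le C[f_2]_{C^{\gamma_2}_p}$, the triangle inequality $[u]_{C^\gamma_p(Q_{1/2})} \le [w]_{C^\gamma_p(Q_{1/2})} + [\eta_2\Phi]_{C^\gamma_p(Q_{1/2})}$ closes the estimate.

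The main obstacle I anticipate is getting the interior regularity estimate for $\Phi$ to land in the right space with the right norm: one needs that $(\partial_t + L)\Phi = f_2$ with $f_2 \in C^{\gamma_2}_p$ implies $\Phi \in C^{\gamma}_p$ (enough, since $\gamma < s < \gamma_2 + 2s$) with control by $[f_2]_{C^{\gamma_2}_p}$ alone — this requires a bit of care because Schauder estimates for the nonlocal parabolic equation need global-in-time information (cf. Remark \ref{rem:1} and \cite[Lemma 7.1]{FR17}), so I would either build $\Phi$ by convolving a truncated, globally-defined extension of $f_2$ against the fundamental solution of $\partial_t + L_0$ and then correct, or cite the appropriate interior estimate from \cite{FR17} on a cylinder $Q_{7/8}$ with the observation that the $L^\infty$-in-far-field contribution is harmless. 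A secondary technical point is verifying that the commutator $(\partial_t+L)(\eta_2\Phi) - \eta_2 f_2$ is genuinely bounded (and not merely in some negative space); but since $L$ is an integral operator and $\eta_2\Phi$ differs from $\Phi$ only by a smooth factor, this is the same tail computation already performed in the proof of Proposition \ref{prop:holderBoundaryEstimate}, just with $\Phi$ in place of $u$, and it poses no new difficulty.
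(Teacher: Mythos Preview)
Your subtraction approach has a genuine gap at the step where you claim
\[
\|\Phi\|_{L^\infty} + [\Phi]_{C^\gamma_p(Q_{3/4})} \le C[f_2]_{C^{\gamma_2}_p}.
\]
This is simply false: if $f_2$ is a large constant then $[f_2]_{C^{\gamma_2}_p}=0$, yet any $\Phi$ solving $(\partial_t+L)\Phi=f_2$ on a unit cylinder will have $\|\Phi\|_{L^\infty}$ comparable to that constant. Schauder estimates control $\Phi$ by the \emph{full} norm $\|f_2\|_{C^{\gamma_2}_p}$, not the seminorm. Subtracting $f_2(0,0)$ first only shifts the problem: you would then have $(\partial_t+L)(u-\tilde\Phi)=f_1+f_2(0,0)$, and applying Proposition~\ref{prop:holderBoundaryEstimate} leaves $|f_2(0,0)|$ unabsorbed on the right-hand side.

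The entire content of the corollary is precisely the absorption of $|f_2(0,0)|$ into the $\|u\|_{L^\infty}$ term, and this requires a comparison-principle argument, not a subtraction. The paper's proof is much more direct: solve $(\partial_t+L)v=f_2(0,0)$ in $\Omega\cap Q_1$ with $v=0$ outside, so that a barrier gives $|f_2(0,0)|\le C\|v\|_{L^\infty}$; then by comparison $\|u-v\|_{L^\infty}\le C(\|f_1 d^s\|_{L^\infty}+[f_2]_{C^{\gamma_2}_p}+\|u\|_{L^\infty(Q_1^c)})$, whence $|f_2(0,0)|\le C\|v\|\le C(\|u\|+\|u-v\|)$ is controlled by the admissible quantities. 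One then applies Proposition~\ref{prop:holderBoundaryEstimate} directly to $u$ with $f=f_1+f_2$, using $\|(f_1+f_2)d^s\|_{L^\infty}\le\|f_1 d^s\|_{L^\infty}+\|f_2\|_{L^\infty}$ and the bound just obtained. No auxiliary interior solution $\Phi$, no commutators, and no reopening of the blow-up argument are needed.
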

	\begin{proof}
		To pass from $L^\infty$ norm to the H\"older seminorm in the right hand side, we  find function $v$ solving
		$$\left\lbrace\begin{array}{rcll}
		(\partial_t  + L)v &=& f_2(0,0)& \text{in }\Omega\cap Q_1\\
		v&=&0&\text{in }(\Omega\cap Q_1)^c.
		\end{array}\right. $$
		Then we have $|f_2(0,0)|\leq  C||v||_{L^\infty(\Omega\cap Q_1)}$ and by comparison principle 
		\begin{align*}
			||u-v||_{L^\infty (\Omega\cap Q_1)}&\leq C(||f_1d^s|| + ||f_2-f_2(0,0)||+||u||_{L^\infty(Q_1^c)})\\
			&\leq C(||f_1d^s|| +  \left[ f_2\right]_{C^{\gamma_2}_p} + ||u||_{L^\infty(Q_1^c)}) .
		\end{align*} 
		Furthermore it holds
		\begin{align*}
			||(f_1+f_2)d^s||_{L^\infty(Q_1\cap\Omega)}&\leq ||f_1d^s||_{L^\infty} + ||f_2-f_2(0,0)|| + |f_2(0,0)|\\
			&\leq  ||f_1d^s||_{L^\infty} + \left[ f_2\right]_{C^{\gamma_2}_p} + C||v||_{L^\infty(Q_1)}\\
			 & \leq C\left( ||f_1d^s||_{L^\infty} + \left[ f_2\right]_{C^{\gamma_2}_p} + ||u||_{L^\infty(Q_1)} + ||u-v||_{L^\infty(Q_1)}   \right)\\
			 &\leq C\left( ||f_1d^s||_{L^\infty} + \left[ f_2\right]_{C^{\gamma_2}_p}+ ||u||_{L^\infty(R^n\times(-1,1))}\right),
		\end{align*}
		and hence by Proposition \ref{prop:holderBoundaryEstimate} we conclude
		\begin{align*}
		\left[ u\right]_{C_p^{\gamma}( Q_{1/2})}\leq & C\left(||f_1d^{s}||_{L^\infty(\Omega\cap Q_1)} + \left[ f_2\right]_{C^{\gamma_2}_p}+ ||u||_{L^\infty(R^n\times(-1,1))} \right).
		\end{align*}
		
		It remains to replace the $L^\infty$ norm of $u$ at infinity with the term as in the statement, which is done with the same cut-off procedure as in the proof of Proposition \ref{prop:holderBoundaryEstimate}.
	\end{proof}

	\subsection{Boundary Harnack in $C^1_p$ domains}
	
	We now turn our attention to the derivation the boundary Harnack inequalities. To prove that the quotient of two solutions is regular of some order $\alpha$ up to the boundary, it is heuristically enough to prove that the solution in the numerator can be approximated by the solution from the denominator multiplied with a polynomial of order $\alpha$ up to order $\alpha + s$. The regularity of the quotient can then be deduced from interior regularity estimates. We start with establishing such expansions of order $2s$. 
	
	\begin{proposition}\label{boundaryHarnackExpansion}
		Let $s\in(\frac{1}{2},1)$. Let $\Omega\subset \R^{n+1}$ be $C^{1}_p$ in $Q_1$. Let $\varepsilon>0$ and $\rho \in\left(0,s \right) $. Let $L$ be an operator of the form \eqref{eq:operatorForm}. Assume $u_i\in C_p^{\gamma}(Q_1)$, $i\in\{1,2\}$, $\gamma>0$, solve
		$$\left\lbrace\begin{array}{rcll}
		(\partial_t +  L)u_i &=& f_i& \text{in }\Omega\cap Q_1\\
		u_i&=&0&\text{in }\Omega^c\cap Q_1,
		\end{array}\right. $$
		with  $||d^\rho f_i||_{L^\infty(\Omega\cap Q_1)}\leq 1$, $||u_i||_{L^\infty(B_R\times(-1,1))}\leq R^{2s-\rho }$ for $R>1$. Assume also that $u_2\geq c_0 d^s$, for some $c_0>0$.
		
		Then for every $(z,t_0)\in \partial \Omega \cap Q_{1/2}$ there exists a constant $q_{(z,t_0)}$, so that 
		$$|u_1(x,t)-q_{(z,t_0)}u_2(x,t)|\leq C \left(|x-z|+|t-t_0|^{\frac{1}{2s}}\right)^{2s-\rho }.$$
		The constant $C$ depends only on $n,s,\rho  ,\varepsilon,c_0,G_0$, and ellipticity constants.
		
		Moreover for every $(x_0,t_0)\in \Omega\cap Q_{1/2}$, such that $d_x(x_0) = |x_0-z| = c_\Omega r,$ we have 
		\begin{equation*}
		\left[ u_1 - q_{(z,t_0)} u_2 \right] _{C^{2s-\rho }_p(Q_r(x_0,t_0))}\leq C.
		\end{equation*}
	\end{proposition}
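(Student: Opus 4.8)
The plan is a contradiction--compactness argument in the spirit of \cite{FR17,RV18}, in which the constant $q_{(z,t_0)}$ is produced as a limit of optimal approximating constants across dyadic scales. Fix $(z,t_0)\in\partial\Omega\cap Q_{1/2}$, which after translating we take to be the origin, and for $r\in(0,\tfrac12)$ put
$$\Phi(r)=\inf_{q\in\R}\|u_1-qu_2\|_{L^\infty(Q_r)},\qquad \Theta(r)=\sup_{r\le r'<1/2}\frac{\Phi(r')}{(r')^{2s-\rho}},$$
and let $q_r$ realise the infimum defining $\Phi(r)$. The heart of the matter is the uniform bound $\Theta(r)\le C$. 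The key preliminary observation, and the place where the assumptions $\rho<s$ and $u_2\ge c_0d^s$ really enter, is the \emph{transmission estimate} $|q_{2r}-q_r|\le C\,\Theta(2r)\,r^{s-\rho}$: evaluating at a point of $Q_r$ at distance $\sim r$ from $\partial\Omega$ (such a point exists because the $C^1_p$ boundary passes near the origin), one has $u_2\ge c_0d^s\gtrsim r^s$ there and $|(q_{2r}-q_r)u_2|\le\Phi(2r)+\Phi(r)\lesssim\Theta(2r)r^{2s-\rho}$, while $\|u_2\|_{L^\infty(Q_{2r})}\lesssim r^{s}$ by the $C^s_p$ boundary growth of solutions (Proposition~\ref{prop:holderBoundaryEstimate}, up to an arbitrarily small, harmless loss in the exponent). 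Since $s-\rho>0$, summing a geometric series shows $(q_{2^{-j}})_j$ is Cauchy, so $q_{(0,0)}:=\lim_jq_{2^{-j}}$ exists with $|q_{(0,0)}|\le C$; once $\Theta\le C$ is known, $|q_{(0,0)}-q_r|\lesssim\Theta(r)r^{s-\rho}$ and the definition of $\Phi$ give the claimed pointwise expansion.

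\textbf{Blow-up.} Suppose the bound $\Theta\le C$ fails uniformly over admissible data. Then there are domains $\Omega_k$ that are $C^1_p$ in $Q_1$, operators $L_k$ as in \eqref{eq:operatorForm}, and $u_{i,k},f_{i,k}$ obeying the hypotheses, together with radii for which $\Phi_k(\rho_k)\rho_k^{-(2s-\rho)}\to\infty$; since $\Phi_k(\rho_k)\le\|u_{1,k}\|_{L^\infty(Q_{\rho_k})}\le1$ we have $\rho_k\to0$. Using the monotonicity of $\Theta_k$ one may choose $\rho_k$ so that in addition $\Phi_k(\rho_k)\ge\tfrac12\Theta_k(\rho_k)\rho_k^{2s-\rho}$ and $\Phi_k(R\rho_k)\le\Theta_k(\rho_k)(R\rho_k)^{2s-\rho}$ for $1\le R<\tfrac1{2\rho_k}$. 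Define
$$v_k(x,t)=\frac{u_{1,k}(\rho_kx,\rho_k^{2s}t)-q_{k,\rho_k}u_{2,k}(\rho_kx,\rho_k^{2s}t)}{\rho_k^{2s-\rho}\Theta_k(\rho_k)},\qquad \bar w_k(x,t)=\frac{u_{2,k}(\rho_kx,\rho_k^{2s}t)}{\rho_k^{s}}.$$
Then $\|v_k\|_{L^\infty(Q_1)}\ge\tfrac12$ by the choice of $\rho_k$ and minimality of $q_{k,\rho_k}$; the transmission estimate summed over dyadic scales between $\rho_k$ and $R\rho_k$ gives the uniform growth $\|v_k\|_{L^\infty(Q_R)}\le CR^{2s-\rho}$, $R\ge1$; $\bar w_k$ is locally uniformly bounded and $\bar w_k\ge c_0d^s$; and $v_k$ solves $(\partial_t+L_k)v_k=\tilde f_k$ in $\tilde\Omega_k\cap Q_{1/\rho_k}$ with $v_k=0$ outside, with $\tilde f_k\to0$ in $L^\infty_{\loc}$ — a quantitative consequence of $\|d^\rho f_i\|_{L^\infty}\le1$, $\Theta_k(\rho_k)\to\infty$, and the a priori $C^{s-\varepsilon}_p$ bound, which keeps $q_{k,\rho_k}$ from growing as fast as $\Theta_k(\rho_k)$. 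The rescaled domains $\tilde\Omega_k$ converge to a half-space $\{x_n>0\}$ (this is where $C^1_p$ is used) and, up to a subsequence, $L_k\to L_0$ of the form \eqref{eq:operatorForm}. By Corollary~\ref{cor:advancedBoundaryReg} and interior estimates, $(v_k)$ and $(\bar w_k)$ are precompact in $C^0_{\loc}$; along a subsequence $v_k\to v$ and $\bar w_k\to w_*$ locally uniformly.

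\textbf{Liouville classification and the contradiction.} By \cite[Lemma~3.1]{FR17}, $v$ and $w_*$ solve the homogeneous half-space problem $(\partial_t+L_0)(\cdot)=0$ in $\{x_n>0\}$, $(\cdot)=0$ in $\{x_n\le0\}$; $v$ inherits $\|v\|_{L^\infty(Q_1)}\ge\tfrac12$ and $|v(X)|\le C(1+|X|_p)^{2s-\rho}$, while $w_*\ge c_0(x_n)_+^s$, so $w_*\not\equiv0$. The conceptual heart of the proof is a Liouville-type classification: every solution of this half-space problem growing strictly slower than $|X|_p^{2s}$ is a constant multiple of $(x_n)_+^s$. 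This uses the fact, established in Section~\ref{sec:Lds}, that $(x_n)_+^s$ solves $(\partial_t+L_0)(x_n)_+^s=0$ in $\{x_n>0\}$, together with the observation that the admissible growth exponent $2s-\rho$ lies strictly below both $s+1$ and $2s$ (because $s<1$ and $\rho>0$), which, after a blow-down reducing to the homogeneous case and the time-independence statement \cite[Theorem~2.1]{FR17}, leaves no room for any other solution. Hence $v=c_1(x_n)_+^s$ and $w_*=c_2(x_n)_+^s$ with $c_2\ne0$. Finally, the minimality of $q_{k,\rho_k}$ at scale $\rho_k$ reads, after rescaling, $\|v_k-\beta\bar w_k\|_{L^\infty(Q_1)}\ge\|v_k\|_{L^\infty(Q_1)}$ for all $\beta\in\R$; letting $k\to\infty$ with $\beta=c_1/c_2$ yields $0\ge\tfrac12$, a contradiction. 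Thus $\Theta\le C$ and the pointwise expansion holds. Besides the half-space Liouville theorem, the step I expect to be the main obstacle is the uniform growth control $\|v_k\|_{L^\infty(Q_R)}\le CR^{2s-\rho}$, since it is the only point at which $\rho<s$, the $C^1_p$ geometry and the nondegeneracy $u_2\ge c_0d^s$ must all be combined, through the summation of the transmission estimates.

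\textbf{The interior seminorm bound.} For the second assertion, fix $(x_0,t_0)\in\Omega\cap Q_{1/2}$ with $d_x(x_0)=|x_0-z|=c_\Omega r$, $(z,t_0)\in\partial\Omega$, and set $w=u_1-q_{(z,t_0)}u_2$. Then $w$ solves $(\partial_t+L)w=f_1-q_{(z,t_0)}f_2$ in $Q_r(x_0,t_0)\subset\Omega\cap Q_1$; the pointwise expansion just proved gives $\|w\|_{L^\infty(Q_r(x_0,t_0))}\le Cr^{2s-\rho}$, while $\|d^\rho(f_1-q_{(z,t_0)}f_2)\|_{L^\infty}\le C$ and $d\sim r$ on $Q_r(x_0,t_0)$ give $r^{2s}\|f_1-q_{(z,t_0)}f_2\|_{L^\infty(Q_r(x_0,t_0))}\le Cr^{2s-\rho}$, and the nonlocal tails of $w$ are dominated by $\sup_{R>1}R^{-(2s-\rho)}\|w\|_{L^\infty(B_R\times(-1,1))}\le C$. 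Rescaling $Q_r(x_0,t_0)$ to unit size and invoking the interior Hölder estimate for $(\partial_t+L)$ — which needs the global time regularity of $w$, cf.\ \cite[Lemma~7.1]{FR17} — then gives $[w]_{C^{2s-\rho}_p(Q_r(x_0,t_0))}\le C$.
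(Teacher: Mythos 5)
Your proposal is correct and follows essentially the same contradiction--compactness and blow-up strategy as the paper's proof. The only variation is cosmetic: you choose $q_r$ to minimise $\|u_1-qu_2\|_{L^\infty(Q_r)}$ and derive the final contradiction from this $L^\infty$-minimality, whereas the paper takes the $L^2$-projection $q_{k,r}=\int_{Q_r}u_1u_2/\int_{Q_r}u_2^2$ and uses the resulting orthogonality $\int_{Q_1}v\cdot(x_n)_+^s=0$; the transmission estimate, the growth control for the blow-up, and the appeal to the half-space Liouville theorem \cite[Theorem~4.11]{FR17} are the same.
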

	\begin{proof}
		Thanks to the translation and rescaling invariance of the statement, we only need to prove the case when $(z,t_0)=(0,0)$. We prove the claim by contradiction. Assume that for every $k\in\N$, there exist domains $\Omega_k$ that are $C^{1}_p$ in $Q_1$, $u_{i,k},f_{i,k}$, with $||d^\rho f_i||_{L^\infty(\Omega\cap Q_1)},\sup_{R>1}R^{\varepsilon-2s}||u_i||_{L^\infty(B_R\times(-1,1))}\leq1,$ and $L_k$ operators satisfying \eqref{eq:operatorForm}  so that the suitable equations hold, but for any choice of $q_k$ we have
		$$\sup_{r>0} \frac{1}{r^{2s-\rho }}||u_{1,k} - q_ku_{2,k}||_{L^\infty(Q_r)}>k.$$
		Next we define 
		$$q_{k,r} = \frac{\int_{Q_r}u_{1,k}u_{2,k}}{\int_{Q_r}u_{2,k}^2},$$
		so that $\int_{Q_r}(u_{1,k}-q_{k,r}u_{2,k})u_{2,k} = 0,$
		and set
		$$\theta(r) = \sup_{k} \sup_{r'>r} \frac{1}{r'^{2s-\rho }}||u_{1,k}-q_{k,r'}u_{2,k}||_{L^\infty(Q_{r'})}.$$
		Notice that $\theta$ is monotone decreasing and thanks to \cite[Lemma B.7]{K21b} and the contradiction assumption it holds $\lim_{r\downarrow0}\theta(r)=\infty.$
		Now choose sequences $r_m$ and $k_m$, so that 
		$$\frac{m}{4}\leq\frac{1}{2}\theta(r_m)\leq \frac{1}{r_m^{2s-\rho 
		}}||u_{1,k_m}-q_{k_m,r_m}u_{2,k_m}||_{L^\infty(Q_{r_m})}\leq \theta(r_m),$$
		and define
		$$v_m(x,t) = \frac{1}{\theta(r_m)r_m^{2s-\rho }} (u_{1,k_m}-q_{k_m,r_m}u_{2,k_m})(r_mx,r_m^{2s}t).$$
		Next we estimate
		\begin{align*}
			|q_{k,r}-q_{k,2r}|&\leq Cr^{-s} ||q_{k,r}u_{2,k}-q_{k,2r}u_{2,k}||_{L^\infty(Q_r\cap \{d_k>r/2\})}\\
			&\leq Cr^{-s}(||u_{1,k}-q_{k,r}u_{2,k}||_{L^\infty(Q_r)}+||u_{1,k}-q_{k,2r}u_{2,k}||_{L^\infty(Q_{2r})}) \\
			&\leq  C\theta(r)r^{s-\rho }.
		\end{align*}
		This implies in the same way as in \cite[Proposition 4.4]{AR20}, that $\frac{q_{k,r}}{\theta(r)}\to 0,$ as $r\downarrow0,$ and $||v_m||_{L^\infty(Q_R)}\leq CR^{2s-\rho }.$ Moreover by the definition of $\theta$ it holds $||v_m||_{L^\infty(Q_1)}\geq \frac{1}{2}.$
		Let us turn to the equation that $v_m$ satisfies
		$$\left\lbrace
		\begin{array}{rcll}
		(\partial_t + L_{k_m})v_m& =& \frac{r_m^\rho }{\theta(r_m)}(f_{1,k_m}-q_{k_m,r_m}f_{2,k_m})&\text{in } U_m\cap Q_{1/r_m}\\
		u&=&0&\text{in }U_m^c\cap Q_{1/r_m},
		\end{array}
		\right.$$
		with notation as in \eqref{rescaledEquation}. Rescaled Proposition \ref{prop:holderBoundaryEstimate} say, that for every $M>1$ we get that
		$$\left[ v_m\right]_{C_p^{\gamma}(Q_{M/2})}\leq C(M),$$
		provided that $m$ is big enough. This along with the convergence result \cite[Lemma 3.1]{FR17} allows us to pass to the limit (up to a subsequence) to get that $v:=\lim_{m\to\infty}v_m$ solves
		$$\left\lbrace
		\begin{array}{rcll}
		(\partial_t + L_{0})v& =& 0&\text{in } \{x_n>0\}\\
		u&=&0&\text{in }\{x_n\leq 0\},
		\end{array}
		\right.$$
		for some operator $L_0$ satisfying \eqref{eq:operatorForm}. Moreover it holds $||v||_{L^\infty(Q_1)}\geq \frac{1}{2}$, $||v||_{L^\infty(Q_R)}\leq CR^{2s-\rho }$ and $\int_{Q_1}v\cdot (x_n)_+^s = 0$. Then the Liouville theorem from \cite[Theorem 4.11]{FR17} implies that $v (x)= q (x_n)_+^s$, which gives rise to the contradiction.
		
		To prove the moreover part, we first notice that $q_{(z,t_0)}$ are bounded independently of the point $(z,t_0)$, thanks to \cite[Lemma B.5]{K21b}.
		Then we apply interior estimates (Lemma~\ref{lem:interiorRegularityWithGrowth}) on rescaled function $u_1 - q_{(z,t_0)}u_2$ together with the above established estimate with \cite[Lemma A.3]{K21a}, to get the wanted estimate.
	\end{proof} 

	Note that the argument works only for orders smaller than $2s$, since the growth of the blow-up function at infinity inherits this order, which is the largest admissible growth when dealing with non-local operators of order $2s$.
	
	As mentioned before, the established expansion implies the boundary regularity estimate for the quotient of two solutions, for orders smaller than $s$.

	\begin{corollary}\label{cor:basicQuotientEstimate}
	Let $s\in(\frac{1}{2},1)$. Let $\Omega\subset \R^{n+1}$ be $C^{1}_p$ in $Q_1$. Let  $\rho \in(0,s)$. Let $L$ be an operator of the form \eqref{eq:operatorForm}. Assume $u_i\in C_p^{\gamma}(Q_1)$, $i\in\{1,2\}$, $\gamma>0$, solve
	$$\left\lbrace\begin{array}{rcll}
	(\partial_t  + L)u_i &=& f_i& \text{in }\Omega\cap Q_1\\
	u_i&=&0&\text{in }\Omega^c\cap Q_1,
	\end{array}\right. $$
	with  $||d^\rho f_2||_{L^\infty(\Omega\cap Q_1)}\leq 1$ and $||u_2||_{L^\infty(B_R\times(-1,1))}\leq R^{2s-\rho }$ for $R>1$. Assume also that $u_2\geq c_0 d^s$, for some $c_0>0$.
	
	Then 
	$$\left|\left| \frac{u_1}{u_2}\right|\right| _{C_p^{s-\rho }(\overline{\Omega}\cap Q_{1/2})}\leq C\left(||d^\rho f_1||_{L^\infty(\Omega\cap Q_1)} +\sup_{R>1} R^{-2s+\rho } ||u_1||_{L^\infty(B_R\times(-1,1))}\right),$$
	with $C>0$ depending only on $n,s,\rho ,c_0,G_0$,  and ellipticity constants.
\end{corollary}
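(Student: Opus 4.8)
The corollary should follow from the expansion in Proposition~\ref{boundaryHarnackExpansion} combined with interior regularity estimates, exactly in the spirit of the elliptic argument of \cite{AR20}. First I would reduce to the normalized case $\|d^\rho f_1\|_{L^\infty(\Omega\cap Q_1)}+\sup_{R>1}R^{-2s+\rho}\|u_1\|_{L^\infty(B_R\times(-1,1))}\le 1$ by dividing $u_1$ by the appropriate constant; the quotient $u_1/u_2$ scales linearly in $u_1$, so this is harmless. With this normalization, the hypotheses of Proposition~\ref{boundaryHarnackExpansion} hold for the pair $(u_1,u_2)$ (with $\varepsilon$ chosen so that $2s-\rho<2s-\varepsilon$, i.e.\ any $\varepsilon\le\rho$ works), and the expansion gives, for every boundary point $(z,t_0)\in\partial\Omega\cap Q_{1/2}$, a constant $q_{(z,t_0)}$ with
$$|u_1(x,t)-q_{(z,t_0)}u_2(x,t)|\le C\big(|x-z|+|t-t_0|^{\frac{1}{2s}}\big)^{2s-\rho},$$
together with the interior seminorm bound $[u_1-q_{(z,t_0)}u_2]_{C^{2s-\rho}_p(Q_r(x_0,t_0))}\le C$ whenever $d_x(x_0)=|x_0-z|=c_\Omega r$. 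Moreover the $q_{(z,t_0)}$ are uniformly bounded.

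\textbf{Identifying the quotient with the $q$'s and its H\"older bound.} The key observation is that $q_{(z,t_0)}$ is precisely the boundary value of $u_1/u_2$ at $(z,t_0)$: since $u_2\ge c_0 d^s$ and both $u_1,u_2$ vanish like $d^s$, dividing the pointwise expansion by $u_2(x,t)\gtrsim d^s$ along an interior approach to $(z,t_0)$ (say with $d_x(x,t)\sim |x-z|+|t-t_0|^{1/2s}$, using that $\Omega$ is Lipschitz hence admits nontangential approach) yields
$$\Big|\frac{u_1}{u_2}(x,t)-q_{(z,t_0)}\Big|\le \frac{C}{c_0}\big(|x-z|+|t-t_0|^{\frac{1}{2s}}\big)^{s-\rho},$$
so $u_1/u_2$ extends continuously to $\overline{\Omega}\cap Q_{1/2}$ with boundary value $q_{(z,t_0)}$, and in particular $\|u_1/u_2\|_{L^\infty}\le C$. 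To get the full $C^{s-\rho}_p$ bound I would estimate $|(u_1/u_2)(P)-(u_1/u_2)(P')|$ for two points $P=(x,t)$, $P'=(x',t')$ in $\overline\Omega\cap Q_{1/2}$ by the standard interior-vs-boundary dichotomy: let $r=|x-x'|+|t-t'|^{1/2s}$ and let $d_P$ be the (parabolic) distance of $P$ to $\partial\Omega$. If $d_P\lesssim r$ (the ``near the boundary'' case), pick a boundary point $(z,t_0)$ near $P$ and use the two-sided comparison with $q_{(z,t_0)}u_2$ just displayed to control both $|(u_1/u_2)(P)-q_{(z,t_0)}|$ and $|(u_1/u_2)(P')-q_{(z,t_0)}|$ by $Cr^{s-\rho}$. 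If $d_P\gtrsim r$ (the ``interior'' case), $P$ and $P'$ lie in a parabolic cylinder $Q_{cr}(x_0,t_0)$ on which $u_2\ge c_0 d^s\gtrsim d_P^s\gtrsim r^s$ and $[u_1-q_{(z,t_0)}u_2]_{C^{2s-\rho}_p(Q_{cr})}\le C$, $[u_2]_{C^{2s-\rho}_p(Q_{cr})}\le C$ (the latter from interior estimates for $u_2$, Lemma~\ref{lem:interiorRegularityWithGrowth}); writing $u_1/u_2=q_{(z,t_0)}+(u_1-q_{(z,t_0)}u_2)/u_2$ and differentiating a quotient whose numerator is $C^{2s-\rho}_p$-small at scale $r$ (it vanishes to order $2s-\rho$ at the nearby boundary point, so its size on $Q_{cr}$ is $\lesssim r^{2s-\rho}$) and whose denominator is $\gtrsim r^s$ and $C^{s}_p$, one gets oscillation $\lesssim r^{2s-\rho}/r^{s}=r^{s-\rho}$ on $Q_{cr}$, hence $|(u_1/u_2)(P)-(u_1/u_2)(P')|\le C r^{s-\rho}$. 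This is exactly the computation carried out in \cite[Lemma A.3]{K21a} / \cite[Proposition 4.4]{AR20}, which I would cite rather than redo.

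\textbf{Main obstacle.} The routine parts are the reduction to the normalized case and the two-case split; the only genuinely delicate point is the interior case of the quotient estimate, namely quantifying how the $C^{2s-\rho}_p$ smallness of $u_1-q_{(z,t_0)}u_2$ on the shrinking cylinders $Q_r(x_0,t_0)$ interacts with the lower bound $u_2\gtrsim r^s$ and the $C^{s}_p$ regularity of $u_2$ to produce the correct exponent $s-\rho$ — in the parabolic nonlocal setting one must be careful that $u_2$'s interior regularity is controlled (via Lemma~\ref{lem:interiorRegularityWithGrowth}, which as noted in Remark~\ref{rem:1} needs the global time regularity built into the hypotheses $u_i\in C^\gamma_p(Q_1)$) and that all the cylinders $Q_r(x_0,t_0)$ used stay inside $Q_1$, which forces the restriction to $Q_{1/2}$ and a covering argument. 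Once the quotient-rule estimate on a single cylinder is in hand, summing over a dyadic family of such cylinders converging to a boundary point (again \cite[Lemma A.3]{K21a}) closes the proof.
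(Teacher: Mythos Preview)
Your proposal is correct and follows essentially the same route as the paper. The only differences are in packaging: where you do the interior/boundary two-case split by hand and cite \cite[Lemma A.3]{K21a}, the paper invokes \cite[Lemma B.2]{K21b} to reduce directly to Whitney-type cylinders $Q_r(x_0,t_0)$ with $d_x(x_0,t_0)\le C_1 r$; and where you work out the quotient-rule computation ($u_1/u_2=q+(u_1-qu_2)/u_2$, numerator $\lesssim r^{2s-\rho}$, denominator $\gtrsim r^s$, hence oscillation $\lesssim r^{s-\rho}$), the paper packages exactly this computation into its Lemma~\ref{lem:divisionLemma} with parameters $a=s-\rho$, $b=s$, the required bound $[u_2]_{C^{s-\rho}_p(Q_r)}\le Cr^\rho$ coming from Lemma~\ref{lem:interiorRegularityWithGrowth} as you anticipated.
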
 

\begin{proof}
	Dividing $u_1$ with $||d^\rho f_1||_{L^\infty(\Omega\cap Q_1)} + \sup_{R>1} R^{-2s+\rho } ||u_1||_{L^\infty(B_R\times(-1,1))}$ and thanks to \cite[Lemma B.2]{K21b} it suffices to prove $$\left[ \frac{u_1}{u_2}\right] _{C_p^{s-\rho }(Q_r(x_0,t_0))}\leq C,$$
	independently of $(x_0,t_0)$ and $r$ whenever $Q_{2r}(x_0,t_0)\subset \Omega\cap Q_{1/2}$ and $d_{x}(x_0,t_0)\leq C_1 r$, with $C_1$ depending only on $\Omega $. Denote $(z,t_0)\in\partial\Omega$ the closest point to $(x_0,t_0),$ and $C_1r = |x_0-z|$. Proposition \ref{boundaryHarnackExpansion} gives 
	$$\left[ u_1-qu_2\right]_{C^{2s-\rho }_p(Q_r(x_0,t_0))}\leq C, \quad\quad ||u_1-qu_2||_{L^{\infty}(Q_r(z,t_0))}\leq Cr^{2s-\rho },$$
	for a suitable constant $q$. We now apply Lemma \ref{lem:divisionLemma} to get
	$$\left[ \frac{u_1}{u_2}\right]_{C_p^{s-\rho }(Q_r(x_0,t_0))}\leq C,$$
	where the interior regularity for $u_2$ is provided by Lemma \ref{lem:interiorRegularityWithGrowth}. The claim is proven.
\end{proof}

We prove next an estimate for the regularity of the quotient of two solutions in the form needed later on.

\begin{corollary}\label{cor:advancedQuotientEstimate}
	Let $s\in(\frac{1}{2},1)$. Let $\Omega\subset \R^{n+1}$ be $C^{1}_p$ in $Q_1$. Let $\varepsilon,\varepsilon'>0$ and $\gamma\in (0,1)$. Let $L$ be an operator of the form \eqref{eq:operatorForm}. Assume $u_i\in C_p^{\varepsilon'}(\R^n\times (-1,1))$, $i\in\{1,2\}$, solve
	$$\left\lbrace\begin{array}{rcll}
	(\partial_t + L)u_i &=& f_i& \text{in }\Omega\cap Q_1\\
	u_i&=&0&\text{in }\Omega^c\cap Q_1,
	\end{array}\right. $$
	with $||f_2||_{L^\infty(\Omega\cap Q_1)}\leq 1$, $||u_2||_{L^\infty(B_R\times(-1,1))}\leq R^{2s-\varepsilon}$, for $R>1$. Assume also that $u_2\geq c_0 d^s$, for some $c_0>0$.
	
	Then 
	$$\left|\left| \frac{u_1}{u_2}\right|\right| _{C_p^{s-\varepsilon}(\overline{\Omega}\cap Q_{1/2})}\leq C\left(\left[ f_1\right] _{C^\gamma_p(\Omega\cap Q_1)} + \left|\left|u_1\right|\right|_{L^\infty(\R^{n}\times(-1,1))}\right),$$
	with $C>0$ depending only on $n,s,\varepsilon,c_0,G_0$, and ellipticity constants.
	
	Moreover
	$$\left|\left| \frac{u_1}{u_2}\right|\right| _{C_p^{s-\varepsilon}(\overline{\Omega}\cap Q_{1/2})}\leq C\left(\left[ f_1\right] _{C^\gamma_p(\Omega\cap Q_1)} + \sup_{R>1} R^{-2s+\varepsilon}  \left[ u_1\right]_{C^{\gamma}_p(B_R\times(-1,1))} + ||u_1||_{L^\infty(Q_1)}\right),$$
	and if the kernel $K$ of the operator $L$ satisfies $\left[ K\right]_{C^\gamma(B_r^c)}\leq C r^{-n-2s-\gamma},$ we have
	\begin{align*}
	\left|\left| \frac{u_1}{u_2}\right|\right| _{C_p^{s-\varepsilon}(\overline{\Omega}\cap Q_{1/2})}\leq& C\left(\left[ f_1\right] _{C^\gamma_p(\Omega\cap Q_1)} + \sup_{R>1} R^{-2s+\varepsilon}  \left[ u_1\right]_{C^{\frac{\gamma}{2s}}_t(B_R\times(-1,1))}\right. \\
	&\quad\quad+ \left.\sup_{R>1} R^{-2s-\gamma-\varepsilon}||u_1||_{L^\infty(B_R\times(-1,1))}\right).
	\end{align*}
\end{corollary}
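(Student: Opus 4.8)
The plan is to deduce all three estimates from the basic quotient bound of Corollary~\ref{cor:basicQuotientEstimate}, which controls $\|u_1/u_2\|_{C^{s-\rho}_p(\overline{\Omega}\cap Q_{1/2})}$ by $\|d^{\rho}f_1\|_{L^\infty(\Omega\cap Q_1)}+\sup_{R>1}R^{-2s+\rho}\|u_1\|_{L^\infty(B_R\times(-1,1))}$ for any $\rho\in(0,s)$; I would take $\rho=\varepsilon$ (assuming $\varepsilon<s$ without loss of generality). The hypotheses on the denominator transfer verbatim: $\|d^{\varepsilon}f_2\|_{L^\infty}\le C\|f_2\|_{L^\infty}\le C$ and $\|u_2\|_{L^\infty(B_R)}\le R^{2s-\varepsilon}$, so after rescaling $f_2$ and $u_2$ by a fixed constant (which preserves $u_2\ge c_0 d^s$) Corollary~\ref{cor:basicQuotientEstimate} applies. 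Thus only two things have to be done: replace $\|d^{\varepsilon}f_1\|_{L^\infty}$ by $[f_1]_{C^\gamma_p}$ together with a norm of $u_1$, and replace $\sup_{R>1}R^{-2s+\varepsilon}\|u_1\|_{L^\infty(B_R)}$ by the far-field quantities appearing in each of the three conclusions.

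For the first conclusion I would use the auxiliary-function trick from the proof of Corollary~\ref{cor:advancedBoundaryReg}. Let $v$ solve $(\partial_t+L)v=f_1(0,0)$ in $\Omega\cap Q_1$ with $v=0$ in $(\Omega\cap Q_1)^c$; then $v=f_1(0,0)\,w$, where $w$ solves the same problem with right-hand side $1$ (a bounded function), so $|f_1(0,0)|\le C\|v\|_{L^\infty}$, while the comparison principle applied to $u_1-v$ gives $\|u_1-v\|_{L^\infty(\Omega\cap Q_1)}\le C(\|(f_1-f_1(0,0))d^s\|_{L^\infty}+\|u_1\|_{L^\infty(\R^n\times(-1,1))})\le C([f_1]_{C^\gamma_p}+\|u_1\|_{L^\infty})$, using $|f_1-f_1(0,0)|\le C[f_1]_{C^\gamma_p}$ on $Q_1$. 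Hence $\|v\|_{L^\infty}$, and therefore $|f_1(0,0)|$ and $\|d^{\varepsilon}f_1\|_{L^\infty(\Omega\cap Q_1)}$, are all $\le C([f_1]_{C^\gamma_p}+\|u_1\|_{L^\infty(\R^n\times(-1,1))})$. Since $\varepsilon<2s$ one has $\sup_{R>1}R^{-2s+\varepsilon}\|u_1\|_{L^\infty(B_R)}\le\|u_1\|_{L^\infty(\R^n\times(-1,1))}$, so Corollary~\ref{cor:basicQuotientEstimate} yields the first inequality.

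For the two moreover inequalities I would localise. Fix $\eta\in C^\infty_c(Q_1)$ with $\eta\equiv1$ on $Q_{3/4}$; since $u_1\equiv0$ on $\Omega^c\cap Q_{3/4}$, the function $\eta u_1$ vanishes there, is supported in $Q_1$, and solves $(\partial_t+L)(\eta u_1)=f_1-h$ in $\Omega\cap Q_{3/4}$, where $h(x,t)=\int_{\R^n}(1-\eta(y,t))u_1(y,t)K(x-y)\,dy$ is smooth on $Q_{1/2}$ because the integrand stays away from the diagonal. Invoking the first inequality for $\eta u_1/u_2$ (after rescaling $Q_{3/4}$ to $Q_1$, and a standard covering to recover all of $\overline{\Omega}\cap Q_{1/2}$), with $\|\eta u_1\|_{L^\infty(\R^n\times(-1,1))}\le\|u_1\|_{L^\infty(Q_1)}$ and using $\eta u_1=u_1$ on $Q_{1/2}$, everything reduces to estimating $[h]_{C^\gamma_p(Q_{1/2})}$. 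Differencing $h$ in $x$ and in $t$ and splitting off the $t$-dependence of $\eta$, whose transition set is a bounded region of $Q_1$ and so contributes $\lesssim\|u_1\|_{L^\infty(Q_1)}$: for the second inequality one moves the $x$- and $t$-differences onto $u_1$ by changing variables, obtaining $[h]_{C^\gamma_p(Q_{1/2})}\lesssim\|u_1\|_{L^\infty(Q_1)}+\int_{|y|\gtrsim1}[u_1]_{C^\gamma_p(B_{|y|}\times(-1,1))}|y|^{-n-2s}\,dy\lesssim\|u_1\|_{L^\infty(Q_1)}+\sup_{R>1}R^{-2s+\varepsilon}[u_1]_{C^\gamma_p(B_R\times(-1,1))}$, the integral converging since the growth exponent $2s-\varepsilon$ is below $2s$. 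For the third inequality one instead leaves the $x$-difference on the kernel, using the hypothesis $[K]_{C^\gamma(B_r^c)}\le Cr^{-n-2s-\gamma}$, so the spatial part of $[h]_{C^\gamma_p}$ is $\lesssim\int_{|y|\gtrsim1}\|u_1\|_{L^\infty(B_{|y|})}|y|^{-n-2s-\gamma}\,dy\lesssim\sup_{R>1}R^{-2s-\gamma-\varepsilon}\|u_1\|_{L^\infty(B_R)}$, while the temporal part (the kernel being $t$-independent) is handled as before in terms of $\sup_{R>1}R^{-2s+\varepsilon}[u_1]_{C^{\gamma/2s}_t(B_R)}$; since also $\|u_1\|_{L^\infty(Q_1)}\le\sup_{R>1}R^{-2s-\gamma-\varepsilon}\|u_1\|_{L^\infty(B_R\times(-1,1))}$, all terms are absorbed. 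Feeding these bounds on $[f_1-h]_{C^\gamma_p(\Omega\cap Q_{3/4})}$ into the first inequality produces the remaining two.

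I expect the main obstacle to be the tail bookkeeping for $h$: one has to check that, for the growth rates of $u_1$ and of its H\"older seminorms assumed in each conclusion, together with the decay of $K$, the bound $[K]_{C^\gamma(B_r^c)}\le Cr^{-n-2s-\gamma}$, and the assumption $s>\tfrac12$, every integral over $\{|y|\gtrsim1\}$ converges and is dominated by exactly the norm written in that conclusion, and that the $t$-dependence of the cutoff only produces terms supported in a bounded part of $Q_1$. A secondary matter to verify is that $\eta u_1$ really does meet all the hypotheses needed to invoke the first inequality (vanishing on $\Omega^c\cap Q_{3/4}$, the bounds on $u_2$ unchanged, the a priori $C^{\varepsilon'}_p$ regularity inherited from $u_1$) and that adjoining the new datum $h$ does not interfere with the reduction of $f_1$ via the auxiliary function.
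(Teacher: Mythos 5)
Your proposal is correct and follows essentially the same route as the paper: first use the auxiliary solution $v$ of $(\partial_t+L)v=f_1(0,0)$ together with the comparison principle to convert $\|f_1\|_{L^\infty}$ (or $\|d^\varepsilon f_1\|_{L^\infty}$) into $[f_1]_{C^\gamma_p}+\|u_1\|_{L^\infty}$, then feed this into Corollary~\ref{cor:basicQuotientEstimate}, and finally localise via a cutoff of $u_1$ and estimate the resulting nonlocal tail term to prove the two moreover statements. Two immaterial differences from the paper: the paper uses a purely spatial cutoff $\chi\in C^\infty_c(B_1)$ (slightly cleaner, since then $\partial_t$ commutes with multiplication by $\chi$ everywhere), while you take $\eta\in C^\infty_c(Q_1)$; and the sign in your identity $(\partial_t+L)(\eta u_1)=f_1-h$ should be $+h$, which of course does not affect the seminorm bound.
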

\begin{proof}
	We split $u_1 = u+v$, where $(\partial_t + L)v = f_1(0,0)$ in $\Omega$, and $v=0$ in $\Omega^c$. Then we have that $|f_1(0,0)|\leq C ||v||_{L^\infty(\Omega)}$ and $||u||_{L^\infty(\Omega\cap Q_1)}\leq C(||f_1-f_1(0,0)||_{L^\infty(\Omega\cap Q_1)})\leq C\left[ f_1\right] _{C^\gamma_p(\Omega\cap Q_1)}.$ Hence 
	\begin{align*}
	||f_1||_{L^\infty(\Omega\cap Q_1)} & \leq |f_1(0,0)| + C\left[ f_1\right] _{C^\gamma_p(\Omega\cap Q_1)}\leq C\left(||v||_{L^\infty(\Omega\cap Q_1)} + \left[ f_1\right] _{C^\gamma_p(\Omega\cap Q_1)}\right)\\
	&\leq C \left(||u_1||_{L^\infty(\Omega\cap Q_1)} + ||u||_{L^\infty(\Omega\cap Q_1)} + \left[ f_1\right] _{C^\gamma_p(\Omega\cap Q_1)}\right)\\
	&\leq  C \left(||u_1||_{L^\infty(\Omega\cap Q_1)} + \left[ f_1\right] _{C^\gamma_p(\Omega\cap Q_1)}\right).
	\end{align*}
	The first estimate now follows from Corollary \ref{cor:basicQuotientEstimate}.
	
	To prove the moreover case, we use the already proven result on the function $\bar{u} = u_1\chi$ instead of $u_1$, where $\chi \in C^\infty_c(B_1)$, with $\chi\equiv 1$ in $B_{4/5}$.
	Then $\bar{u}$ solves $(\partial_t+L) \bar{u} = (\partial_t+L) u_1 + L(u_1(\chi-1)) = f_1 + \bar{f} $ in $Q_{3/4}\cap\Omega$. We estimate $\left[ \bar{f}\right] _{C^\gamma_p(\Omega\cap Q_{3/4})}$ as follows
	\begin{align*}
	|\bar{f}(x,t)-\bar{f}(x',t')| =&  \left| \int_{B_{4/5}^c(-x)}u_1(1-\chi)(x+y,t)K(y)dy\right.\\
	&\left.-\int_{B_{4/5}^c(-x')}u_1(1-\chi)(x'+y,t')K(y)dy \right|\\
	\leq& \int_{B_{1/20}^c}|u_1(x+y,t)-u_1(x'+y,t')|K(y)dy\\
	&+ ||u_1||_{L^\infty(Q_1)}\int_{B_{1/20}^c}|\chi(x+y)-\chi(x'+y)|K(y)dy\\
	\leq&  (|x-x'|^\gamma+|t-t'|^{\frac{\gamma}{2s}})\left(\int_{B_{1/20}^c}\left[u_1 \right]_{C^\gamma_p(B_{|y|+1})}K(y)dy + C_\chi||u_1||_{L^\infty(Q_1)}\right)\\
	\leq&  (|x-x'|^\gamma+|t-t'|^{\frac{\gamma}{2s}})\left(\Lambda C_0
	\int_{B_{1/2}^c}\frac{(|y|+1)^{2s-\varepsilon}}{|y|^{n+2s}}dy+C||u_1||_{L^\infty(Q_1)}\right)\\
	\leq& (|x-x'|^\gamma+|t-t'|^{\frac{\gamma}{2s}})\left(CC_0+C||u_1||_{L^\infty(Q_1)}\right),
	\end{align*}
	where $C_0$ stands for $\sup_{R>1} R^{-2s+\varepsilon}  \left[ u_1\right]_{C^{\gamma}_p(B_R\times(-1,1))}.$
	Applying the already proven inequality for $Q_{3/4}$ and $Q_{1/2}$ (which follows from the covering argument) gives 
	$$\left[ \frac{u_1}{u_2}\right] _{C_p^{s-2\varepsilon}(\overline{\Omega}\cap Q_{1/2})}\leq C\left(\left[ f_1\right] _{C^\gamma_p(\Omega\cap Q_1)} +\sup_{R>1} R^{-2s+\varepsilon}  \left[ u_1\right]_{C^{\gamma}_p(B_R\times(-1,1))}+ \left|\left|u_1\right|\right|_{L^\infty(Q_1)}\right).$$
	To pass from the growth control with the parabolic H\"older seminorms to seminorms in time with $L^\infty$ growth, we proceed in the same way as in the moreover case of Lemma \ref{lem:interiorRegularityWithGrowth}.
	The claim is proven.
\end{proof}

In the following proposition we establish the expansion for two solutions for orders between $2s$ and $1+s$. In order to exceed order $2s$, we work with H\"older seminorms of the quotient directly. We follow the idea of \cite[Proposition 3.3]{RV18}. 

\begin{proposition}\label{prop:decay2functions}
	Let $s\in(\frac{1}{2},1)$. Let $\Omega\subset \R^{n+1}$ be $C^{1}_p$ in $Q_1$. Let $\varepsilon>0$ and $\gamma>0$. Let $L$ be an operator of the form \eqref{eq:operatorForm}, with kernel $K\in C^{1-s}(\S^{n-1})$ satisfying $\left[K\right]_{C^{1-s}(\S^{n-1})}\leq M_0$, for some $M_0>0$. Assume $u_i\in C_p^{\gamma}(Q_1)$, $i\in\{1,2\}$, solve
	$$\left\lbrace\begin{array}{rcll}
	(\partial_t + L)u_i &=& f_i& \text{in }\Omega\cap Q_1\\
	u_i&=&0&\text{in }\Omega^c\cap Q_1,
	\end{array}\right. $$
	with $\left[ f_i\right]_{C^{1-s}_p(\Omega\cap Q_1)}\leq 1$, $\left[ u_i\right] _{C^{\frac{1-s}{2s}}_t(B_R\times(-1,1))}\leq R^{2s-1}$, and $||u_2||_{L^\infty(B_R\times(-1,1))}\leq R^s$ and $||u_1||_{L^\infty(B_R\times(-1,1))}\leq R^{3s-1-\varepsilon}$. Assume also that $u_2\geq c_0 d^s$, for some $c_0>0$.
	
	Then for every $(z,t)\in\partial\Omega\cap Q_{1/2}$ we have
	$$\left[ \frac{u_1}{u_2}-q_r\right] _{C_p^{s-2\varepsilon}(\Omega\cap Q_{r}(z,t))}\leq Cr^{1-s},$$
	where $q_r$ equals $\operatorname{arg}\min\limits_{q\in\R}\int_{Q_r\cap \Omega} (\frac{u_1}{u_2}-q)^2 $ and $C>0$ depends only on $n,s,\varepsilon,c_0,G_0,M_0$ and ellipticity constants.
\end{proposition}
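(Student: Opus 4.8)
The plan is to argue by contradiction and compactness, following the scheme of the earlier expansions (Proposition \ref{boundaryHarnackExpansion}) and of \cite[Proposition 3.3]{RV18}, but now iterating directly on H\"older seminorms of the quotient rather than on $L^\infty$ norms of $u_1-qu_2$, which is what allows us to pass beyond order $2s$. After the usual translation/rescaling reduction we may assume $(z,t)=(0,0)\in\partial\Omega$. Suppose the conclusion fails: then there are sequences $\Omega_k$ (which are $C^1_p$ in $Q_1$), operators $L_k$ of the form \eqref{eq:operatorForm} with kernels $K_k\in C^{1-s}(\S^{n-1})$ satisfying the bound by $M_0$, solutions $u_{i,k}$ and right-hand sides $f_{i,k}$ with all the stated normalisations, but with
$$\sup_{r>0}\frac{1}{r^{1-s}}\Bigl[\frac{u_{1,k}}{u_{2,k}}-q_{k,r}\Bigr]_{C_p^{s-2\varepsilon}(\Omega_k\cap Q_r)}>k,$$
where $q_{k,r}$ is the $L^2(Q_r\cap\Omega_k)$-best constant approximation of $u_{1,k}/u_{2,k}$. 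Define the monotone nonincreasing quantity
$$\theta(r)=\sup_k\sup_{r'>r}\frac{1}{r'^{1-s}}\Bigl[\frac{u_{1,k}}{u_{2,k}}-q_{k,r'}\Bigr]_{C_p^{s-2\varepsilon}(\Omega_k\cap Q_{r'})};$$
using a dyadic/telescoping lemma of the type \cite[Lemma B.7]{K21b} together with the contradiction hypothesis one gets $\theta(r)\to\infty$ as $r\downarrow0$. Then pick $r_m\downarrow 0$, $k_m$ realising $\theta(r_m)$ up to a factor $2$, and blow up by setting
$$v_m(x,t)=\frac{1}{\theta(r_m)\,r_m^{s-2\varepsilon}}\Bigl(\frac{u_{1,k_m}}{u_{2,k_m}}-q_{k_m,r_m}\Bigr)(r_mx,r_m^{2s}t).$$

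The key structural step is to control the equation satisfied by $v_m$. Since $u_1/u_2$ is not itself a solution, one writes, as in \cite{RV18}, the equation for $u_2(u_1/u_2)$: in $\Omega_k\cap Q_1$,
$$(\partial_t+L)\Bigl(u_2\cdot\frac{u_1}{u_2}\Bigr)=f_1,\qquad \frac{u_1}{u_2}\,(\partial_t+L)u_2=\frac{u_1}{u_2}f_2,$$
so the "carré du champ'' type bilinear remainder $B(u_2,u_1/u_2)(x,t):=\int (u_2(x,t)-u_2(y,t))\bigl(\tfrac{u_1}{u_2}(x,t)-\tfrac{u_1}{u_2}(y,t)\bigr)K(x-y)\,dy$ satisfies $B(u_2,u_1/u_2)=f_1-\frac{u_1}{u_2}f_2$. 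Rescaling and using the established lower-order estimates — Corollary \ref{cor:advancedQuotientEstimate} for the $C^{s-\varepsilon}_p$ bound on $u_1/u_2$ near the boundary, Proposition \ref{boundaryHarnackExpansion} and interior estimates (Lemma \ref{lem:interiorRegularityWithGrowth}) for $u_2\sim d^s$ and its derivatives, and the kernel regularity $K\in C^{1-s}(\S^{n-1})$ to handle the tails — one shows that the right-hand side of the equation for $v_m$ tends to $0$ locally uniformly, that $v_m$ is uniformly $C_p^{s-2\varepsilon}$ on compacts (rescaled Corollary \ref{cor:advancedBoundaryReg}/Corollary \ref{cor:advancedQuotientEstimate}), that $v_m(0,0)=0$, that $\|v_m\|_{L^\infty(Q_1)}\gtrsim 1$ by the choice of $r_m$ through the seminorm normalisation, and that $\|v_m\|_{L^\infty(Q_R)}\lesssim R^{s-2\varepsilon+?}$ — here one must track carefully the interplay of the growth exponents $s$ (for $u_2$), $3s-1-\varepsilon$ (for $u_1$) and $1-s$ (the time seminorm), which caps the admissible growth of the blow-up below $2s$. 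One also keeps $\int_{Q_1}v_m\,w_m=0$ for a suitable weight $w_m$ coming from the best-constant normalisation, which passes to the limit to kill the constant.

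Passing to a subsequential limit via \cite[Lemma 3.1]{FR17}, $v_m\to v$ locally uniformly with $v$ satisfying a linear (divergence-free remainder) equation in $\{x_n>0\}$, vanishing on $\{x_n\le0\}$, with $v(0,0)=0$, controlled growth, $\int v\,w=0$, and $\|v\|_{L^\infty(Q_1)}\ge c>0$. A Liouville-type theorem in the half space — the parabolic analogue of \cite[Theorem 4.11]{FR17}, or the version proved in \cite{RV18} for quotients — then forces $v$ to be (in the quotient variable) an affine function of $x$ of a prescribed controlled form, and the orthogonality $\int v\,w=0$ together with $v(0,0)=0$ and the sublinear-enough growth forces $v\equiv0$, contradicting $\|v\|_{L^\infty(Q_1)}\ge c$.

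The main obstacle, and where most of the work lies, is the second paragraph: deriving and estimating the equation for the quotient. Unlike the single-function expansions, here the bilinear "error'' $B(u_2,u_1/u_2)$ and the coefficient $u_2^{-1}$ degenerate like $d^s$ and $d^{-s}$ at the moving boundary, so one needs the precise boundary growth $u_2\asymp d^s$, the interior Schauder-type bounds for $u_2$ with the correct $d^{s-k}$ weights (Lemma \ref{lem:interiorRegularityWithGrowth}), and the behaviour of $L$ applied to $d^s$ (Section \ref{sec:Lds}) to see that the rescaled error genuinely vanishes and that the limit equation is of the expected nondegenerate type in the half-space. Getting the exponents to close — i.e. that the gain is exactly $1-s$ so that the blow-up growth stays strictly below $2s$ and the Liouville theorem applies — is the delicate bookkeeping, and the hypothesis $s>\tfrac12$ together with $K\in C^{1-s}(\S^{n-1})$ is used exactly there.
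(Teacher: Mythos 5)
Your blow-up contradiction framework --- the $L^2$-best constant $q_r$, the monotone $\theta(r)$, the choice of $r_m,k_m$, the rescaling, the orthogonality condition, the growth control, compactness and Liouville --- is the paper's. Two small slips: the blow-up must divide by $\theta(r_m)r_m^{1-2\varepsilon}$, not $\theta(r_m)r_m^{s-2\varepsilon}$, or else $\left[v_m\right]_{C^{s-2\varepsilon}_p(Q_1)}$ is not of order one and the normalization that produces the contradiction is lost; and at this order Liouville gives a \emph{constant}, not an affine profile (the affine polynomial appears only at the later order-$3s$ step, Proposition \ref{prop:expansion2functions3s}).

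The substantive gap is your ``key structural step.'' You propose to write and pass to the limit in a PDE for the quotient $w=u_1/u_2$ through a carr\'e-du-champ remainder, attributing this to \cite{RV18}; but that is not what \cite{RV18} (nor this paper) does. Your displayed identity also drops a term: from $(\partial_t+L)(wu_2)=u_2(\partial_t+L)w+w(\partial_t+L)u_2-B(u_2,w)$ one gets $u_2(\partial_t+L)w=f_1-wf_2+B(u_2,w)$, a \emph{degenerate} equation --- the coefficient $u_2\sim d^s$ vanishes at the moving boundary --- and the nonlocal quadratic remainder $B(u_2,w)$ is never actually estimated along the blow-up. One would also need a Liouville theorem for the corresponding degenerate limiting equation in the half-space; none is proved here or in \cite{RV18}.

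The paper sidesteps this entirely. It writes $v_m=v_{1,m}/v_{2,m}$, with $v_{1,m}$ the rescaled $u_{1,k_m}-q_{r_m,k_m}u_{2,k_m}$ and $v_{2,m}$ the rescaled $u_{2,k_m}$: both are genuine solutions of the nondegenerate linear equation, so Corollary \ref{cor:advancedQuotientEstimate} (rescaled) gives uniform $C^{s-\varepsilon}_p$ bounds on $v_m$ on compacts. Then, instead of a PDE for $v_m$, one passes to the limit in the \emph{incremental-difference} equation for $v_{1,m}(\cdot+h,\cdot+\tau)-v_{1,m}$, whose right-hand side is a difference of data terms and is bounded by $C(1+|q_{r_m,k_m}|)/\theta(r_m)\to0$. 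The limit $w=v\cdot(x_n)_+^s$ then satisfies exactly the hypotheses of the expansion-form Liouville theorem \cite[Theorem 2.1]{RV18}, yielding $w=q(x_n)_+^s$ and hence $v\equiv q$. This incremental-difference device is the piece your proposal is missing, and it is what lets the argument close without ever analyzing $B(u_2,u_1/u_2)$.
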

\begin{proof}
	Assume first that $u_1\equiv 0$ in $Q_1^c$. Without loss of generality we can assume that $(z,t)=(0,0)$.  
	We argue by contradiction. Suppose that for each $k\in\N$ there exist $\Omega_k,u_{i,k},f_{i,k},L_k$ as in the statement, so that 
	$$\sup_k\sup_{r>0}r^{s-1}\left[ \frac{u_{1,k}}{u_{2,k}}-q_{r,k}\right] _{C_p^{s-2\varepsilon}(\Omega_k\cap Q_r)} = \infty,$$
	with $q_{r,k} = \operatorname{arg}\min\limits_{q\in\R}\int_{Q_r\cap \Omega_k} (\frac{u_{1,k}}{u_{2,k}}-q)^2 $.
	
	Then we define 
	$$\theta(r) = \sup_k\sup_{\rho>r} \rho^{s-1}\left[ \frac{u_{1,k}}{u_{2,k}}-q_{\rho,k}\right] _{C_p^{s-2\varepsilon}(\Omega_k\cap Q_\rho)},$$
	which is monotone in $r$, finite for $r>0$ by Corollary \ref{cor:advancedQuotientEstimate}, 
	and goes to $\infty$ as $r\downarrow0$, thanks to the contradiction assumption. Hence for every $m\in\N$ we can find $k_m$ and $r_m$, so that 
	$$\frac{m}{4}\leq\frac{\theta(r_m)}{2}\leq r_m^{s-1}  \left[ \frac{u_{1,k_m}}{u_{2,k_m}}-q_{r_m,k_m}\right] _{C_p^{s-2\varepsilon}(\Omega_{k_m\cap Q_{r_m})}}\leq\theta(r_m).$$
	In particular $r_m\to 0$ as $m\to\infty.$
	We define the blow up sequence 
	$$v_m(x,t) = \frac{1}{\theta(r_m)r_m^{1-2\varepsilon}} \left(\frac{u_{1,k_m}}{u_{2,k_m}} - q_{r_m,k_m}\right)(r_mx,r_m^{2s}t).$$
	Notice that by definition of $r_m,k_m$ and $q_{r,m}$ we have
	\begin{equation}\label{eq:contradictionQuantities}
	\left[ v_m\right]_{C_p^{s-2\varepsilon}(\Omega_m\cap Q_1)} \geq \frac{1}{2}\quad\text{and}\quad \int_{\Omega_m\cap Q_1} v_m = 0,
	\end{equation}
	where $\Omega_m= \{(x,t); \text{ } (r_mx,r_m^{2s}t)\in\Omega_{k_m}  \}$.
	Moreover, the way $\theta$ is defined gives a control on the growth of $v_m$ in the following manner
	\begin{align}\label{eq:gorwthControl}
	\begin{split}
	\left[ v_m\right]_{C_p^{s-2\varepsilon}(\Omega_m\cap Q_R)} = & \frac{r_m^{s-2\varepsilon}}{\theta(r_m)r_m^{1-2\varepsilon}}\left[ \frac{u_{1,k_m}}{u_{2,k_m}}- q_{r_m,k_m}\right]_{C_p^{s-2\varepsilon}(\Omega_m\cap Q_{Rr_m})}\\
	\leq &  \frac{1}{\theta(r_m)r_m^{1-s}}\left[ \frac{u_{1,k_m}}{u_{2,k_m}}- q_{Rr_m,k_m}\right]_{C_p^{s-2\varepsilon}(\Omega_m\cap Q_{Rr_m})}\\
	\leq & \frac{\theta(Rr_m)(Rr_m)^{1-s}}{\theta(r_m)r_m^{1-s}}\leq R^{1-s},
	\end{split}
	\end{align}
	for every $R\geq 1.$ 
	In combination with $\int_{\Omega_m\cap Q_1} v_m = 0$ we deduce  $||v_m||_{L^{\infty}(Q_1\cap\Omega_m)}\leq C$ uniformly in $m$. Combining it with the growth of the seminorms, we deduce  $||v_m||_{L^{\infty}(Q_R\cap\Omega_m)}\leq CR^{1-2\varepsilon}.$ Notice that these estimates hold true for general $k,r$, since we only used the definition of $\theta$ and not of $r_m,k_m$. Therefore we have 
	\begin{align*}
	\frac{|q_{r,k}-q_{2r,k}|}{\theta(r)} &=\frac{||q_{r,k}-q_{2r,k}||_{L^\infty(Q_r\cap\Omega_k)}}{\theta(r)}\\
	&\leq \frac{||u_{1,k}/u_{2,k-q_{r,k}}||_{L^\infty(Q_r\cap\Omega_k)}}{\theta(r)} + \frac{||u_{1,k}/u_{2,k-q_{2r,k}}||_{L^\infty(Q_r\cap\Omega_k)}}{\theta(r)} \\
	&\leq r^{1-2\varepsilon}\left|\left|\frac{1}{\theta(r)r^{1-2\varepsilon}}\left(\frac{u_{1,k}}{u_{2,k}} - q_{r,k}\right)\right|\right|_{L^\infty(\Omega_k\cap Q_r)}\\
	&\text{    }+ (2r)^{1-2\varepsilon}\left|\left|\frac{1}{\theta(2r)(2r)^{1-2\varepsilon}}\left(\frac{u_{1,k}}{u_{2,k}} - q_{r,k}\right)\right|\right|_{L^\infty(\Omega_k\cap Q_{2r})}\\
	&\leq Cr^{1-2\varepsilon}.
	\end{align*}
	Hence as in the proof of \cite[Proposition 4.1]{AR20}, we conclude that $\frac{q_{r,k}}{\theta(r)}\to 0 $ as $r\downarrow0$, uniformly in $k$.
	
	We now define $$v_{1,m}(x,t) = \frac{1}{\theta(r_m)r_m^{1+s-2\varepsilon}} \left(u_{1,k_m}- q_{r_m,k_m}u_{2,k_m} \right)(r_mx,r_m^{2s}t),$$ 
	and 
	$$v_{2,m} (x,t) = \frac{1}{r_m^s}u_{2,k_m} (r_mx,r_m^{2s}t),$$
	so that $v_m = \frac{v_{1,m}}{v_{2,m}}.$ Thanks to assumptions on $u_{2,k}$ we have that $\left[ v_{2,m}\right] _{C^s_p(\R^{n}\times(-r_{k_m}^{-2s},r_{k_m}^{-2s}))}\leq 1$, $||(\partial_t + L_{k_m}) v_{2,m}||_{L^\infty(Q_1\cap\Omega_m)}\leq 1$ and $v_{2,m}\geq cc_0 d_m^s$, were $d_m$ denotes the distance function in $\Omega_m$. Therefore we can apply  Corollary \ref{cor:advancedQuotientEstimate} to $u_{1,m}(M\cdot, M^{2s}\cdot)$ and $v_{2,m}(M\cdot, M^{2s}\cdot)$, to get
	\begin{align*}
	\left[ v_m\right]_{C_p^{s-\varepsilon}(\Omega_m\cap Q_{M/2})} \leq C(M) &\left(\left[(\partial_t + L_{k_m})v_{1,m} \right]_{C^{1-s}_p(Q_M\cap\Omega_m)} \right.
	\\&\left.+ \sup_{R>M}R^{\varepsilon-2s}\left[ v_{1,m}\right]_{C^{\frac{1-s}{2s}}_t(B_R\times(-M^{2s},M^{2s}))} \right.\\
	&\left.+ \sup_{R>1}R^{-s-1+\varepsilon}||v_{1,m}||_{L^\infty(B_R\times(-M^{2s},M^{2s}))} \right),
	\end{align*}
	for every $M\in\N$, for $m$  big enough.
	Let us now show that all the quantities in the right-hand side are bounded uniformly in $m$. First, 
	$$(\partial_t + L_{k_m})v_{1,m}(x,t) = \frac{1}{\theta(r_m)r_m^{1-s-2\varepsilon}} (f_{1,k_m} - q_{r_m,k_m} f_{2,k_m})(r_mx,r_m^{2s}t),$$
	and hence  
	\begin{align*}
	\left[(\partial_t + L_{k_m})v_{1,m} \right]_{C^{1-s}_p(Q_M\cap\Omega_m)} &= \frac{r_m^{2\varepsilon	}}{\theta(r_m)}\left[f_{1,k_m} - q_{r_m,k_m} f_{2,k_m}  \right]_{C^{1-s}_p(Q_{Mr_m\cap\Omega_{k_m}})}\\
	&\leq \frac{C(1+q_{r_m,k_m})}{\theta(r_m)}, 
	\end{align*}
	which is bounded by assumption on the seminorms of $f_{i,k}$ and the fact that $\frac{q_{r,k}}{\theta(r)}\to 0$ as $r\downarrow 0$ uniformly in $k$. 
	The growth term we estimate as follows
	\begin{align*}
	\left[ v_{1,m}\right]_{C^{\frac{1-s}{2s}}_t(B_R\times(-M^{2s},M^{2s}))} & \leq \left[ v_m v_{2,m}\right]_{C^{\frac{1-s}{2s}}_t}\\
	&\leq \left[ v_m \right]_{C^{s-2\varepsilon}_p(Q_R)} R^{s-2\varepsilon -1+s}||v_{2,m}||_{L^\infty} + \left[ v_{2,m} \right]_{C^{\frac{1-s}{2s}}_t}||v_{m}||_{L^\infty(Q_R)}\\
	&\leq R^{1-s} R^{2s-1-2\varepsilon} C(M) R^s + C(M) R^{2s-1} R^{1-2\varepsilon} \\
	&\leq C(M)R^{2s-2\varepsilon}, 
	\end{align*}
	where we used the growth control of $v_m$ from \eqref{eq:gorwthControl} and  $\left[ u_{2,k}\right]_{C^{1-s}_p(B_R\times(-1,1))}\leq R^{2s-1},$ which holds by assumption.
	The last term is estimated similarly
	$$||v_{1,m}||_{L^\infty(B_R\times(-M^{2s},M^{2s}))}\leq  ||v_{m}||_{L^\infty}||v_{2,m}||_{L^\infty}\leq C(M)R^{1-2\varepsilon}R^s.$$
	Putting it all together, we get that $|| v_m||_{L^\infty(\Omega_m\cap Q_{M})}$ and $\left[ v_m\right]_{C_p^{s-\varepsilon}(\Omega_m\cap Q_{M})}$ are uniformly bounded, which implies that $v_m$ converge to some function $v$ in $C^{s-2\varepsilon}_p$ in any compact subset of $\{x_n\geq 0\}.$ Moreover $v_{2,m}$ converges to $c(x_n)_+^s$ in $C^{s-2\varepsilon}_p$ locally in $\R^{n+1}.$ Hence also $v_{1,m}$ converges to $w:=v\cdot(x_n)_+^s$ in $C^{s-2\varepsilon}_p$ locally in $\R^{n+1}.$
	
	We claim that $w$ satisfies the hypothesis of the Liouville theorem \cite[Theorem 2.1]{RV18}. To see this, note that for fixed $h\in\R^n$, $h_n\geq 0$, and $\tau\in\R$ we have
	$$(\partial_t + L_{k_m})(v_{1,m}(x+h,t+\tau) - v_{1,m}(x,t)) = \frac{1}{\theta(r_m)r_m^{1-s}}\left( \hat{f}_{1,k_m} + q_{r_m,k_m}\hat{f}_{2,k_m}\right),$$
	where $\hat{f}_{i,k_m} = f_{i,k_m}(r_m(x+h),r_m^{2s}(t+\tau)) -  f_{i,k_m}(r_mx,r_m^{2s}t)$. But 
	$$|\hat{f}_{i,k_m}|\leq C(h,\tau)r_m^{1-s}\left[ f_{i,k_m}\right]_{C^{1-s}_p(Q_1\cap\Omega_{k_m})},$$ 
	which is bounded by assumption.
	Hence
	$$\left|(\partial_t + L_{k_m})(v_{1,m}(x+h,t+\tau) - v_{1,m}(x,t))\right| \leq \frac{C(1+q_{r_m,k_m})}{\theta(r_m)},$$
	which goes to $0$ as $m\to \infty.$
	Note that the assumption on uniform bounds on $\left[ K_m\right] _{C^{1-s}(\S^{n-1})}$ and $\|K_m\|_{L^\infty(\S^{-1})}$ assure that up to a subsequence $K_m|_{\S^{n-1}}$ converge uniformly.
	Hence we can pass to the limit with \cite[Lemma 3.1]{RV18}, to get
	$$(\partial_t + L_{0})(w(x+h,t+\tau) - w(x,t)) =0,\quad\quad \text{whenever }x_n>0.$$
	Passing the growth control of $v_m$ to the limit gives $\left[ \frac{w}{(x_n)_+^s}\right]_{C^{s-2\varepsilon}(Q_R\cap{x_n>0})}\leq R^{1-s},$ and hence \cite[Theorem 2.1]{RV18} implies that $w=q(x_n)_+^s$, and hence $v = q$. But passing to the limit quantities in \eqref{eq:contradictionQuantities}, we get that $\left[ v\right]_{C^{s-2\varepsilon}_p(Q_1\cap\{x_n>0\})}>\frac{1}{2}$ but also $\int_{Q_1\cap\{x_n>0\}} v = 0$. These contradict each other, since $v$ is a constant function.
	
	To prove the statement in the case when $u_1\not\equiv0$ outside $Q_1$, we take a cut-off function $\chi\in C^\infty_c(B_1)$ which is $1$ in $B_{3/4}$ and apply the already proven case on $\frac{1}{C_1}u_1\chi$ and $u_2$, where $C_1$ is big enough so that $\left[ L(u_1\chi)\right] _{C^{1-s}_p(Q_1)}\leq C_1.$ We can derive the last estimate in the same way as in the proof of Lemma \ref{lem:interiorRegularityWithGrowth} -- equation \eqref{eq:cutoff} and the estimates below. 
\end{proof}

\begin{remark}
	We actually need not work with the decay of the seminorms of the quotient in order to establish expansions of order $1+s$. We nevertheless presented the result, since we believe they might be important for establishing expansions of higher orders and moreover we correct some imprecisions from \cite[Proposition 3.3]{RV18}.
\end{remark}

As a consequence we get $C^{1-\varepsilon}$ regularity of the quotient of two solutions up to the boundary.

\begin{corollary}\label{cor:1-epsRegOfQuotient}
	Let $s\in(\frac{1}{2},1)$. Let $\Omega\subset \R^{n+1}$ be $C^{1}_p$ in $Q_1$. Let $\gamma,\varepsilon>0$. Let $L$ be an operator of the form \eqref{eq:operatorForm}, with kernel $K\in C^{1-s}(\S^{n-1})$. Assume $u_i\in C_p^{\gamma}(Q_1)$, $i\in\{1,2\}$, solve
	$$\left\lbrace\begin{array}{rcll}
	(\partial_t + L)u_i &=& f_i& \text{in }\Omega\cap Q_1\\
	u_i&=&0&\text{in }\Omega^c\cap Q_1,
	\end{array}\right. $$
	with $\left[ f_i\right]_{C^{1-s}_p(\Omega\cap Q_1)}\leq 1$, $\left[ u_i\right] _{C^{\frac{1-s}{2s}}_t(B_R\times(-1,1))}\leq R^{2s-1}$, and $||u_2||_{L^\infty(B_R\times(-1,1))}\leq R^s$ and $||u_1||_{L^\infty(B_R\times(-1,1))}\leq R^{3s-1-\varepsilon}$. Assume also that $u_2\geq c_0 d^s$, for some $c_0>0$.
	
	Then  
	$$\left|\left| \frac{u_1}{u_2}\right|\right| _{C_p^{1-\varepsilon}(\Omega\cap Q_{1/2})}\leq C,$$
	where $C>0$ depends only on $n,s,\varepsilon,c_0,G_0,\|K\|_{C^{1-s}(\S^{n-1})}$ and ellipticity constants.
\end{corollary}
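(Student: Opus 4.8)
Looking at this, Corollary~\ref{cor:1-epsRegOfQuotient} should follow from combining the expansion Proposition~\ref{prop:decay2functions} with interior estimates and a covering argument. Let me sketch how I would prove it.

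\bigskip

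The plan is to deduce the $C^{1-\varepsilon}_p$ bound on $u_1/u_2$ from the pointwise decay statement of Proposition~\ref{prop:decay2functions}, which provides, at every boundary point $(z,t)\in\partial\Omega\cap Q_{1/2}$, a constant $q_{(z,t)}$ with
$$\left[\frac{u_1}{u_2}-q_{(z,t)}\right]_{C^{s-2\varepsilon}_p(\Omega\cap Q_r(z,t))}\leq Cr^{1-s}.$$
First I would check the hypotheses: the growth assumptions on $u_1,u_2$ and the seminorm bounds on $f_i$ are exactly those required, after possibly dividing by a fixed constant, and the boundary is $C^1_p$ as needed; the interior H\"older regularity $u_i\in C^\gamma_p$ ensures we may start the blow-up machinery. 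The constant $q_{(z,t)}$ is the (essentially the trace) value of $u_1/u_2$ at $(z,t)$. By a now-standard argument (comparing $q_{(z,t)}$ at nearby boundary points via the triangle inequality and the decay estimate), one gets $|q_{(z_1,t_1)}-q_{(z_2,t_2)}|\leq C|(z_1,t_1)-(z_2,t_2)|_p^{1-\varepsilon}$, so $q$ is a $C^{1-\varepsilon}_p$ function on $\partial\Omega\cap Q_{1/2}$; together with the local decay this already controls $u_1/u_2$ near the boundary.

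\bigskip

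Next, I would pass to interior estimates to handle the region away from $\partial\Omega$. For a point $(x_0,t_0)\in\Omega\cap Q_{1/2}$ let $(z,t)\in\partial\Omega$ be the closest boundary point and $r\simeq d_x(x_0,t_0)=|x_0-z|$, so that $Q_{cr}(x_0,t_0)\subset\Omega$. On such a cylinder I would use Corollary~\ref{cor:basicQuotientEstimate} (or directly Lemma~\ref{lem:divisionLemma} applied to the expansion $u_1-q_{(z,t)}u_2$ from Proposition~\ref{boundaryHarnackExpansion} together with the interior growth/regularity of $u_2$ from Lemma~\ref{lem:interiorRegularityWithGrowth}) to get a bound of the form
$$r^{s-\varepsilon}\left[\frac{u_1}{u_2}\right]_{C^{s-\varepsilon}_p(Q_{cr}(x_0,t_0))}\leq C,$$
and then an analogous rescaled estimate on the second-order incremental quotient (using the finer Proposition~\ref{prop:decay2functions} on $Q_r(z,t)$ and dividing by $v_{2}$ on the interior cylinder) yields
$$\mathrm{osc}_{Q_{cr}(x_0,t_0)}\frac{u_1}{u_2}\leq Cr^{1-\varepsilon},\qquad r^{1-\varepsilon}\left[\nabla_x\Big(\tfrac{u_1}{u_2}\Big)\right]_{C^{\cdots}_p(Q_{cr}(x_0,t_0))}+\dots\leq C.$$
Summing the scale-invariant estimates as $r=d_x(x_0,t_0)\to 0$ — i.e.\ the standard telescoping/covering argument of the type in \cite[Lemma 2.23]{FR20} or \cite[Lemma A.3]{K21a} — converts these interior bounds, plus the $C^{1-\varepsilon}_p$ regularity of the boundary trace $q$, into the global estimate $\|u_1/u_2\|_{C^{1-\varepsilon}_p(\Omega\cap Q_{1/2})}\leq C$.

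\bigskip

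The main obstacle I expect is the bookkeeping of the exponents and seminorms when passing from the \emph{decay of the quotient seminorm} ($C^{s-2\varepsilon}_p$ seminorm decaying like $r^{1-s}$, so effectively order $1-\varepsilon$) to an actual $C^{1-\varepsilon}_p$ \emph{norm} bound that includes the time-H\"older piece $[u_1/u_2]_{C^{\frac{1-\varepsilon}{2s}}_t}$ and the gradient piece $[\nabla_x(u_1/u_2)]_{C^{-\varepsilon}_p}$ in the right parabolic scaling. One must be careful that the interior estimates on $u_2$ (growth $u_2\geq c_0d^s$ from below and the matching upper regularity) are compatible with dividing, so that $u_1/u_2$ does not blow up — this is exactly what Lemma~\ref{lem:divisionLemma} is designed for, but applying it correctly at every dyadic scale and then summing requires the decay rate $r^{1-s}$ to beat the loss $r^{-s+\varepsilon}$ coming from the interior quotient estimate, which works precisely because $1-s>s-1$... more carefully because $(1-s)-(s-\varepsilon)>-(1-\varepsilon)$ reduces to $s<1$. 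Handling the $u_1\not\equiv 0$ outside $Q_1$ case via the cut-off trick (as at the end of Proposition~\ref{prop:decay2functions}) is routine and I would simply invoke it.
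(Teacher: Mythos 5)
Your plan follows essentially the same route as the paper: starting from Proposition~\ref{prop:decay2functions}, passing to a pointwise expansion at boundary points, upgrading via interior estimates on dyadic cylinders, and dividing with Lemma~\ref{lem:divisionLemma} before covering. Two imprecisions are worth flagging, though neither is fatal. First, Proposition~\ref{prop:decay2functions} produces constants $q_r$ that \emph{a priori} depend on the scale $r$; the passage to a single limiting constant $q_{(z,t)}$, and to the pointwise bound $|u_1-q_{(z,t)}u_2|\leq C|(x,t)-(z,t)|_p^{1+s-\varepsilon}$, is exactly the content of the cited \cite[Proposition 3.4]{RV18}, which the paper invokes explicitly and which your phrase ``now-standard argument comparing $q_{(z,t)}$ at nearby boundary points'' is presumably standing in for — it is worth naming, since the Cauchy-sequence argument for the $q_r$ is the step that converts seminorm decay into an actual boundary expansion. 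Second, in the interior step you initially suggest invoking Proposition~\ref{boundaryHarnackExpansion} or Corollary~\ref{cor:basicQuotientEstimate}; these only give an expansion of order below $2s$ (equivalently, quotient regularity below $s$), which is strictly weaker than $1-\varepsilon$ and would not close the argument. The paper instead uses the finer expansion of order $1+s-\varepsilon$ obtained from Proposition~\ref{prop:decay2functions} together with Proposition~\ref{prop:holderBoundaryEstimate} and Lemma~\ref{lem:interiorRegularityWithGrowth} on the interior cylinder, and only then applies Lemma~\ref{lem:divisionLemma}; you do reach this in the sentence beginning ``and then an analogous rescaled estimate,'' so the earlier references should simply be dropped. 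With those corrections, your outline matches the paper's proof.
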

\begin{proof}
	Proposition \ref{prop:decay2functions} in combination with \cite[Proposition 3.4]{RV18} give that for every $(z,\tau)\in\partial\Omega\cap Q_{1/2}$ there is a constant $q_{(z,\tau)}$ so that
	$$|u_1(x,t) - q_{(z,\tau)}u_2(x,t)|\leq C(|x-z|^{1+s-\varepsilon}+|t-\tau|^{\frac{1+s-\varepsilon}{2s}}). $$
	Analogously as in the proof of Corollary \ref{cor:expansionWithD}, in combination with interior and boundary regularity estimates  (Proposition \ref{prop:holderBoundaryEstimate} and Lemma \ref{lem:interiorRegularityWithGrowth})
	we conclude for any $(x_0,t_0)$, such that $d_p(x_0,t_0) = |(x_0,t_0)-(z,\tau)|_p\leq c_\Omega r$, and that $Q_{2r}(x_0,t_0)\subset\Omega\cap Q_1$ we have 
	$$\left[ u_1 - q_{(z,\tau)}u_2\right]_{C^{1+s-\varepsilon}_p(Q_r(x_0,t_0))}\leq C.$$
	We can apply Lemma \ref{lem:divisionLemma} to get that 
	$$\left[ \frac{u_1}{u_2} \right]_{C^{1-\varepsilon}_p(Q_r(x_0,t_0))}=\left[ \frac{u_1}{u_2} - q_{(z,\tau)}\right]_{C^{1-\varepsilon}_p(Q_r(x_0,t_0))}\leq C,$$
	where Lemma \ref{lem:interiorRegularityWithGrowth} provides the needed interior estimates for $u_2$.
	The result now follows from \cite[Lemma B.2]{K21b}.
\end{proof}

\subsection{Boundary Harnack in $C^\beta_p$ domains}
	
	We now present a result that is required for establishing boundary Harnack inequalities of higher orders. It shows how well we can approximate solutions with $d^s$ near the boundary and in the interior.
	
	\begin{corollary}\label{cor:expansionWithD}
		Let $s\in(\frac{1}{2},1)$. Let $\Omega\subset \R^{n+1}$ be $C^{\beta}_p$ in $Q_1$, for some $\beta > 2s$. Let $\varepsilon>0$. Let $L$ be an operator of the form \eqref{eq:operatorForm}. Assume that $u\in C_p^{\gamma}(\R^n\times (-1,1))$, $\gamma>0$, solves
		$$\left\lbrace\begin{array}{rcll}
		(\partial_t +  L)u &=& f& \text{in }\Omega\cap Q_1\\
		u&=&0&\text{in }\Omega^c\cap Q_1,
		\end{array}\right. $$
		with  $||d^{1-s}f||_{L^\infty(\Omega\cap Q_1)}\leq 1$, $||u||_{L^\infty(B_R\times(-1,1))}\leq R^s$.
		
		Then for every $(z,t_0)\in \partial \Omega \cap Q_{1/2}$ there exists a constant $q_{(z,t_0)}$, so that 
		$$\left[ u(x,t)-q_{(z,t_0)}d^s(x,t)\right]_{C^{s-\varepsilon}_p(Q_r(z,t_0))}\leq C r^{2s-1+\varepsilon},\quad \quad r\in(0,1/2).$$
		The constant $C$ depends only on $\beta,n,s,\varepsilon,G_0$ and ellipticity constants.
		
		Moreover, if additionally $\beta>1+s$ and $\left[ f\right]_{C^{\alpha}_p(\Omega\cap Q_1)}\leq 1$,  $\left[ u\right]_{C^\frac{\alpha}{2s}_t(B_R\times(-1,1))} \leq R^{3s-1-\alpha}$,  for some $\alpha\in(0,s-\varepsilon]$, and  $K\in C^{2\beta+1}(\S^{n-1})$  then for every $(x_0,t_0)\in \Omega\cap Q_{1/2}$, such that $d_x(x_0) = |x_0-z| = c_\Omega r,$ we have 
		\begin{equation*}
		\left[ u - q_{(z,t_0)} d^s \right] _{C^{\alpha+2s}_p(Q_r(x_0,t_0))}\leq Cr^{-1+s-\alpha},
		\end{equation*}
		with $C$ additionally depending on $\|K\|_{C^{2\beta+1}(\S^{n-1})}.$
	\end{corollary}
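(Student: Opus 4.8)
The strategy is to regard $d^s$ as an \emph{approximate} solution of the equation and to compare $u$ with $q_{(z,t_0)}d^s$ by invoking the order-$2s$ boundary expansion of Proposition~\ref{boundaryHarnackExpansion}, upgrading the order of regularity in the interior afterwards.

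\textbf{Step 1 (the first estimate).} First I would check that $d^s$ is an admissible ``denominator''. By the estimates of Section~\ref{sec:Lds} (this is exactly what that section is for), together with the fact that $\beta>2s$ forces $\partial_t d\in L^\infty(\Omega\cap Q_1)$ and hence $|\partial_t d^s|=s\,d^{s-1}|\partial_t d|\le C d^{s-1}$, the function $d^s$ solves $(\partial_t+L)d^s=g$ in $\Omega\cap Q_1$ with $\|d^{1-s}g\|_{L^\infty(\Omega\cap Q_1)}\le C$; also $d^s\ge c_0 d^s$ trivially and, after a harmless cutoff of $d^s$ far from $Q_1$, $\|d^s\|_{L^\infty(B_R\times(-1,1))}\le CR^s\le CR^{3s-1}$. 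Since $s>\tfrac12$ one also has $\|u\|_{L^\infty(B_R\times(-1,1))}\le R^s\le R^{3s-1}$, and $\|d^{1-s}f\|_{L^\infty}\le1$ is the hypothesis. Hence, after normalising $u$ and $d^s$ by fixed constants, the pair $(u_1,u_2)=(u,d^s)$ satisfies the assumptions of Proposition~\ref{boundaryHarnackExpansion} with $\rho=1-s\in(0,s)$. That proposition produces, for each $(z,t_0)\in\partial\Omega\cap Q_{1/2}$, a constant $q_{(z,t_0)}$ (bounded uniformly in $(z,t_0)$ by \cite[Lemma B.5]{K21b}) with $|u(x,t)-q_{(z,t_0)}d^s(x,t)|\le C(|x-z|+|t-t_0|^{1/(2s)})^{3s-1}$ near $(z,t_0)$, together with the interior bound $[u-q_{(z,t_0)}d^s]_{C^{3s-1}_p(Q_r(x_0,t_0))}\le C$ whenever $d_x(x_0)=|x_0-z|=c_\Omega r$. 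Feeding this pointwise decay and these interior seminorm bounds into the gluing lemma \cite[Lemma A.3]{K21a} (exactly as in the ``moreover'' part of Proposition~\ref{boundaryHarnackExpansion}) gives $[u-q_{(z,t_0)}d^s]_{C^{\theta}_p(Q_r(z,t_0))}\le Cr^{3s-1-\theta}$ for every $\theta\in(0,3s-1]$; choosing $\theta=s-\varepsilon$ (admissible since $3s-1>s-\varepsilon$ for $s>\tfrac12$) yields the first assertion.

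\textbf{Step 2 (the ``moreover'' estimate).} For the higher-order expansion I would bootstrap the interior regularity of $w:=u-q_{(z,t_0)}d^s$. Fix $(x_0,t_0)$ with $d_x(x_0)=|x_0-z|=c_\Omega r$, set $\tilde w(x,t)=w(x_0+rx,t_0+r^{2s}t)$, so that $(\partial_t+\tilde L)\tilde w=\tilde h$ in $Q_1$ with $\tilde L$ of the form \eqref{eq:operatorForm} and $\tilde h(x,t)=r^{2s}(f-q_{(z,t_0)}g)(x_0+rx,t_0+r^{2s}t)$. From Step~1, applied on the boundary cube $Q_{Cr}(z,t_0)\supset Q_r(x_0,t_0)$, we get $\|\tilde w\|_{L^\infty(Q_1)}\le Cr^{3s-1}$; and since $[f]_{C^\alpha_p(\Omega\cap Q_1)}\le1$ while Section~\ref{sec:Lds} (now using $\beta>1+s$ and $K\in C^{2\beta+1}(\S^{n-1})$) gives $[g]_{C^\alpha_p(Q_r(x_0,t_0))}\le Cr^{s-1-\alpha}$, we obtain $[\tilde h]_{C^\alpha_p(Q_1)}\le Cr^{3s-1}$. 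Applying the interior estimate of order $\alpha+2s$ (Lemma~\ref{lem:interiorRegularityWithGrowth}) to $\tilde w$ then bounds $[\tilde w]_{C^{\alpha+2s}_p(Q_{1/2})}$ by $Cr^{3s-1}$ plus a nonlocal tail contribution; undoing the rescaling and using a covering to pass from $Q_{r/2}(x_0,t_0)$ to $Q_r(x_0,t_0)$ gives $[w]_{C^{\alpha+2s}_p(Q_r(x_0,t_0))}\le Cr^{-(\alpha+2s)}r^{3s-1}=Cr^{-1+s-\alpha}$, which is the claim.

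\textbf{Main obstacles.} The two delicate inputs, both of which I would isolate and quote rather than redo here, are: (i) the weighted estimates for $g=(\partial_t+L)d^s$, namely $\|d^{1-s}g\|_{L^\infty}\le C$ and $[g]_{C^\alpha_p(Q_r(x_0,t_0))}\le Cr^{s-1-\alpha}$ --- this is precisely why one needs $\beta>2s$ (resp.\ $\beta>1+s$) and $K\in C^{2\beta+1}(\S^{n-1})$, and it is the content of Section~\ref{sec:Lds}; and (ii) the nonlocal tail term in the order-$(\alpha+2s)$ interior estimate, which, because $\alpha+2s>2s$, requires global-in-time Hölder control of $\tilde w$ (cf.\ Remark~\ref{rem:1} and \cite[Lemma 7.1]{FR17}). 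The latter is exactly why the hypotheses of the corollary are stated globally in space, with the growths $\|u\|_{L^\infty(B_R\times(-1,1))}\le R^s$ and $[u]_{C^{\alpha/2s}_t(B_R\times(-1,1))}\le R^{3s-1-\alpha}$; combined with the matching bounds for $d^s$ (coming from $d\in C^\beta_p$) they make the weighted integral defining the tail converge and bound it by $\lesssim r^{3s-1}$. Checking that this weighted sum of the $R$-scale time-Hölder growths of $w$ indeed closes at the exponent $3s-1$ is the one genuinely computational point, and it is where I would be most careful.
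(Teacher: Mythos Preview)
Your approach is essentially the same as the paper's: verify that $d^s$ is an admissible comparison function via the estimates of Section~\ref{sec:Lds}, apply Proposition~\ref{boundaryHarnackExpansion} (equivalently Corollary~\ref{cor:basicQuotientEstimate}) with $\rho=1-s$ to obtain the $(3s-1)$-order pointwise decay of $u-q_{(z,t_0)}d^s$, and for the ``moreover'' part rescale and apply the interior estimate Lemma~\ref{lem:interiorRegularityWithGrowth}, bounding the right-hand side via Proposition~\ref{prop:Lds} and the tail via the global time-H\"older growth hypothesis on $u$.

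The one genuine difference is in how you pass from the pointwise decay to the boundary seminorm $[\,u-q_{(z,t_0)}d^s\,]_{C^{s-\varepsilon}_p(Q_r(z,t_0))}$. You invoke a gluing lemma \cite[Lemma~A.3]{K21a} ``exactly as in the moreover part of Proposition~\ref{boundaryHarnackExpansion}''; but note that that moreover part only yields a seminorm bound on \emph{interior} cubes $Q_r(x_0,t_0)$, not on boundary cubes $Q_r(z,t_0)$, so the cited lemma does not literally do the job. The paper instead applies the rescaled \emph{boundary} H\"older estimate (Proposition~\ref{prop:holderBoundaryEstimate}) directly to $u-q_{(z,t_0)}d^s$ on $Q_{2r}(z,t_0)$: since this function vanishes on $\Omega^c\cap Q_1$, solves an equation with right-hand side bounded by $Cd^{-s}$, and has $L^\infty$ norm $\le C(Rr)^{3s-1}$ on $Q_{Rr}(z,t_0)$ for all $R>0$, the rescaled estimate gives the desired $Cr^{2s-1+\varepsilon}$ bound in one stroke. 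This is cleaner than gluing interior seminorms across a Whitney cover, though your route can also be made to work (e.g.\ via \cite[Lemma~B.2]{K21b}).
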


	\begin{proof}
		From Corollary \ref{cor:basicQuotientEstimate}, and the fact that $(\partial_t+L)d^s$ is bounded by $d^{s-1}$ (see \cite[Lemma 3.2]{K21a}) we get
		$$\left[ \frac{u}{d^s}\right] _{C^{2s-1}_p(\Omega\cap Q_{3/4})}\leq C,\quad\quad|u(x,t)-q_{(z,t_0)}d^s(x,t)|\leq C \left(|x-z|+|t-t_0|^{\frac{1}{2s}}\right)^{3s-1}.$$
		Moreover Corollary \ref{cor:CsRegularity} gives that $u\in C^s_p(Q_{3/4}).$
		Since $u$ and $d^s$ grow at most as power $s$ at infinity, the second estimate implies that $||u-q_{(z,t_0)}d^s||_{L^\infty(Q_r(z,t_0))}\leq Cr^{3s-1}$ for all $r>0$. Using rescaled Proposition \ref{prop:holderBoundaryEstimate} gives
		\begin{align*}
			\left[ u(x,t)-q_{(z,t_0)}d^s(x,t)\right]_{C^{s-\varepsilon}_p(Q_r(z,t_0))}\leq& Cr^{-s+\varepsilon}\Big(  r^{2s}||d^s(f + (\partial_t + L)d^s)||_{L^\infty(Q_{2r}(z,t_0))} \\
			&\quad\quad + \sup_{R>1}R^{-2s+\varepsilon} ||u-q_{(z,t_0)}d^s||_{L^\infty(Q_{Rr}(z,t_0))}\Big)  \\
			\leq & C r^{2s-1+\varepsilon},
		\end{align*}
		whenever $r<\frac{1}{2}$. 
		
		To prove the moreover case, we use Lemma \ref{lem:interiorRegularityWithGrowth}, which gives
		\begin{align*}
			\left[ u - q_{(z,t_0)} d^s \right]_{C^{\alpha+2s}_p(Q_r(x_0,t_0))}\leq C&r^{-\alpha-2s}\Big( r^{\alpha+2s}\left[f - q_{(z,t_0)}(\partial_t+L)d^s \right]_{C^{\alpha}_p(Q_{2r}(x_0,t_0))}  \\
			& +\sup_{R>1}R^{-2s-\alpha+\varepsilon} ||u - q_{(z,t_0)} d^s ||_{L^\infty(Q_{2(R+2)r}(x_0,t_0))}  \\
			&+  \sup_{R>1}R^{-2s+\varepsilon}r^{\alpha}\left[ u - q_{(z,t_0)} d^s \right]_{C^{\alpha}_p(Q_{2r(R+2)})} \Big).
		\end{align*}
		Lemma \ref{lem:generalisedDistanceExsistence} and Proposition \ref{prop:Lds} assure that 
		$$\left[(\partial_t+L)d^s \right]_{C^{s-\varepsilon}_p(Q_{2r}(x_0,t_0))}\leq Cr^{-1+\varepsilon},$$
		while above we established that 
		$$||u - q_{(z,t_0)} d^s ||_{L^\infty(Q_{2(R+2)r}(x_0,t_0))}\leq C ((R+2)r)^{3s-1}.$$
		Finally, combining the already proven estimate  with the assumptions on $u$, we get
		$$\left[ u(x,t)-q_{(z,t_0)}d^s(x,t)\right]_{C^{\frac{\alpha}{2s}}_t(Q_r(z,t_0))}\leq C r^{3s-1-\alpha},$$
		for all $r>0$, which implies
		$$\left[ u - q_{(z,t_0)} d^s \right]_{C^{\frac{\alpha}{2s}}_t(Q_{2r(R+2)})}\leq C (r(R+2))^{3s-1-\alpha}.$$
		We conclude that 
		$$\left[ u - q_{(z,t_0)} d^s \right]_{C^{\alpha+2s}_p(Q_r(x_0,t_0))}\leq Cr^{-1+s-\alpha},$$
		which proves the claim.
	\end{proof}

	In order to get the expansions for two solutions of orders exceeding $1+s$, we prove the decay of seminorms of the expansion. Since the decay rate is transformed in the growth of the blow up sequence, it can not be taken larger than $2s$. In combination with the $C^s$ nature of solutions, with this strategy we can only achieve expansions of order $3s$. The precise version of the claim is stated below.

\begin{proposition}\label{prop:expansion2functions3s}
	Let $s\in(\frac{1}{2},1)$. Let $\Omega\subset \R^{n+1}$ be $C^{\beta}_p$ in $Q_1$, for some $\beta>1+s $. 
	Let $\varepsilon>0$ and $\gamma>0$. Let $L$ be an operator of the form \eqref{eq:operatorForm}, with kernel $K\in C^{2\beta+1}(\S^{n-1})$. Assume $u_i\in C_p^{\gamma}(Q_1)$, $i\in\{1,2\}$, solve
	$$\left\lbrace\begin{array}{rcll}
	(\partial_t + L)u_i &=& f_i& \text{in }\Omega\cap Q_1\\
	u_i&=&0&\text{in }\Omega^c\cap Q_1,
	\end{array}\right. $$
	with $\left[ f_i\right]_{C^{\alpha}_p(\Omega\cap Q_1)}\leq 1$, $||u_i||_{L^\infty(B_R\times(-1,1))}\leq R^{2s+\alpha-\varepsilon}$ and $\left[ u_i\right]_{C^{\frac{\alpha}{2s}}_t(B_R\times(-1,1))}\leq R^{2s-\varepsilon}$, for some $\alpha\in(1-s,s-\varepsilon ] $.
	Assume also that $u_2\geq c_0 d^s$, for some $c_0>0$.
	
	Then for every $(z,\tau)\in\partial\Omega\cap Q_{1/2}$ there are $q_{(z,\tau)}\in \R$ and $Q_{(z,\tau)}$ a polynomial of degree $1$ in $\R^n$ with $Q_{(z,\tau)}(z,\tau)=0$, so that
	$$\left[ u_1 - q_{(z,\tau)} u_2 - Q_{(z,\tau)}  d^s \right]_{C^{\alpha}_p(Q_r(z,\tau))}\leq Cr^{2s-\varepsilon}.$$
	The constant $C>0$ depends only on $n,s,\varepsilon,c_0,G_0$ and ellipticity constants.
\end{proposition}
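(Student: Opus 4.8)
The plan is to prove this by a contradiction–blow-up argument of the same flavour as Proposition \ref{prop:decay2functions}, but now carrying along the expansion $u_1 - q u_2 - Q\,d^s$ rather than just $u_1 - q u_2$. By translation and scaling invariance it suffices to treat $(z,\tau)=(0,0)$, and by the usual cut-off trick (as at the end of the proof of Proposition \ref{prop:decay2functions}) one may assume $u_1$ is supported in $Q_1$. For each scale $r>0$ I would choose $(q_{r},Q_{r})$ by an $L^2$-projection: pick $q_r\in\R$ and $Q_r$ a degree-$1$ polynomial with $Q_r(0,0)=0$ minimizing $\int_{Q_r\cap\Omega}(u_1-q u_2-Q\,d^s)^2$, so that $u_1-q_r u_2-Q_r d^s$ is $L^2(Q_r\cap\Omega)$-orthogonal to $u_2$ and to $x_i d^s$, $i=1,\dots,n$. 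Then set
$$\theta(r)=\sup_k\sup_{\rho>r}\rho^{-(2s-\varepsilon)}\bigl[u_{1,k}-q_{\rho,k}u_{2,k}-Q_{\rho,k}d_k^s\bigr]_{C^\alpha_p(\Omega_k\cap Q_\rho)},$$
which is finite for each $r>0$ by the $C^{\alpha}$ estimate coming from the expansion of order $1+s$ established in Proposition \ref{prop:decay2functions} (applied together with interior estimates, Lemma \ref{lem:interiorRegularityWithGrowth}), monotone, and $\to\infty$ as $r\downarrow0$ by the contradiction hypothesis. Select $k_m,r_m$ nearly realizing $\theta(r_m)$, rescale, and define the blow-up $v_m$ from $u_{1,k_m}-q_{r_m,k_m}u_{2,k_m}-Q_{r_m,k_m}d_{k_m}^s$ normalised by $\theta(r_m)r_m^{2s-\varepsilon}$; also rescale $v_{2,m}=r_m^{-s}u_{2,k_m}(r_m\,\cdot)$ so that $v_m$ is an affine-corrected quotient divided by $v_{2,m}$.

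The heart of the argument is then three things. First, the growth control: the definition of $\theta$ gives $[v_m]_{C^\alpha_p(\Omega_m\cap Q_R)}\le R^{2s-\varepsilon}$, and combined with the $L^2$-orthogonality (which controls $\|v_m\|_{L^\infty(Q_1)}$) this upgrades to $\|v_m\|_{L^\infty(Q_R\cap\Omega_m)}\le CR^{2s-\varepsilon}$. Second, one must show $q_{r,k}/\theta(r)\to0$ and, crucially, also that the rescaled coefficients of $Q_{r,k}/\theta(r)$ converge to $0$: this is the step that requires care, because $d^s$ grows and the polynomial $Q$ can absorb some of the expansion. As in \cite[Proposition 4.1]{AR20} (and Proposition \ref{prop:decay2functions}) one estimates $|q_{r,k}-q_{2r,k}|$ and the increment of the linear part of $Q$ using interior Harnack-type bounds for $u_{2,k}$ near the boundary together with $u_{2,k}\ge c_0 d^s$ and $[(\partial_t+L)d^s]$ controlled by $d^{s-1}$ (via \cite[Lemma 3.2]{K21a} / Proposition \ref{prop:Lds}), getting a bound $\le C\theta(r)r^{\,?}$ with a positive power, and sums a geometric-type series. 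Third, passing to the limit: using Corollary \ref{cor:expansionWithD} (the moreover part, which needs $\beta>1+s$ and $K\in C^{2\beta+1}(\S^{n-1})$ — exactly the hypotheses here) together with Corollary \ref{cor:advancedQuotientEstimate} on rescaled balls $Q_M$, all the right-hand side quantities $[(\partial_t+L)v_{1,m}]_{C^\alpha_p(Q_M)}$, the $C^{\alpha/2s}_t$ growth term, and the $L^\infty$ growth term are bounded uniformly in $m$ (here $v_{1,m}=u_{1,k_m}-q_{r_m,k_m}u_{2,k_m}-Q_{r_m,k_m}d_{k_m}^s$, rescaled), so $v_m\to v$ in $C^\alpha_p$ locally in $\{x_n\ge0\}$, $v_{2,m}\to c(x_n)_+^s$, and $v_{1,m}\to w:=v\cdot(x_n)_+^s$ locally.

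Then I would verify that $w$ satisfies the hypotheses of the Liouville theorem for the half-space \cite[Theorem 2.1]{RV18}: for fixed increments $(h,\tau)$ with $h_n\ge0$ one checks $(\partial_t+L_0)\bigl(w(x+h,t+\tau)-w(x,t)\bigr)=0$ in $\{x_n>0\}$ by passing the equation for the differenced $v_{1,m}$ to the limit (the right-hand side is $O((1+|q_{r_m,k_m}|+|Q_{r_m,k_m}|)/\theta(r_m))\to0$ using $[f_{i,k}]_{C^\alpha_p}\le1$), and the growth control passes to $[w/(x_n)_+^s]_{C^\alpha_p(Q_R\cap\{x_n>0\})}\le R^{2s-\varepsilon}$. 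The Liouville theorem then forces $w/(x_n)_+^s$ to be an affine function of $x$, i.e. $v=q+\ell(x)$ with $\ell$ linear. But the $L^2$-orthogonality of $v_m$ to $1$ and to $x_i$ on $Q_1\cap\Omega_m$ passes to the limit, forcing $q=0$ and $\ell=0$, i.e. $v\equiv0$, contradicting $[v_m]_{C^\alpha_p(\Omega_m\cap Q_1)}\ge\frac12$ (which survives the limit). The main obstacle I anticipate is precisely the second step — showing that the rescaled polynomial coefficients $Q_{r,k}/\theta(r)$ vanish in the limit and that the blow-up of $Q_{r,k}d_k^s$ does not overwhelm the growth budget $2s-\varepsilon$; this needs the interior estimates for $d^s$ and its derivatives (Proposition \ref{prop:Lds}, Lemma \ref{lem:generalisedDistanceExsistence}) and the careful telescoping estimate for the increments of $q_{r,k}$ and of the linear part of $Q_{r,k}$ across dyadic scales, exactly paralleling \cite[Proposition 3.3]{RV18} and \cite[Proposition 4.1]{AR20} but with the extra $d^s$-modulated polynomial term present.
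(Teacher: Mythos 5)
Your proposal follows the paper's proof in all essentials: the $L^2$-projection onto $\R u_{2,k}+\mathbf{P}d_k^s$, the monotone quantity $\theta(r)$, the telescoping decay of $q_{r,k}/\theta(r)$ and of the linear part of $Q_{r,k}/\theta(r)$, the blow-up with growth control $[v_m]_{C^\alpha_p(Q_R)}\lesssim R^{2s-\varepsilon}$, uniform H\"older estimates, passage to the limit, the incremental-difference Liouville theorem, and the final contradiction via the $L^2$-orthogonality. You also correctly identify where the hypotheses $\beta>1+s$ and $K\in C^{2\beta+1}(\S^{n-1})$ enter (through Proposition \ref{prop:Lds}/Lemma \ref{lem:generalisedDistanceExsistence}) and that the fact $\theta(r)\to\infty$ is the nontrivial step coming from the telescoping (the paper isolates this in Lemma \ref{lem:fakeLemma4.5}).

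The places where your sketch departs from (or muddles) the paper's proof are these. First, a scaling slip: to make $[v_m]_{C^\alpha_p(Q_1)}\sim1$ one must normalize by $\theta(r_m)r_m^{\alpha+2s-\varepsilon}$, not $\theta(r_m)r_m^{2s-\varepsilon}$, since $[f(r\cdot)]_{C^\alpha_p(Q_1)}=r^\alpha[f]_{C^\alpha_p(Q_r)}$. Second, and more substantively, you re-introduce the quotient $v_m=v_{1,m}/v_{2,m}$ and invoke Corollary \ref{cor:advancedQuotientEstimate} together with Corollary \ref{cor:expansionWithD} to obtain compactness, which is the machinery of Proposition \ref{prop:decay2functions}. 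In the paper's proof of this proposition there is no quotient at all: the blow-up $v_m$ is the rescaled $u_{1,k_m}-q_{r_m,k_m}u_{2,k_m}-Q_{r_m,k_m}d_{k_m}^s$ itself, $\|v_m\|_{L^\infty(Q_2)}$ is controlled by $v_m(0,0)=0$ plus the seminorm growth \eqref{eq:growthControlOfBlowUpSequence}, and the uniform $C^{\alpha+\varepsilon/2}_p(Q_1)$ bound comes from Corollary \ref{cor:advancedBoundaryReg} applied to $v_m$ directly, after checking that the RHS of the rescaled equation (which now contains $\partial_t(Qd^s)$ and $L(Qd^s)$ terms) decays — this is where Proposition \ref{prop:Lds} is used. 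Your quotient route would need the growth and compactness bounds translated from $v_m$ to $v_m/v_{2,m}$ and back, which is an extra (and avoidable) layer of estimates; it is also the source of the notational inconsistency in your write-up, where $v_m$ is first the rescaled function and later treated as the quotient that converges to $v$ with $v_{1,m}\to v\cdot(x_n)_+^s$. These are fixable, but the paper's direct argument via Corollary \ref{cor:advancedBoundaryReg} is cleaner and is the route you should follow.
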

\begin{proof}
	Thanks to the assumption on the domain, we can assume that $(z,\tau)=(0,0)$. We can also assume $u_i = 0$ outside of $Q_1$, $u\in C^{s}_p(\R^n\times(-1,1))$,\footnote{We achieve it with multiplying it with a smooth cut off, which does not change the right-hand side too much.} thanks to the growth control, global regularity in time and regularity of the kernel (see Lemma \ref{lem:interiorRegularityWithGrowth}). We argue by contradiction. Assume that there exist $\Omega_k,L_k,u{i,k},f_{i,k}$, $i=1,2$, as in the statement, so that 
	$$\sup_k\sup_{r>0} r^{-2s+\varepsilon}\left[ u_{1,k} - q_k u_{2,k} - Q_k  d_k^s \right]_{C^{\alpha}_p(Q_r)} = \infty,$$
	for any $q_k\in \R$ and $Q_k$ $1-$homogeneous polynomial. We define $q_{r,k},Q_{r,k}$ as the coefficients of $L^2(Q_r)$ projection of $u_{1,k}$ to $\R u_{2,k} + \textbf{P}d_k^s$, where $\textbf{P}$ stands for all $1$-homogeneous polynomials. Therefore we have
	$$\int_{Q_r}(u_{1,k}-q_{r,k}u_{2,k} - Q_{r,k}d_k^s)(qu_{2,k}+ Qd_k^s) = 0,\quad\quad \text{for all }q\in\R,\text{ } Q\in\textbf{P}.$$
	Furthermore we define a monotone quantity
	$$\theta(r) = \sup_k\sup_{\rho>r} \rho^{-2s+\varepsilon}\left[ u_{1,k} - q_{\rho,k} u_{2,k} - Q_{\rho,k}  d_k^s \right]_{C^{\alpha}_p(Q_\rho)}.$$
	From Lemma \ref{lem:fakeLemma4.5}  
	we deduce that $\theta(r)\to\infty$ as $r\downarrow0$.
	Hence we can get a sequences $r_m\downarrow0$ and $k_m$, for $m\in\N$, so that
	$$\frac{m}{4}\leq \frac{\theta(r_m)}{2} \leq r_m^{-2s+\varepsilon}\left[ u_{1,k_m} - q_{r_m,k_m} u_{2,k_m} - Q_{r_m,k_m}  d_{k_m}^s \right]_{C^{\alpha}_p(Q_{r_m})}\leq \theta(r_m).$$
	We define the blow-up sequence
	\begin{equation}\label{eq:contradictionProperties}
		v_m(x,t) = \frac{1}{\theta(r_m)r_m^{\alpha+2s-\varepsilon}}\left(u_{1,k_m} - q_{r_m,k_m}u_{2,k_m} - Q_{r_m,k_m}d_{k_m}^s\right)(r_mx,r_m^{2s}t).
	\end{equation}
	By definition of $q_{r,k}$ and choice of $r_m,k_m$ we have
	$$\int_{Q_1} v_m (qu_{2,m} - Qd_m^s) = 0,\quad\quad \frac{1}{2}\leq\left[ v_m\right]_{C^{\alpha}_p(Q_1)}\leq 1,$$
	for any $q\in\R$ and $Q\in \textbf{P}.$ 
	
	Next we want to obtain growth control near infinity for the blow-up sequence. In this direction we estimate the decay of coefficients of $Q_{r,k}$ and $q_{r,k}$ at zero. We denote $Q_{r,k}(x)=Q_{r,k}\cdot x,$ and proceed in the following way. First, by  rescaled \cite[Lemma A.10]{AR20}
	\begin{align*}
		|Q_{r,k}-Q_{2r,k}|\leq &  Cr^{-1} ||(Q_{r,k}-Q_{2r,k})||_{L^\infty(Q_r\cap\{d_k>r/2\})}\\ 
		\leq & Cr^{-1-s} ||Q_{r,k}d_k^s-Q_{2r,k}d_k^s||_{L^\infty(Q_r\cap\{d_k>r/2\})}\\
		\leq &Cr^{-1-s+\alpha} \left[ Q_{r,k}d_k^s-Q_{2r,k}d_k^s\right] _{C_p^\alpha(Q_r)}\\
		\leq & Cr^{-1-s+\alpha} \left( \left[ u_{1,k} - q_{r,k}u_{2,k} - Q_{r,k} d_k^s\right] _{C^{\alpha}_p(Q_r)} \right.\\
		&\left.\quad + \left[ u_{1,k} - q_{2r,k}u_{2,k} - Q_{2r,k} d_k^s\right] _{C^{\alpha}_p(Q_{2r})} \right.\\
		&\left.\quad + \left[ q_{r,k}u_{2,k} - q_{2r,k}u_{2,k} \right] _{C^{\alpha}_p(Q_{r})} \right)\\
		\leq & Cr^{-1-s+\alpha} \left( \theta(r)r^{2s-\varepsilon} + \theta(2r)(2r)^{2s-\varepsilon} +|q_{r,k}-q_{2r,k}|r^{s-\alpha} \right)\\
		\leq & C\left( \theta(r) r^{s+\alpha-1-\varepsilon} + |q_{r,k}-q_{2r,k}|r^{-1} \right),
	\end{align*}
	where we used that  $\left[ u_{2,k}\right]_{C^s_p}(Q_1)\leq C, $ see Corollary \ref{cor:CsRegularity}. Moreover from the definition of $\theta$ and the triangle inequality we get
	$$ \left[(q_{r,k}-q_{2r,k}) u_{2,k} +  (Q_{r,k}-Q_{2r,k})\cdot x d_k^s\right]_{C^{\alpha}_p(Q_r)}\leq C\theta(r)r^{2s-\varepsilon},$$
	which furthermore implies
	\begin{align*}
		||(q_{r,k}-q_{2r,k}) u_{2,k} +  (Q_{r,k}-Q_{2r,k})&\cdot x d_k^s||_{L^{\infty}(Q_r\cap\{d_k<r/2\})}\leq \\
		&\leq ||(q_{r,k}-q_{2r,k}) u_{2,k} +  (Q_{r,k}-Q_{2r,k})\cdot x d_k^s||_{L^{\infty}(Q_r)}\\
		&\leq r^{\alpha} \left[(q_{r,k}-q_{2r,k}) u_{2,k} +  (Q_{r,k}-Q_{2r,k})\cdot x d_k^s\right]_{C^{\alpha}_p(Q_r)}\\
		&\leq C\theta(r)r^{2s+\alpha-\varepsilon},
	\end{align*}
	and hence 
	$$\left|\left|(q_{r,k}-q_{2r,k}) \frac{u_{2,k}}{d_k^s} +  (Q_{r,k}-Q_{2r,k})\cdot x \right|\right|_{L^{\infty}(Q_r\cap\{d_k<r/2\})}\leq C\theta(r)r^{s+\alpha-\varepsilon}.$$
	It follows from \cite[Lemma A.11]{AR20} that
	$$|q_{r,k}-q_{2r,k}|\leq C\theta(r)r^{s+\alpha-\varepsilon},$$
	which implies  
	$$|Q_{r,k}-Q_{2r,k}|\leq C\theta(r)r^{s+\alpha-1-\varepsilon}.$$
	In the same way as in \cite[Proposition 4.4]{AR20} we deduce that 
	\begin{align*}
		|q_{r,k}-q_{Rr,k}|\leq C\theta(r)(Rr)^{s+\alpha-\varepsilon}\\
		|Q_{r,k}-Q_{Rr,k}|\leq C\theta(r)(Rr)^{s+\alpha-1-\varepsilon}\\
		\frac{|q_{r,k}|+|Q_{r,k}|}{\theta(r)}\downarrow 0,\quad\text{ uniformly in }k.
	\end{align*}
	This implies the growth control of the blow-up sequence
	\begin{align}\label{eq:growthControlOfBlowUpSequence}
	\begin{split}
		\left[ v_m\right]_{C^{\alpha}_p(Q_R)}\leq& \frac{r_m^{\alpha}}{\theta(r_m)r_m^{\alpha+2s-\varepsilon}} \left( \theta(Rr_m)(Rr_m)^{2s-\varepsilon} + |q_{r_m,k_m}-q_{Rr_m,k_m}| \left[ u_{2,k_m}\right]_{C^{\alpha}(Q_{Rr_m})} \right.\\
		&\quad\quad\quad\quad+\left. |Q_{r_m,k_m}-Q_{Rr_m,k_m}| \left[ xd_{k_m}^s\right]_{C^{\alpha}(Q_{Rr_m})} \right)\\
		\leq& \frac{Cr_m^{\alpha}}{\theta(r_m)r_m^{\alpha+2s-\varepsilon}} \left( \theta(r_m)(Rr_m)^{2s-\varepsilon} + \theta(r_m)(Rr_m)^{\alpha +s-\varepsilon} (Rr_m)^{s-\alpha} \right.\\
		&\quad\quad\quad\quad+\left. \theta(r_m)(Rr_m)^{s+\alpha-1-\varepsilon} (Rr_m)^{1+s-\alpha} \right)\\
		\leq & C R^{2s-\varepsilon}.
	\end{split}
	\end{align}
	Note that the estimate is valid for all $R>1$,\footnote{When $R>r_m^{-1}$, then $Q_R$ has to be intersected with $\R^n\cap (-1,1)$.}
	and that we used $\left[ u_{2,k_m}\right]_{C^s_p(\R^n\times(-1,1))}\leq 1$.
	
	Denoting $\Omega_m = \{(x,t); \text{ } (r_mx,r_m^{2s}t)\in\Omega_{k_m} \}$, we have that $v_m$ solves
	\begin{align*}
		(\partial_t + L_{k_m})v_m(x,t) = \frac{1}{\theta(r_m)r_m^{\alpha-\varepsilon}}&\big(f_{1,k_m}- q_{r_m,k_m}f_{2,k_m} 
		- sQ_{r_m,k_m}d_{k_m}^{s-1}\partial_td_{k_m}\\
		&  - L_{k_m}(Q_{r_m,k_m}d_{k_m}^s) \big)(r_mx,r_m^{2s}t)
	\end{align*}
	in $\Omega_m\cap Q_{r_m^{-1}}$, as well as 
	$$	v_m=0, \quad\quad \text{ in }\Omega_m^c\cap Q_{r_m^{-1}}.$$
	By assumption we can bound
	\begin{align*}
		\frac{1}{\theta(r_m)r_m^{\alpha-\varepsilon}} \left[f_{1,k_m}(r_m\cdot, r_m^{2s}\cdot) \right]_{C^{\beta-1-s}_p(\Omega_m\cap Q_{2})}\leq& \text{ } C\frac{r_m^{\beta-1-s}}{\theta(r_{k_m})r_m^{s-\varepsilon}}
		\left[f_{1,k_m} \right]_{C^{\beta-1-s}_p(\Omega_{k_m}\cap Q_{2r_m})}\\
		\leq &\text{ } C\frac{1}{\theta(r_{k_m})}
		\left[f_{1,k_m} \right]_{C^{\alpha-\varepsilon}_p(\Omega_{k_m}\cap Q_{2r_m})}\\
		\leq &\text{ } C\frac{1}{\theta(r_{k_m})}
	\end{align*}
	which goes to zero and in particular it is bounded for all $m$. Bounding the term with $f_{2,k_m}$, we obtain $C\frac{|q_{r_m,k_m}|}{\theta(r_m)}$ which also converges to zero.
	We proceed with estimating
	\begin{align*}
		\left|\frac{1}{\theta(r_m)r_m^{\alpha-\varepsilon}} Q_{r_m,k_m}d_{k_m}^{s-1}\partial_td_{k_m} (r_mx,r_m^{2s}t)\right| &\leq \frac{C|Q_{r_m,k_m}|}{\theta(r_m)r_m^{\alpha-\varepsilon}}|r_mx|r_m^{s-1} d_m^{s-1}(x,t)\\
		&\leq C\frac{|Q_{r_m,k_m}|}{\theta(r_m)}d_m^{s-1}(x,t),
	\end{align*}
	where $d_m$ stands for the distance function in $\Omega_m$. Finally by Proposition \ref{prop:Lds} we also get that
	\begin{align*}
		\frac{1}{\theta(r_m)r_m^{\alpha-\varepsilon}}&\left[ L_{k_m}(Q_{r_m,k_m}d_{k_m}^s)(r_m\cdot,r_m^{2s}\cdot)\right]_{C^{\beta-1-s}_p(\Omega_m\cap Q_{2})}\leq\\
		&\leq \frac{r_m^{\beta-1-s}}{\theta(r_m)r_m^{\alpha-\varepsilon}}\left[ L_{k_m}(Q_{r_m,k_m}d_{k_m}^s)\right]_{C^{\beta-1-s}_p(\Omega_{k_m}\cap Q_{2r_m})} \\
		&\leq C\frac{|Q_{r_m,k_m}|r_m^{s-\varepsilon}}{\theta(r_m)r_m^{\alpha-\varepsilon}}\\
		&\leq C\frac{|Q_{r_m,k_m}|}{\theta(r_m)},
	\end{align*}
	which also converges to $0$ as $m\to\infty.$ 
	Hence Corollary \ref{cor:advancedBoundaryReg} gives that 
	$$\left[ v_m\right]_{C_p^{\alpha+\varepsilon/2}(Q_1)}\leq C,$$
	independently of $m$. Note that boundedness of $||v_m||_{L^\infty(Q_2)}$ follows from $v_m(0,0)=0$ and the uniform control on the seminorm $C_p^{\alpha}(Q_2)$, see \eqref{eq:growthControlOfBlowUpSequence}. Moreover the growth control \eqref{eq:growthControlOfBlowUpSequence} and the newly obtained estimate together with Arzela-Ascoli theorem give that up to passing to a subsequence $v_m$ converges locally uniformly in $\R^{n+1}$ to some function $v$, and the convergence is $C^{\alpha}_p$ in $Q_1$. Choosing $h\in\R^n$ with $h_n\geq 0$ and $\tau\in\R$, and denoting $v_h(x,t) = v(x+h,t+\tau)$ we conclude from \cite[Proposition 3.1]{RV18}, that
	$$\left\lbrace\begin{array}{rcll}
		(\partial_t + L )(v_h - v) &=&0&\text{ in }\{x_n>0\}\\
		v&=&0&\text{ in }\{x_n\leq 0\}\\
		\left[ v\right]_{C^{\alpha}_p(Q_R)}&\leq & CR^{2s-\varepsilon}&\text{ for all }R>1.
	\end{array}\right. $$
	Hence the Liouville theorem (Proposition \ref{prop:Liouville}) yields that $v(x,t) = (x_n)_+^s(q+Q\cdot x)$, for some $Q\in\textbf{P}.$ 
	But passing the quantities in \eqref{eq:contradictionProperties} to the limit, we get that $q=0$ and $Q = 0$, which contradicts $\left[ v\right]_{C^{\alpha}(Q_1)}\geq \frac{1}{2}.$
\end{proof}

The Liouville type result used above reads as follows.

\begin{proposition}\label{prop:Liouville}
	Let $s\in(0,1)$, $\alpha\in(0,s),$ and $\beta\in(0,2s)$. Assume that $w$ satisfies
	$$\left\lbrace\begin{array}{rcll}
		(\partial_t+ L )(w(\cdot+h,\cdot+\tau)-w) &=&0&\text{in }\{x_n>0\}\times(-\infty,0)\\
		w&=&0&\text{in }\{x_n\leq0\}\times(-\infty,0),
	\end{array}\right.$$
	where $L$ is an operator of the form \eqref{eq:operatorForm}, $h\in\R^n$ with $h_n\geq0$ and $\tau<0$. Assume that $w$ satisfies the growth condition
	$$\left[ w\right]_{C^\alpha_p(Q_R\cap \{t<0\})}\leq R^{\beta},$$
	for $R>1$. Then
	$$w(x,t) = (x_n)_+^s(p\cdot x + q),$$
	for some $p\in\R^n$ and $q\in\R$. 
\end{proposition}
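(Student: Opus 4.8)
The plan is to reduce to the classical half-space Liouville theorem \cite[Theorem 4.11]{FR17} by successively eliminating the time variable and the tangential variables, the passage between scales being handled by compactness arguments in the spirit of the contradiction proofs above. Since $w(0,t)=0$ for $t<0$, the growth hypothesis first yields $\|w\|_{L^\infty(Q_R\cap\{t<0\})}\le CR^{\alpha+\beta}$ for $R>1$, with $\alpha+\beta<3s$. The increments $v_{h,\tau}:=w(\cdot+h,\cdot+\tau)-w$ solve the homogeneous translation-invariant equation in $\{x_n>0\}\times(-\infty,0)$ for all $h_n\ge0$ and $\tau<0$, hence are $C^\infty$ there with interior bounds controlled by $\|v_{h,\tau}\|_{L^\infty(Q_R\cap\{t<0\})}\le C(|h|^\alpha+|\tau|^{\alpha/2s})R^\beta$. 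Extracting convergent rescalings of $v_{te_j,0}$ and $v_{0,\tau}$ (as in Lemma \ref{lem:fakeBoundaryReg} and Proposition \ref{prop:expansion2functions3s}), one deduces that $w$ is $C^\infty$ in $\{x_n>0\}\times(-\infty,0)$, that $\partial_t w$ and $\partial_{x_j}w$ ($1\le j\le n-1$) again solve the half-space problem with zero data on $\{x_n\le0\}$, and that $(\partial_t+L)w$ equals a constant $c_0$.

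Since $\alpha+\beta-2s<s$, the function $\partial_t w$ grows strictly more slowly than $(x_n)_+^s$, so \cite[Theorem 4.11]{FR17} forces $\partial_t w\equiv0$: thus $w=w(x)$ is stationary and $Lw=c_0$ in $\{x_n>0\}$. Next, for $j\le n-1$ the function $\partial_{x_j}w$ is $L$-harmonic in $\{x_n>0\}$, vanishes on $\{x_n\le0\}$, and grows at most like $R^{\alpha+\beta-1}<R^{3s-1}<R^{1+s}$ (using $s<1$); being below the second critical exponent, \cite[Theorem 4.11]{FR17} gives $\partial_{x_j}w=c_j(x_n)_+^s$ for some $c_j\in\R$. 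Here one uses that $(x_n)_+^s$ is $L$-harmonic for every $L$ as in \eqref{eq:operatorForm}, since $L(x_n)_+^s$ only involves the one-dimensional marginal kernel $\zeta\mapsto\int_{\R^{n-1}}K(z',\zeta)\,dz'$, which is even and homogeneous of degree $-1-2s$, hence a multiple of $|\zeta|^{-1-2s}$. Integrating in $x'$, $w(x)=\big(\sum_{j<n}c_jx_j\big)(x_n)_+^s+g(x_n)$ with $g$ vanishing on $(-\infty,0]$.

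It remains to identify $g$. Plugging into $Lw=c_0$ and using that each $L[x_j(x_n)_+^s]$ is a function of $x_n$ alone, the equation reduces to a one-dimensional problem $L_1 g=c_0-c_*x_n^{1-s}$, where $L_1$ has symmetric kernel $c|\zeta|^{-1-2s}$ and $c_*$ is a constant; the one-dimensional half-space classification — together with the facts that the only homogeneous solutions of growth $<2s$ vanishing on $(-\infty,0]$ are multiples of $(x_n)_+^s$ and $(x_n)_+^{1+s}$, and that no admissible solution exists for a nonzero constant or $x_n^{1-s}$ right-hand side — forces $c_0=c_*=0$ and $g(x_n)=q(x_n)_+^s+p_n(x_n)_+^{1+s}$. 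Setting $p=(c_1,\dots,c_{n-1},p_n)$ gives $w(x,t)=(x_n)_+^s(p\cdot x+q)$.

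The main obstacle is the step that produces the equation: since $\alpha+\beta$ may exceed $2s$, neither $Lw$ nor the distributional meaning of $(\partial_t+L)w=c_0$ is available a priori, so one cannot differentiate the equation directly. Everything must be routed through the increments $v_{h,\tau}$ (which do satisfy a genuine equation) and through compactness to pass from the increments to the derivatives of $w$ and to transfer the growth — and this is precisely where the hypothesis that the equation holds for \emph{all} admissible $h,\tau$ is used. A secondary subtlety, accommodated by performing the reduction in the order above, is that for a general kernel $K$ — only even and homogeneous, not necessarily symmetric in each coordinate — the functions $x_j(x_n)_+^s$ need not be $L$-harmonic; one therefore first reduces each $\partial_{x_j}w$ to the genuinely $L$-harmonic function $(x_n)_+^s$ by differentiating, and only afterwards reassembles $w$.
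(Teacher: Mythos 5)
Your proof takes a genuinely different and more self-contained route than the paper's. The paper applies \cite[Theorem 4.11]{FR17} to the increments with $h_n=0$, obtains $v=K_{h,\tau}(x_n)_+^s$, and then refers the entire remaining (normal-direction) reduction to \cite[Theorem~2.1]{RV18}. You instead aim to reduce to derivatives, showing $\partial_t w\equiv 0$ and $\partial_{x_j}w=c_j(x_n)_+^s$ for $j<n$, and then to pin down the $x_n$-dependence via the equation $Lw=c_0$ and a one-dimensional classification. Your observation that the marginal kernel $\zeta\mapsto\int_{\mathbb{R}^{n-1}}K(z',\zeta)\,dz'$ is automatically a multiple of $|\zeta|^{-1-2s}$, so that $(x_n)_+^s$ is always $L$-harmonic while $x_j(x_n)_+^s$ ($j\ne n$) may fail to be, is correct and is a good reason to work with derivatives rather than with $x_j(x_n)_+^s$ directly. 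A small wrinkle is that the hypothesis only controls $[w]_{C^{\alpha/2s}_t}$ with $\alpha<2s$, which does not a priori yield $\partial_t w$; the cleaner route (used by the paper and equivalent in content) is to identify $K_{h,\tau}$ as linear in $(h,\tau)$ and never differentiate $w$ itself.

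The real gap is in the last step. As you point out yourself, $Lw$ does not converge pointwise (indeed it cannot, since $w$ grows like $R^{1+s}>R^{2s}$ for $s<1$), so the equation $Lw=c_0$ has no meaning, and the promised rerouting "through the increments and compactness" is not actually carried out. The natural increment to substitute is $v_h=w(\cdot+he_n)-w$ with $h>0$, which does satisfy $(\partial_t+L)v_h=0$ in $\{x_n>0\}$; but $v_h$ does not vanish on $\{-h<x_n\le 0\}$, so \cite[Theorem 4.11]{FR17} does not apply to it and some additional device is required. This is exactly the content that the paper delegates to \cite[Theorem~2.1]{RV18}, and your proposal does not supply a replacement for it. In addition, the one-dimensional assertion that "no admissible solution exists for a nonzero constant or $x_n^{1-s}$ right-hand side" is stated without proof and is genuinely delicate: for a constant right-hand side, the obvious candidate $(x_n)_+^{2s}$ has to be excluded, and for $s<1$ this requires showing that the (renormalized) $(-\Delta)^s(x_n)_+^{2s}$ is logarithmic rather than constant. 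As written, the argument correctly reaches the form $w=(a\cdot x')(x_n)_+^s+g(x_n)$ but leaves the determination of $g$ — the heart of the normal-direction step — unjustified.
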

\begin{proof}
	Let first $h_n=0$. Denote $v = w(\cdot+h,\cdot+\tau)-w.$ We have
	$$\left\lbrace\begin{array}{rcll}
	(\partial_t+ L )v &=&0&\text{in }\{x_n>0\}\times(-\infty,0)\\
	v&=&0&\text{in }\{x_n\leq0\}\times(-\infty,0),
	\end{array}\right.$$ 
	and $||v||_{L^\infty(Q_R\cap \{t<0\})}\leq CR^\beta.$ Applying \cite[Theorem 4.11]{FR17}, we conclude that $v(x)=K(x_n)_+^s$ for some constant $K$. The rest follows in the same way as in \cite[Theorem~2.1]{RV18}.
\end{proof}

We are now well equipped to prove $C^{2s-\varepsilon}$ regularity of the quotient two solutions. 

\begin{corollary}\label{cor:expansion2functions3s}
	Let $s\in(\frac{1}{2},1)$. Let $\Omega\subset \R^{n+1}$ be $C^{\beta}_p$ in $Q_1$, for some $\beta>1+s$.
	Let $\varepsilon>0$. Let $L$ be an operator of the form \eqref{eq:operatorForm}, with kernel $K\in C^{2\beta+1}(\S^{n-1})$. Assume $u_i\in C_p^{\gamma}(Q_1)$, $i\in\{1,2\}$, $\gamma>0$, solve
	$$\left\lbrace\begin{array}{rcll}
	(\partial_t + L)u_i &=& f_i& \text{in }\Omega\cap Q_1\\
	u_i&=&0&\text{in }\Omega^c\cap Q_1,
	\end{array}\right. $$
	with $\left[ f_i\right]_{C^{\alpha}_p(\Omega\cap Q_1)}\leq 1$, $||u_i||_{L^\infty(B_R\times(-1,1))}\leq R^{2s+\alpha-\varepsilon}$, $\left[ u_1\right]_{C^{\frac{\alpha}{2s}}_t(B_R\times(-1,1))}\leq R^{2s-\varepsilon}$ and $\left[ u_2\right]_{C^{\frac{\alpha}{2s}}_t(B_R\times(-1,1))}\leq R^{3s-1-\alpha}$, for some $\alpha\in(1-s,s-\varepsilon ] $.
	Assume also that $u_2\geq c_0 d^s$, for some $c_0>0$.
	
	Then 
	$$ \left|\left| \frac{u_1}{u_2} \right|\right|_{C^{s+\alpha-\varepsilon}_p(\Omega\cap Q_{1/2})}  \leq C.$$
	The constant $C>0$ depends only on $n,s,\alpha,\varepsilon,c_0,G_0,\|K\|_{C^{2\beta+1}(\S^{n-1})}$ and ellipticity constants.
\end{corollary}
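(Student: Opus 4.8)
The plan is to reduce the quotient estimate to the interior‑plus‑boundary expansion machinery that has already been set up for a single solution in Corollary \ref{cor:expansionWithD} and for two solutions in Proposition \ref{prop:expansion2functions3s}. First I would fix a regularized distance $d$ and diffeomorphism $\psi$ associated to $\Omega$ as in Definition \ref{def:generalisedDistance}; since $\beta>1+s$ and $K\in C^{2\beta+1}(\S^{n-1})$, Proposition \ref{prop:Lds} and Lemma \ref{lem:generalisedDistanceExsistence} guarantee that $(\partial_t+L)d^s$ is controlled in $C^\alpha_p$ by $d^{s-1}$ on the scales we need. By Corollary \ref{cor:CsRegularity} (and the interior estimates of Lemma \ref{lem:interiorRegularityWithGrowth}) both $u_1,u_2$ are $C^s_p$ up to the boundary, and since $u_2\ge c_0 d^s$ the quotient $u_1/u_2$ is already $C^{s-\varepsilon}_p(\overline\Omega\cap Q_{1/2})$ by Corollary \ref{cor:basicQuotientEstimate}; the whole point is to upgrade this to $C^{s+\alpha-\varepsilon}_p$.

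The key step is to combine the pointwise expansion from Proposition \ref{prop:expansion2functions3s} with interior estimates via a dyadic rescaling argument. Proposition \ref{prop:expansion2functions3s} gives, for each boundary point $(z,\tau)\in\partial\Omega\cap Q_{1/2}$, a constant $q_{(z,\tau)}$ and a degree‑one polynomial $Q_{(z,\tau)}$ vanishing at $(z,\tau)$ such that
$$\left[ u_1 - q_{(z,\tau)} u_2 - Q_{(z,\tau)}  d^s \right]_{C^{\alpha}_p(Q_r(z,\tau))}\leq Cr^{2s-\varepsilon},$$
for all $r\in(0,1/2)$; integrating the seminorm down from scale $1/2$ this yields the $L^\infty$ decay $\|u_1-q_{(z,\tau)}u_2-Q_{(z,\tau)}d^s\|_{L^\infty(Q_r(z,\tau))}\le Cr^{2s+\alpha-\varepsilon}$ after adjusting $\varepsilon$. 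Then, exactly as in the moreover part of Corollary \ref{cor:expansionWithD} and in the proof of Corollary \ref{cor:1-epsRegOfQuotient}, I would take an interior point $(x_0,t_0)$ with $d_x(x_0)=|x_0-z|=c_\Omega r$, rescale the difference $u_1-q_{(z,\tau)}u_2-Q_{(z,\tau)}d^s$ to unit scale on $Q_r(x_0,t_0)$, apply Lemma \ref{lem:interiorRegularityWithGrowth} using the $C^\alpha_p$ control of the right‑hand side $f_1-q_{(z,\tau)}f_2-Q_{(z,\tau)}(\partial_t+L)d^s$ (bounded using $[f_i]_{C^\alpha_p}\le1$, the uniform bound on $q_{(z,\tau)},Q_{(z,\tau)}$ from \cite[Lemmas A.10, A.11]{AR20}, and Proposition \ref{prop:Lds}), and conclude
$$\left[ u_1 - q_{(z,\tau)}u_2 - Q_{(z,\tau)}d^s \right]_{C^{\alpha+2s}_p(Q_r(x_0,t_0))}\leq Cr^{s-\alpha-\varepsilon}.$$

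With this interior expansion of order $\alpha+2s$ in hand, the final move is to divide by $u_2$. I would invoke Lemma \ref{lem:divisionLemma}: on each ball $Q_r(x_0,t_0)$ the function $u_2$ is comparable to $r^s$, is $C^{\alpha+2s}_p$ with the interior bound from Lemma \ref{lem:interiorRegularityWithGrowth}, and $u_1-q_{(z,\tau)}u_2-Q_{(z,\tau)}d^s$ plus the already‑controlled $Q_{(z,\tau)}d^s$ term has the stated decay, so the quotient $\frac{u_1}{u_2}-q_{(z,\tau)}-\frac{Q_{(z,\tau)}d^s}{u_2}$ is $C^{\alpha+s-\varepsilon}_p$ on $Q_r(x_0,t_0)$ with constant independent of $r$ and $(x_0,t_0)$; note $\frac{d^s}{u_2}$ is itself $C^{\alpha+s-\varepsilon}_p$ by the single‑function version (Corollary \ref{cor:expansionWithD} applied to $u_2$ together with Lemma \ref{lem:divisionLemma}), and $Q_{(z,\tau)}$ is a polynomial, so the product is fine. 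A covering/patching argument in the spirit of \cite[Lemma B.2]{K21b} then glues these interior seminorm bounds, together with the boundary $C^{s-\varepsilon}_p$ bound, into the global estimate $\|u_1/u_2\|_{C^{s+\alpha-\varepsilon}_p(\overline\Omega\cap Q_{1/2})}\le C$.

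The main obstacle I expect is bookkeeping the $\varepsilon$'s and the growth exponents so that every application of Lemma \ref{lem:interiorRegularityWithGrowth} and Proposition \ref{prop:expansion2functions3s} is legitimate — in particular verifying that $\alpha+2s$ does not exceed the regularity that the interior estimates can deliver given only $C^{\alpha/2s}_t$ global time regularity of $u_i$ (this is exactly the restriction flagged in Remark \ref{rem:1}), and checking the compatibility condition $\alpha\in(1-s,s-\varepsilon]$ under which the degree‑one polynomial blow‑up limit in Proposition \ref{prop:expansion2functions3s} is forced to be $q(x_n)_+^s$. The analytic content — the Liouville theorem, the expansions, the division lemma — is already available; the work is assembling it without losing a power.
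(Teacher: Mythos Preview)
Your proposal is correct and follows the same architecture as the paper's proof: invoke Proposition~\ref{prop:expansion2functions3s} for the $C^{\alpha}_p$ expansion at boundary points, upgrade it to a $C^{\alpha+2s}_p$ interior estimate via Lemma~\ref{lem:interiorRegularityWithGrowth} (using Proposition~\ref{prop:Lds} and the growth hypotheses to control the right-hand side), then divide and patch with \cite[Lemma~B.2]{K21b}.

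The only tactical difference is in how you handle the term $Q_{(z,\tau)}d^s$ at the division step. You propose to divide directly and treat $Q_{(z,\tau)}\,d^s/u_2$ as a product of a degree-one polynomial with the (already controlled) quotient $d^s/u_2$. The paper instead uses the moreover part of Corollary~\ref{cor:expansionWithD} on $u_2$ to write $d^s\approx q_{2,(z,t_0)}^{-1}u_2$ with a $C^{\alpha+2s}_p$ error of the correct size, thereby replacing $q_{(z,t_0)}u_2+Q_{(z,t_0)}d^s$ by a single degree-one polynomial $\tilde Q_{(z,t_0)}$ times $u_2$; only then is Lemma~\ref{lem:divisionLemma} applied. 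Both routes rest on the same ingredient (the expansion of $u_2$ against $d^s$), and both work; the paper's version is slightly cleaner because it avoids a product-rule computation at order $\alpha+s-\varepsilon>1$ and feeds Lemma~\ref{lem:divisionLemma} exactly the form it expects, namely $f-\tilde Q g$. Your interior bound $Cr^{s-\alpha-\varepsilon}$ should in fact come out as $C$ (independent of $r$) once the bookkeeping is done; this is harmless since $s-\alpha-\varepsilon\ge0$ under the hypothesis $\alpha\le s-\varepsilon$, but it is worth tightening.
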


\begin{proof}
	Let $(x_0,t_0)\in \Omega\cap Q_{1/2}$ be such that $d_x(x_0,t_0)\leq C_0 r_0$ and that $Q_{2r_0}(x_0,t_0)\subset \Omega.$ Let $(z,t_0)\in\partial\Omega$ be the closest point to $(x_0,t_0)$ in the boundary at time $t_0$. We want to show that  $ \left[ \frac{u_1}{u_2} \right]_{C^{\alpha+s}_p(Q_{r_0}(x_0,t_0)}  \leq C,$ with $C$ not depending on $x_0,t_0,r_0$.
	From Proposition \ref{prop:expansion2functions3s} we get that for  $(z,t_0)$ we have an expansion of the form
	$$\left[ u_1 - q_{(z,t_0)} u_2 - Q_{(z,t_0)}  d^s \right]_{C^{\alpha}_p(Q_r(z,t_0))}\leq Cr^{2s-\varepsilon}.$$ 
	In combination with $( u_1 - q_{(z,t_0)} u_2 - Q_{(z,t_0)}  d^s)(z,t_0)=0$, we conclude 
	$$||u_1 - q_{(z,t_0)} u_2 - Q_{(z,t_0)}  d^s||_{L^\infty(Q_r(z,t_0))}\leq C r^{\alpha+2s-\varepsilon}.$$
	Moreover since by assumption on the growth of $|| u_i|| _{L^\infty(B_R\times(-1,1))}$ and  $\left[ u_i\right] _{C^\frac{\alpha}{2s}_t(B_R\times(-1,1))}$ we have 
	$$\left[ u_1 - q_{(z,t_0)} u_2 - Q_{(z,t_0)}  d^s \right]_{C^{\frac{\alpha}{2s}}_t(Q_r(z,t_0))}\leq Cr^{2s-\varepsilon},$$
	and 
	$$||u_1 - q_{(z,t_0)} u_2 - Q_{(z,t_0)}  d^s||_{L^\infty(Q_r(z,t_0))}\leq C r^{\alpha+2s-\varepsilon}$$
	for all $r>0$.
	
	We now apply Lemma \ref{lem:interiorRegularityWithGrowth} on the function $v_{r_0}(x,t) = (u_1 - q_{(z,t_0)} u_2 - Q_{(z,t_0)}  d^s)(x_0+2r_0x,t_0+(2r_0)^{2s}t),$ to deduce that 
	\begin{align*}
		\left[ u_1 - q_z u_2 - Q_z  \right.&\left.d^s \right]_{C^{\alpha+2s-\varepsilon}_p(Q_{r_0}(x_0,t_0))}\leq Cr_0^\varepsilon \left[ u_1 - q_z u_2 - Q_z d^s \right]_{C^{\alpha+2s}_p(Q_{r_0}(x_0,t_0))}\\
		& \quad\quad\quad\quad\quad\quad\quad\quad\quad\leq Cr_0^{-\alpha-2s+\varepsilon}\left( r_0^{\alpha+2s}\left[ f_1 + q_{(z,t_0)}f_2\right]_{C^{\alpha}_p(Q_{2r_0}(x_0,t_0))} \right.\\
		&\left.+ r_0^{\alpha+2s}\left[(\partial_t + L)(Q_{(z,t_0)}d^s)\right]_{C^{\alpha}_p(Q_{2r_0}(x_0,t_0))} \right.\\
		&\left.+ \sup_{R>1} R^{-2s-\alpha+\varepsilon}||u_1 - q_{(z,t_0)} u_2 - Q_{(z,t_0)}  d^s||_{L^\infty(Q_{2(R+2)r_0}(x_0,t_0))}  \right.\\
		&\left. +\sup_{R>1} R^{-2s+\varepsilon} r_0^{\alpha}\left[ u_1 - q_{(z,t_0)} u_2 - Q_{(z,t_0)}  d^s \right]_{C^{\frac{\alpha}{2s}}_t(Q_{2(R+2)r_0}(z,t_0))}  
		\right),
	\end{align*}
	where we denoted $q_z,Q_z = q_{(z,t_0)},Q_{(z,t_0)}$ in the first line for transparency.
	The seminorms of $f_i$ are bounded by assumption, $q_{(z,t_0)}$ and $Q_{(z,t_0)}$ are bounded  thanks to \cite[Lemma B.5]{K21a}. The term with $\partial_t(Q_{(z,t_0)}d^s)$ we treat in the following way
	\begin{align*}
		\left[ Q_{(z,t_0)}\partial_t d d^{s-1} \right]_{C^{s-\varepsilon}_p(Q_{2r_0}(z,t_0))}&\leq \left[ Q_{(z,t_0)}\right] ||\partial_t d d^{s-1}|| + \left[ \partial_t d\right]  ||Q_{(z,t_0)} d^{s-1}||+ \left[ d^{s-1}\right] ||Q\partial_t d||\\
		&\leq C(r_0^{1-s+\varepsilon} r_0^{s-1} + r_0^{(\beta-2s+\varepsilon) - s}r_0^{s} + r_0^{-1+\varepsilon}r_0    ) \\
		&\leq C,
	\end{align*}
	thanks to Lemma \ref{lem:generalisedDistanceExsistence}. The estimate  
	$$\left[L(Q_{(z,t_0)}d^s)\right]_{C^{s-\varepsilon}_p(Q_{2r_0}(x_0,t_0))}\leq C$$
	is provided by Proposition \ref{prop:Lds}, while 
	$$||u_1 - q_{(z,t_0)} u_2 - Q_{(z,t_0)}  d^s||_{L^\infty(Q_{2Rr_0}(x_0,t_0))} \leq C ((R+2)r_0)^{\alpha+2s-\varepsilon}$$
	and 
	$$\sup_{R>1} R^{-2s+\varepsilon} r_0^{\alpha}\left[ u_1 - q_{(z,t_0)} u_2 - Q_{(z,t_0)}  d^s \right]_{C^{\frac{\alpha}{2s}}_t(Q_{2(R+2)r_0}(z,t_0))}\leq Cr_0^{\alpha+2s-\varepsilon} $$
	are provided above. 
	Hence we have
	\begin{equation}\label{eq:interiorWithDs}
		\left[ u_1 - q_{(z,t_0)} u_2 - Q_{(z,t_0)}  d^s \right]_{C^{\alpha+2s-\varepsilon}_p(Q_{r_0}(x_0,t_0))}\leq C.
	\end{equation}
	Moreover by Corollary \ref{cor:expansionWithD} we have 
	$$\left[ u_2 - q_{2,(z,t_0)}  d^s \right]_{C^{\alpha+2s}_p(Q_{r_0}(x_0,t_0))}\leq Cr^{-1+s-\alpha},\quad \quad \left[ u_2 - q_{2,(z,t_0)}  d^s \right]_{C^{3s-1}_p(Q_{r_0}(x_0,t_0))}\leq C.$$
	Note that the first inequality also implies that 
	$$||\partial_t(u_2 - q_{2,(z,t_0)}  d^s)||_{L^\infty(Q_{r_0}(x_0,t_0))}\leq Cr^{s-1},$$
	see \cite[Lemma A.5]{K21a}. Hence for any polynomial $Q$ of degree $1$ with $Q(z)=0$ we have
	\begin{align*}
		\left[ Q(u_2-d^s) \right]_{C^{\alpha+2s}_p(Q_{r_0}(x_0,t_0))}\leq  & ||Q||_{L^\infty} \left[ u_2 - q_{2,(z,t_0)}  d^s \right]_{C^{\alpha+2s}_p(Q_{r_0}(x_0,t_0))}\\
		&+|\nabla Q|  \left[ u_2 - q_{2,(z,t_0)}  d^s \right]_{C^{\alpha+2s-1}_p(Q_{r_0}(x_0,t_0))}\\
		& + \left[ Q\right]_{C^{\alpha}_p(Q_{r_0}(x_0,t_0))}||\partial_t(u_2 - q_{2,(z,t_0)}  d^s)||_{L^\infty(Q_{r_0}(x_0,t_0))}\\
		\leq & Cr^{1}r^{-1+s-\alpha} + C r^{s-\alpha} + Cr^{1-\alpha} r^{s-1}\\
		\leq & C.
	\end{align*}
	Combining it with \eqref{eq:interiorWithDs}, we get
	$$\left[ u_1 - \tilde{Q}_{(z,t_0)}  u_2 \right]_{C^{\alpha+2s-\varepsilon}_p(Q_{r_0}(x_0,t_0))}\leq C,$$
	for $\tilde{Q}_{(z,t_0)} (x) = Q_{(z,t_0)}(0) + q_{2,(z,t_0)}^{-1} \nabla Q_{(z,t_0)} \cdot(x-z) $. Using Lemma \ref{lem:divisionLemma} we conclude
	$$\left[ \frac{u_1}{u_2} - \tilde{Q}_{(z,t_0)}  \right]_{C^{\alpha+s-\varepsilon}_p(Q_{r_0}(x_0,t_0))}\leq C,$$
	which proves the claim in view of \cite[Lemma B.2]{K21b}.
\end{proof}

To conclude this section we prove Theorem \ref{thm:bdryHarnack}.

\begin{proof}[Proof of Theorem \ref{thm:bdryHarnack}]
	It is a direct consequence of Corollary \ref{cor:1-epsRegOfQuotient} and  Corollary \ref{cor:expansion2functions3s}.
\end{proof}

\subsection{Optimal H\"older estimates}

Using the expansion result (Proposition~\ref{boundaryHarnackExpansion}) on a solution $u$ and  $d^s$, yields that solutions grow at most as $d^s$ near the boundary. In combination with interior regularity results, we can prove that the solutions must be $C^s$ up to the boundary (in space and time, which is better than $C^s_p$).

\begin{corollary}\label{cor:CsRegularity}
	Let $s\in(\frac{1}{2},1)$. Let $\Omega\subset \R^{n+1}$ be $C^{\beta}_p$ in $Q_1$, for some $\beta > 2s$. Let $\varepsilon>0$. Let $L$ be an operator of the form \eqref{eq:operatorForm}. Assume that $u\in C_p^{\gamma}(\R^n\times (-1,1))$, $\gamma>0$, solves
	$$\left\lbrace\begin{array}{rcll}
	(\partial_t +  L)u &=& f& \text{in }\Omega\cap Q_1\\
	u&=&0&\text{in }\Omega^c\cap Q_1,
	\end{array}\right. $$	
	Then 
	$$\left|\left| u\right|\right|_{C^s(Q_{1/2})}\leq C\left(||fd^{1-s}||_{L^\infty(Q_1\cap\Omega)} + \sup_{R>1} R^{-2s+\varepsilon}||u||_{L^\infty(B_R\times (-1,1))}\right),$$
	and
	$$\left|\left| \frac{u}{d^s}\right|\right|_{C^{2s-1}_p(Q_{1/2}\cap\overline{\Omega})}\leq C\left(||fd^{1-s}||_{L^\infty(Q_1\cap\Omega)} + \sup_{R>1} R^{-2s+\varepsilon}||u||_{L^\infty(B_R\times (-1,1))}\right),$$
	where $C$ depends only on $n,s,G_0$ and ellipticity constants.
\end{corollary}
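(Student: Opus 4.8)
The plan is to compare $u$ directly with the regularized distance $u_2:=d^s$, which is available because $\Omega$ is $C^\beta_p$ with $\beta>2s>1$ (Definition~\ref{def:generalisedDistance}, Lemma~\ref{lem:generalisedDistanceExsistence}). First I would check that $u_2$ is an admissible ``denominator'': trivially $u_2\ge c_0 d^s$; by \cite[Lemma 3.2]{K21a} together with the boundedness of $\nabla d$ and $\partial_t d$ on $Q_{3/4}$ (which holds since $\beta>2s>1$) one has $(\partial_t+L)d^s=f_2$ with $\|d^{1-s}f_2\|_{L^\infty(\Omega\cap Q_1)}\le C$; and $u_2$ grows at most like $R^s\le R^{2s-(1-s)}$ on $B_R\times(-1,1)$. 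So the exponent forced on us is $\rho:=1-s$, which lies in $(0,s)$ precisely because $s\in(1/2,1)$. After normalizing so that $\|fd^{1-s}\|_{L^\infty(Q_1\cap\Omega)}+\sup_{R>1}R^{-2s+\varepsilon}\|u\|_{L^\infty(B_R\times(-1,1))}\le1$, I would run the cutoff from the proof of Proposition~\ref{prop:holderBoundaryEstimate}: replacing $u$ by $\bar u=\eta u$ with $\eta\in C^\infty_c(Q_1)$, $\eta\equiv1$ on $Q_{3/4}$, turns the equation into one with right-hand side $f-L(u(1-\eta))$, and the extra term is pointwise bounded by $C\sup_{R>1}R^{\varepsilon-2s}\|u\|_{L^\infty(B_R\times(-1,1))}\le C$, hence $d^{1-s}$-weighted bounded; moreover $\bar u$ is compactly supported with $\|\bar u\|_{L^\infty}\le\|u\|_{L^\infty(Q_1)}\le1$, so the growth hypotheses (which ask for $R^{3s-1}$ growth) hold trivially. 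By the usual rescaling and covering argument it then suffices to prove both estimates for $\bar u$.

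For the second estimate I would simply apply Corollary~\ref{cor:basicQuotientEstimate} with $u_1=\bar u$, $u_2=d^s$, $\rho=1-s$: all the hypotheses have just been verified, and the conclusion $\|\bar u/d^s\|_{C^{s-\rho}_p(\overline{\Omega}\cap Q_{1/2})}=\|\bar u/d^s\|_{C^{2s-1}_p(\overline{\Omega}\cap Q_{1/2})}\le C$ is exactly the claim on $Q_{1/2}$. For the first estimate I would instead invoke Proposition~\ref{boundaryHarnackExpansion} with the same data and $\rho=1-s$: for every $(z,\tau)\in\partial\Omega\cap Q_{1/2}$ it produces a constant $q_{(z,\tau)}$, uniformly bounded in $(z,\tau)$ by \cite[Lemma B.5]{K21b}, with
$$|u(x,t)-q_{(z,\tau)}d^s(x,t)|\le C\big(|x-z|+|t-\tau|^{\frac{1}{2s}}\big)^{3s-1}$$
near $(z,\tau)$, together with the interior bound $[\,u-q_{(z,\tau)}d^s\,]_{C^{3s-1}_p(Q_r(x_0,t_0))}\le C$ whenever $d_x(x_0)=|x_0-z|=c_\Omega r$. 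Since $d$ is $C^\beta_p$ with $\beta>2s>1$ it is Lipschitz in $x$ and in $t$ on $Q_{3/4}$, so $|d^s(x,t)-d^s(x',t')|\le|d(x,t)-d(x',t')|^s\le C(|x-x'|^s+|t-t'|^s)$, i.e. $d^s\in C^s(Q_{1/2})$ in the Euclidean (non-parabolic) sense, and likewise $|\nabla d^s|,|\partial_t d^s|\lesssim d^{s-1}$. Feeding the pointwise expansion, the uniform bound on $q_{(z,\tau)}$, and the interior estimate for $u-q_{(z,\tau)}d^s$ (which controls its first derivatives by $\lesssim r^{s-1}$ near the boundary) into the gluing argument of \cite[Lemma A.3]{K21a}, exactly as in the proof of Proposition~\ref{boundaryHarnackExpansion}, yields $\|u\|_{C^s(Q_{1/2})}\le C$; the numerical facts $3s-1>s$ and $2s(1-s)\ge1-s$, both equivalent to $s>1/2$, are what make the remainder $u-q_{(z,\tau)}d^s$ and the time-derivative term $\partial_t d^s$ negligible at the $C^s$ scale.

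The main obstacle here is not conceptual but a matter of matching hypotheses: Corollary~\ref{cor:basicQuotientEstimate} and Proposition~\ref{boundaryHarnackExpansion} are stated with growth exponent $2s-\rho=3s-1$, which forces $\rho=1-s$ (only then is $d^{\rho}f$ bounded, via the given $\|d^{1-s}f\|_{L^\infty}$) and hence forces the preliminary cutoff to absorb the a priori merely $R^{2s-\varepsilon}$ growth of $u$ into a bounded right-hand side. The only other point requiring attention is that ``$C^s(Q_{1/2})$'' in the statement means the stronger-than-$C^s_p$ Euclidean Hölder class; controlling time increments of $u$ at order $s$ (rather than at the parabolic order $1/2$) is exactly where the inequality $2s(1-s)\ge1-s$ and the estimate $|\partial_t d^s|\lesssim d^{s-1}$ (which relies on $\beta>2s$) enter. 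Everything else is the same rescaling/covering bookkeeping already used repeatedly in this section.
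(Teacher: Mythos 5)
Your handling of the second estimate is correct and matches the paper: normalize, invoke \cite[Lemma 3.2]{K21a} to see that $(\partial_t+L)d^s$ is bounded by $d^{s-1}$, and apply Corollary~\ref{cor:basicQuotientEstimate} with $\rho=1-s$ to get $\|u/d^s\|_{C^{2s-1}_p}\le C$. (The preliminary cutoff you insert is not needed, since the cited results already allow growth $\|u\|_{L^\infty(B_R\times(-1,1))}\lesssim R^{2s-\rho}$, but it does no harm.)

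The first estimate, however, has a genuine gap. Proposition~\ref{boundaryHarnackExpansion} and the gluing lemma (\cite[Lemma B.2]{K21b}, which is what actually does the gluing in this paper; \cite[Lemma A.3]{K21a} is used in Proposition~\ref{boundaryHarnackExpansion} only to produce the local \emph{parabolic} estimate $\left[u-q_{(z,t_0)}d^s\right]_{C^{3s-1}_p(Q_r(x_0,t_0))}\le C$) give you $\left[u\right]_{C^s_p(\Omega\cap Q_{1/2})}\le C$. In terms of time increments this is only $\left[u\right]_{C^{1/2}_t}\le C$, since the parabolic seminorm of order $s$ controls time differences at exponent $s/(2s)=1/2$, which is strictly weaker than the claimed $C^s_t$ when $s>1/2$. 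The local cube estimate does give $C^s_t$ increments, but only for pairs $(x,t),(x,t')$ lying in a single parabolic cube $Q_r(x_0,t_0)$ with $d(x_0,t_0)\sim r$; whenever $|t-t'|^{1/2s}\gg\max\{d(x,t),d(x,t')\}$ the two points cannot be placed in any such cube, and no general parabolic gluing lemma upgrades $C^{1/2}_t$ to $C^s_t$. Your proposal flags the point (``controlling time increments of $u$ at order $s$\ldots'') but then asserts the conclusion without the mechanism. The paper's proof supplies exactly this missing mechanism: it splits into the two cases $|t-t'|^{1/2s}\lessgtr\frac{1}{2}\max\{d(x,t),d(x,t')\}$, handles the first with the local expansion (where $\tfrac{3s-1}{2s}\ge s$ indeed saves you), and handles the second with a dyadic chain $x_k=(1-2^{-k})x+2^{-k}x_0$ moving away from the boundary, combined with the already-established $C^{2s-1}_p$ regularity of the boundary trace $q_{(z,t)}=\tfrac{u}{d^s}(z,t)$ and a refined growth estimate $\left[u-q_{(z,t_0)}d^s\right]_{C^{2s-1}_p(Q_r(z,t_0))}\le Cr^s$. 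Without this chain argument, or some substitute that treats widely separated times, the first estimate of the corollary is not proved.

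As a small side remark, your assertion that the interior estimate ``controls first derivatives by $\lesssim r^{s-1}$'' is not what $\left[u-qd^s\right]_{C^{3s-1}_p(Q_r)}\le C$ gives: the correct gradient bound near the boundary is $\|\nabla(u-qd^s)\|_{L^\infty(Q_r)}\lesssim r^{3s-2}$, and since $3s-1<2s$ the parabolic seminorm $C^{3s-1}_p$ contains no time-derivative component at all, only the H\"older-in-time seminorm of exponent $\tfrac{3s-1}{2s}$. Neither of these points by itself sinks the argument, but they indicate that the estimates needed for the time part were not actually tracked.
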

\begin{proof}
	Dividing $u$ with $\left(||fd^{1-s}||_{L^\infty(Q_1\cap\Omega)} + \sup_{R>1} R^{-2s+\varepsilon}||u||_{L^\infty(Q_R\times (-1,1))}\right)$, it suffices to prove that $\left[ u\right]_{C^s(Q_{1/2})}\leq C$ and $\left[ \frac{u}{d^s}\right]_{C^{2s-1}_p(Q_{1/2})}\leq 1$, when it holds $||fd^{1-s}||_{L^\infty(Q_1\cap\Omega)} + \sup_{R>1} R^{-2s+\varepsilon}||u||_{L^\infty(Q_R\times (-1,1))}\leq1$.
	
	Note that $(\partial_t+L)d^s$ is bounded by $d^{s-1}$ (see \cite[Lemma 3.2]{K21a}), so from Corollary \ref{cor:basicQuotientEstimate} we get
	$$\left[ \frac{u}{d^s}\right] _{C^{2s-1}_p(\Omega\cap Q_{1/2})}\leq C.$$
	Moreover by Proposition \ref{boundaryHarnackExpansion}
	\begin{equation}\label{eq:oscDecayUDS}
		\left[ u-q_{(z,t_0)}d^s\right]_{C^{3s-1}_p(Q_r(x_0,t_0))}\leq C ,
	\end{equation}
	whenever $Q_{2r}(x_0,t_0)\subset\Omega$ and $d(x_0,t_0) \leq 2r$. 
	From this we deduce that 
	$$\left[ u\right]_{C_p^{s}(Q_r(x_0,t_0))}\leq C\left[ d^s\right]_{C_p^{s}(Q_r(x_0,t_0))}\leq C ,$$
	in view of \cite[Lemma B.5]{K21b}. 
	From \cite[Lemma B.2]{K21b} we get that 
	$$\left[ u\right]_{C_p^{s}(\Omega\cap Q_{1/2})}\leq C,$$
	and hence in particular
	$$\left[ u\right]_{C_x^{s}(\Omega\cap Q_{1/2})}\leq C.$$
	
	To get the time regularity of solutions we need to work a bit more. Choose $(x,t),(x,t')\in\Omega\cap Q_{1/2}$. First if $|t-t'|^{\frac{1}{2s}}\leq \frac{1}{2} \max\{d(x,t),d(x,t')\}$, then we can find $(x_0,t_0)\in\Omega$, so that $(x,t),(x,t')\in Q_{2r}(x_0,t_0)\subset\Omega$ and $d(x_0,t_0) \leq 2r$. Then from \eqref{eq:oscDecayUDS} we deduce that
	$$|u(x,t)-u(x,t')|\leq \left(\left[ u-q_{(z,t_0)}d^s\right]_{C^{s}_t(Q_r(x_0,t_0))}+\left[ d^s\right]_{C^{s}_t(Q_r(x_0,t_0))}\right)|t-t'|^{s}\leq C|t-t'|^s,$$
	since $\frac{3s-1}{2s}>s$ for $s\in(\frac{1}{2},1)$.
	Let now $|t-t'|^{\frac{1}{2s}}>\frac{1}{2} \max\{d(x,t),d(x,t')\}$. First notice that the regularity of the quotient implies also that 
	$$\left[ u-q_{(z,t_0)}d^s\right]_{C^{2s-1}_p(Q_r(z,t_0))}\leq Cr^s,$$
	for all $0<r<\frac{1}{2}$ and $q_{(z,t_0)}$ from Proposition \ref{boundaryHarnackExpansion}. Let $(x,t)$ be the point closer to the boundary than $(x,t')$, $(z,t)$ be its closest boundary point at time $t$, and let $(z',t')$ be the closest to $(x,t')$.  We can estimate
	\begin{align*}
		|u(x,t)-u(x,t')|\leq & |u(x,t)-q_{(z,t)}d^s(x,t) - u(x,t') + q_{(z',t')}d^s(x,t')|\\
		&+|q_{(z,t)}d^s(x,t)-q_{(z',t')}d^s(x,t')|\\
		\leq & \text{I} +\text{II}.
	\end{align*}
	To estimate $ \text{II}$, we observe that $q_{(z,t)} = \frac{u}{d^s}(z,t)$ which is a $C^{2s-1}_p$ function, and hence 
	$|q_{(z,t)} - q_{(z',t')}|\leq C(|z-z'|^{2s-1} + |t-t'|^{\frac{2s-1}{2s}}).$ Moreover by triangle inequality and the assumption that $|t-t'|$ is big, we have that $|z-z'|+|t-t'|^{\frac{1}{2s}}\leq C |t-t'|^{\frac{1}{2s}}.$ Hence
	\begin{align*}
		\text{II}\leq & |q_{(z,t)}-q_{(z',t')}||d^s(x,t)| + |q_{(z',t')}||d^s(x,t)-d^s(x,t')|\\
		\leq &C|t-t'|^{\frac{2s-1}{2s}} |t-t'|^{\frac{1}{2}} + C |t-t'|^s\\
		\leq & C|t-t'|^s.
	\end{align*}
	Let us turn to the term $\text{I}$. We fix $x_0$ on the line going through $z$ and $x$, far enough from the boundary (to be specified later) and define $x_k = (1-2^{-k})x + 2^{-k}x_0$, for $k\in\N.$ We denote $v_z(\cdot) = u(\xi,\tau)- q_{(z,t)}d^s(\cdot)$ and  compute
	\begin{align*}
		\text{II} =& |v_z(x,t)-v_{z'}(x,t')|\\
		\leq & |v_z(x_0,t)-v_{z'}(x_o,t')|+ \sum_{k=1}^\infty |v_z(x_k,t)-v_z(x_{k-1},t)| +\sum_{k=1}^\infty |v_{z'}(x_k,t')-v_{z'}(x_{k-1},t')|  \\
		\leq &|v_z(x_0,t)-v_z(x_0,t')|+|q_{(z,t)}d^s(x_0,t') - q_{(z',t')}d^s(x_0,t')|\\
		&+  \sum_{k=1}^\infty C|x_{0}-z|^{s} |x_k-x_{k-1}|^{2s-1}+\sum_{k=1}^\infty C|x_{0}-z'|^{s} |x_k-x_{k-1}|^{2s-1}\\
		\leq & C|t-t'|^{\frac{3s-1}{2s}} + C|t-t|^{\frac{1}{2}}\sum_{k=1}^\infty 2^{-k(2s-1)}|x-x_0|^{2s-1}\\
		\leq & C|t-t'|^s,
	\end{align*}
	if $x_0$ is chosen so that $|x_0-z|\leq C_0|t-t'|^{\frac{1}{2s}}$ and that $(x_0,t),(x_0,t')\in Q_r(y_0,t_0)\subset Q_{2r}(y_0,t_0)$ for some $(y_0,t_0)\in \Omega.$ Hence also 
	$$\left[ u\right]_{C_t^{s}(\Omega\cap Q_{1/2})}\leq C,$$
	and the claim is proven.
\end{proof}

	\section{Regularity of the free boundary in space and time} \label{sec:obstProb}
	
	In this section we connect the established boundary Harnack results to the regularity of the free boundary in the parabolic nonlocal obstacle problem in the subcritical regime. 
	We say that a function $u$ solves the parabolic obstacle problem for integro-differential operators, if it holds
	\begin{equation}\label{eq:theObstacleProblem}
		\begin{array}{rcll}
		\min\{(\partial_t+L)u,u-\varphi \}&=&0\quad &\text{ in }\R^n\times(-1,1)\\
		u(\cdot,-1)&=&\varphi\quad &\text{ in }\R^n,
		\end{array}
	\end{equation}
	for a given function $\varphi\colon\R^n\to\R$ called the obstacle and some non-local elliptic operator $L$ of the form \eqref{eq:operatorForm}. 
	The $C^{1,\alpha}$ regularity of the free boundary near regular points has been established in \cite{BFR18,FRS22}. Here we prove that the free boundary is $C^{2,\alpha}_p$.

	\begin{proposition}\label{prop:C2alphaRegOfTheFB}
		Let $s\in(\frac{1}{2},1)$ and set  $\alpha=\min\{s,2-2s\}$. Suppose that $u$ solves \eqref{eq:theObstacleProblem} for some operator $L$  of the form \eqref{eq:operatorForm}, with kernel $K\in C^5(\S^{n-1})$. Assume that the obstacle $\varphi\in C^{4}(\R^n).$ Suppose that $0\in\partial\{u>\varphi\}$ is a regular free boundary point.

		Then the normal to the free boundary is  $C^{s+\alpha-\varepsilon}_p$ in $Q_{r_0},$ for some $r_0>0$ and every $\varepsilon>0$.
	\end{proposition}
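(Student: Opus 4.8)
The plan is to reduce the statement to two successive applications of the boundary Harnack inequality of Theorem~\ref{thm:bdryHarnack} to the spatial derivatives of $v:=u-\varphi$, on the domain $\Omega:=\{u>\varphi\}$. Since $\varphi$ is independent of $t$ and $(\partial_t+L)u=0$ in $\Omega$, the function $v\ge 0$ solves $(\partial_t+L)v=-L\varphi$ in $\Omega$ and $v=0$ in $\Omega^c$; with $\varphi\in C^4(\R^n)$ and $K\in C^5(\S^{n-1})$ one has $-L\varphi\in C^{4-2s}_x\subset C^{s}_p$ and hence $-\partial_iL\varphi\in C^{3-2s}_x$, which is more than enough regularity of the right-hand side for what follows. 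Because $0$ is a regular free boundary point, the results of \cite{BFR18,FRS22} give, after a rotation fixing $e_n$ as the inward normal to $\partial\Omega$ at $0$: that $\Omega$ is $C^{1,\alpha_0}_p$ in $Q_{2r_1}$ for some $\alpha_0\in(0,1)$, say $\partial\Omega\cap Q_{2r_1}=\{x_n=g(x',t)\}$; that $\partial_n v\ge c_0 d^s$ in $\Omega\cap Q_{2r_1}$ for some $c_0>0$; and that $v\in C^{1+s}_p$ near $0$, which together with the global time regularity recalled in Remark~\ref{rem:1} (and cutoff truncations) yields the $L^\infty$‑ and $C^{\tfrac{1-s}{2s}}_t$‑ (later $C^{\tfrac{\alpha}{2s}}_t$‑) growth bounds demanded by Theorem~\ref{thm:bdryHarnack}.

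Next I would differentiate. Since $v\in C^{1+s}_p$ and both $v$ and $\nabla v$ vanish on $\Omega^c$ (the zero set of a nonnegative $C^1$ function), for $i=1,\dots,n$ the function $w_i:=\partial_i v$ solves $(\partial_t+L)w_i=-\partial_iL\varphi$ in $\Omega\cap Q_{r_1}$ and $w_i=0$ in $\Omega^c\cap Q_{r_1}$, and $w_n=\partial_n v\ge c_0 d^s$. Applying Corollary~\ref{cor:1-epsRegOfQuotient} (equivalently the first part of Theorem~\ref{thm:bdryHarnack}) to the pair $(w_i,w_n)$ — after rescaling to $Q_1$ and cutting off — gives, for every $\varepsilon>0$,
$$ \Big\|\frac{\partial_i v}{\partial_n v}\Big\|_{C^{1-\varepsilon}_p(\overline{\Omega}\cap Q_{r_2})}\le C,\qquad i=1,\dots,n-1. $$
As in \cite{AR20,DS15,DS16}, the quotient $\partial_i v/\partial_n v$ extends continuously to $\overline{\Omega}$, and the leading $c\,((x-z)\cdot\nu)_+^{1+s}$ profile of $v$ at a free boundary point $(z,\tau)=(z',g(z',\tau),\tau)$ — made rigorous by the first‑order expansion of Proposition~\ref{boundaryHarnackExpansion} — forces $(\partial_i v/\partial_n v)(z,\tau)=\nu_i(z,\tau)/\nu_n(z,\tau)=-\partial_i g(z',\tau)$, where $\nu=(-\nabla' g,1)/\sqrt{1+|\nabla' g|^2}$ is the inward normal. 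Hence $\nabla' g\in C^{1-\varepsilon}_p$, so $g\in C^{2-\varepsilon}_p$ and $\Omega$ is $C^{\beta}_p$ in $Q_{r_2}$ with $\beta:=2-\varepsilon>1+s$ once $\varepsilon<1-s$.

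With the domain now known to be $C^{\beta}_p$ with $\beta>1+s$ and $2\beta+1<5$, the higher‑order estimate is available. I would then apply Corollary~\ref{cor:expansion2functions3s} to $(w_i,w_n)$ with $\alpha:=\min\{s-\varepsilon,2-2s\}$, which lies in $(1-s,s-\varepsilon]$ for $s\in(\tfrac12,1)$: the right‑hand sides $-\partial_iL\varphi\in C^{3-2s}_x\subset C^{\alpha}_p$, the $w_i$ are bounded and (after cutoff) satisfy the required $L^\infty$‑ and $C^{\tfrac{\alpha}{2s}}_t$‑growth bounds, $w_n\ge c_0 d^s$, and $K\in C^5(\S^{n-1})\subset C^{2\beta+1}(\S^{n-1})$. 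This yields, for some $r_0>0$,
$$ \Big\|\frac{\partial_i v}{\partial_n v}\Big\|_{C^{s+\alpha-\varepsilon}_p(\overline{\Omega}\cap Q_{r_0})}\le C,\qquad i=1,\dots,n-1. $$
Combining once more with $(\partial_i v/\partial_n v)|_{\partial\Omega}=-\partial_i g$ and the identity $\nu=(-\nabla' g,1)/\sqrt{1+|\nabla' g|^2}$ — the map $p\mapsto(-p,1)/\sqrt{1+|p|^2}$ being smooth — shows that the normal $\nu$ to $\partial\Omega$ is $C^{s+\alpha-\varepsilon}_p$ in $Q_{r_0}$, which is the claim (recall $\alpha=\min\{s,2-2s\}$ up to renaming $\varepsilon$).

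The hard part will be twofold. First, one must genuinely verify the hypotheses of the two boundary Harnack applications — especially the global‑in‑time regularity and the polynomial‑in‑$R$ growth bounds for the linearized solutions $\partial_i v$; as explained in Remark~\ref{rem:1} this is precisely where the order is capped below $2$ and where the global $C^{\gamma/2s}_t$ regularity of $\partial_t u$ must be exploited. Second, the bootstrap hinges on identifying the boundary traces of the quotients $\partial_i v/\partial_n v$ with the tangential derivatives of the graph function $g$: establishing this rigorously (not merely heuristically through the blow‑up profile) is exactly why the first‑order expansion of Proposition~\ref{boundaryHarnackExpansion}, rather than just the Hölder bound on the quotient, is indispensable, and it is this step that upgrades $\Omega$ from $C^{1}_p$ to $C^{>1+s}_p$ and thereby unlocks the second, higher‑order estimate.
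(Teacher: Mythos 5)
Your proposal follows essentially the same two-pass bootstrap as the paper: differentiate $v=u-\varphi$, apply Corollary~\ref{cor:1-epsRegOfQuotient} to pairs of derivatives of $v$ to upgrade the domain from $C^1_p$ to $C^\beta_p$ with $\beta>1+s$, then apply Corollary~\ref{cor:expansion2functions3s} to reach order $s+\alpha-\varepsilon$. The hypotheses you check (the $C^s_p$ regularity of $\partial_e L\varphi$, the lower bound $\partial_n v\ge c_0 d^s$, the global time-H\"older bounds from Remark~\ref{rem:1}) are the right ones.

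However, there is a genuine gap in the pivot step. You only apply the boundary Harnack to $w_i=\partial_i v$ for spatial indices $i=1,\dots,n$, obtain $\nabla'g\in C^{1-\varepsilon}_p$ from the trace identity $(\partial_i v/\partial_n v)|_{\partial\Omega}=-\partial_i g$, and then assert ``hence $g\in C^{2-\varepsilon}_p$.'' That implication does not hold: in the parabolic H\"older scale of Definition~2.1, $[g]_{C^{2-\varepsilon}_p}$ involves either $[\partial_t g]_{C^{2-\varepsilon-2s}_p}$ (if $2-\varepsilon>2s$) or $[g]_{C^{(2-\varepsilon)/2s}_t}$ (if $2-\varepsilon\le 2s$), and spatial-tangential-derivative control $\nabla'g\in C^{1-\varepsilon}_p$ gives neither (think of $g(x',t)=h(t)$: it has $\nabla'g\equiv 0$ but arbitrary time regularity). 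The same omission recurs in your final step, where you again reduce the regularity of the normal to $\nabla'g$ alone. To close the gap you must also apply the boundary Harnack to $w_t:=\partial_t v$, which solves $(\partial_t+L)w_t=0$ in $\Omega$ (since $\varphi$ is time-independent), with $w_t=0$ in $\Omega^c$, and then use the quotient $\partial_t v/\partial_n v\in C^{1-\varepsilon}_p$ (later $C^{s+\alpha-\varepsilon}_p$) together with $(\partial_t v/\partial_n v)|_{\partial\Omega}=-\partial_t g$ to control the time regularity of $g$. This is exactly what the paper does: it expresses every component of the space-time normal as a smooth function of the quotients $\partial_j v/\partial_n v$, $j=1,\dots,n-1$, \emph{and} $\partial_t v/\partial_n v$, and it is the latter quotient that supplies the time regularity of the free boundary. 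Incidentally, it is also $\partial_t v$ that makes the global $C^{\gamma/2s}_t$ regularity of $\partial_t u$ from Remark~\ref{rem:1} genuinely load-bearing (and not just a checkbox among hypotheses), since that bound is what limits the admissible $\alpha$ in the second application of boundary Harnack and caps the final order strictly below $2$.
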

	\begin{proof}
		In \cite{BFR18} (see also \cite{FRS22}) they prove that there exists $r_0>0$ so that in $Q_{r_0}$ the free boundary is $C^{1,\gamma}$ for some $\gamma>0$ in space-time and that up to a rotation of coordinates $\partial_n u\geq c_1d^s$, for some $c_1>0.$
		We can assume that $r_0=1$, since the problem is scale invariant. Let us denote 
		$$v = u-\varphi.$$
		Since $u$ solves \eqref{eq:theObstacleProblem}, the partial derivatives of $v$ solve 
		$$\left\lbrace \begin{array}{rcll}
		(\partial_t + L) w &=&\partial_e f&\text{in }\Omega\cap Q_1\\
		w&=&0&\text{in }\Omega^c\cap Q_1,
		\end{array}\right.$$
		for $f=-L\varphi$ and $\Omega=\{u>\varphi\} = \{v>0\}.$
		
		Remember that $f$ is independent of time and by assumption on $\varphi$, we have $f\in C^{1+s}(\R^n)$ and hence $\partial_ef\in C^{s}_p(\Omega\cap Q_1)$. From \cite[Corollary 1.6]{FRS22} (see also \cite{CF13} for the case $(-\Delta)^s$ and with a smaller $\alpha$) we deduce, that all the partial derivatives of $u$ are $ C^{\frac{\alpha-\varepsilon/2}{2s}}_t(\R^n\times(-1,1))$, and hence the same is true for partial derivatives of $ v$.
		Therefore we can apply Corollary \ref{cor:1-epsRegOfQuotient}, which gives that all quotients $v_i/v_n$ and $v_t/v_n$ are $C^{1-\varepsilon}_p(\overline{\Omega}\cap Q_{1/2}),$ with bounds on the norms. 
		
		Now notice that every component the normal vector $\nu(x,t)$ to the level set $\{v=\tau\}$, $\tau>0$ can be expressed as
		$$\nu^i(x,t) = \frac{\partial_i v}{|\nabla_{(x,t)} v|}(x,t) = \frac{\partial_i v/\partial_nv}{\left(\sum_{j=1}^{n-1}(\partial_jv/\partial_nv)^2 + 1 + (\partial_tv/\partial_nv)^2\right)^{1/2}}.$$ 
		Letting $\tau\downarrow0$, we get that the normal vector is $C^{1-\varepsilon}_p(\partial \Omega\cap Q_{1/2}).$ Hence the free boundary is $C^{2-\varepsilon}_p$ in $Q_{1/2}$.
		
		Now we can apply Corollary \ref{cor:expansion2functions3s}, which gives that the quotients $v_i/v_n$ and $v_t/v_n$ are $C^{s+\alpha-\varepsilon}_p(\overline{\Omega}\cap Q_{1/2}),$ which furthermore implies that the normal to the free boundary is $C^{s+\alpha-\varepsilon}_p$ in $Q_{1/2}$.
	\end{proof}
		
	Note that the above proof works already if the initial regularity of the free boundary is~$C^1_p$.
	
	Theorem \ref{thm:1.1} follows.

	\begin{proof}[Proof of Theorem \ref{thm:1.1}]
		The claim follows from Proposition \ref{prop:C2alphaRegOfTheFB}, noting that $s+\alpha>1$.
	\end{proof}

\section{Operator evaluation of $d^s$} \label{sec:Lds}

	This last section we devote to the analysis of the evaluation of operators satisfying \eqref{eq:operatorForm} of the distance function. It is put at the end due to its technical and computational nature. The regularity in space has been studied in \cite{AR20,K21b}. We follow their approach, but we focus on the regularity in time. In fact the regularity in time behaves as expected: the regularity in space is proved to be $C^{\beta-1-s}$, if the domain is of class $C^\beta$, while  we establish the $C^\frac{\beta-1-s}{2s}$ regularity in time. Precisely we prove the following.

\begin{proposition}\label{prop:Lds}
	Let $\Omega$ be $C^\beta_p$ in $Q_1$, for some $\beta\in(1+s,2)$. Assume that the operator $L$ satisfies \eqref{eq:operatorForm} with its kernel $K\in C^{2\beta+1}(\S^{n-1}).$ If $Q\colon \R^n \to \R$ is a  polynomial of degree one with gradient bounded by $1$, we have
	\begin{align*}
	\left[ L(Qd^s_+)\right]_{C^{\beta-1-s}_p(Q_r\cap\Omega)} &\leq C(|Q(0)|+r^{s-\varepsilon}), 		
	\end{align*}
	for every $r\leq \frac{1}{2},$ and every $\varepsilon\in(0,s)$.
	The constant $C$ depends only on $n,s,G_0,\|K\|_{C^\alpha{2\beta+1}(\S^{n-1})}$ and ellipticity constants.
	
	Furthermore, when $(x_0,t_0)\in \Omega\cap Q_{1/2}$, with $d(x_0,t_0) \leq  C_0 r$, so that $Q_{2r}(x_0,t_0)\subset \Omega$, we have 
	$$	\left[ L(Qd^s_+)\right]_{C^{\beta-1}_p(Q_r(x_0,t_0))} \leq C(|Q(0)|+|x_0|)r^{-s}.$$
	
	Moreover if $Q(0)=0$, whenever $d_x(x_0) = |x_0| = C_0r,$ with a suitable constant $C_0$ depending only on $n$, it holds
	$$\left[ L(Qd^s_+)\right]_{C^{\beta-1-\varepsilon}_p(Q_r(x_0,0))}\leq C.$$
\end{proposition}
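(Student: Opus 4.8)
The plan is to follow the spatial computations of \cite{AR20,K21b} and complement them with an analysis in the time variable, which is the genuinely new ingredient. I would prove the second (interior) display first, deduce the first display from it by a Campanato-type dyadic covering in the distance variable, and obtain the last display as a one-line consequence of the interior estimate. The first step is, near a boundary point, to flatten $\partial\Omega$ using the diffeomorphism $\psi$ and the regularized distance $d$ of Definition~\ref{def:generalisedDistance} (constructed in Lemma~\ref{lem:generalisedDistanceExsistence}): after this change of variables $d$ agrees with $x_n$ up to a $C^\beta_p$ correction, $Qd^s_+$ becomes a linear combination of the model profiles $(x_n)_+^s$ and $x_j(x_n)_+^s$, with coefficients controlled by $|Q(0)|$ and $|x_0|$, plus a remainder vanishing at the flattened point to order $\beta+s$, and $L$ becomes an operator $\widetilde{L}$ which, frozen at the point, is a translation-invariant operator $L_0$ with homogeneous even kernel as in \eqref{eq:operatorForm}, up to a remainder kernel that is still $C^{\beta-1-s}$ on $\S^{n-1}$; this last fact is exactly where $K\in C^{2\beta+1}(\S^{n-1})$ enters, to absorb the derivatives lost through the push-forward by $\psi$ (recall $|D^k\psi|\le Cd^{\beta-k}$). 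Since $L_0$ annihilates $(x_n)_+^s$ in $\{x_n>0\}$ and maps $x_j(x_n)_+^s$ to an explicit homogeneous profile, $L(Qd^s_+)$ splits into that profile, the $L_0$-action on the distance and flattening remainders, and the $(\widetilde{L}-L_0)$-action on the leading model.

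For the interior estimate one works on $Q_r(x_0,t_0)$ with $Q_{2r}(x_0,t_0)\subset\Omega$ and $d(x_0,t_0)\le C_0r$, so $d\sim r$ there, and splits the singular integral at scale $\sim r$. On the near part one inserts a second order Taylor expansion of $Qd^s_+$ together with the bounds $|D^k(Qd^s_+)|\le C(|Q(0)|+|x_0|)\,d^{s-k}$ on $\{d>0\}$ (combining the elementary estimates for $d^s$ with $|D^kd|\le Cd^{\beta-k}$); integrating the even kernel against $|x-y|^2$ over a ball of radius $\sim r$ then produces the factor $r^{-s}$. On the far part one subtracts the half-space model and bounds the difference by $\|K\|_{C^{2\beta+1}(\S^{n-1})}$, $G_0$ and the $C^\beta_p$-size of the flattening. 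Assembling these gives the pointwise bound and the spatial Hölder estimate; for the full parabolic seminorm one runs the same splitting comparing the integrals at two times $t,t'$, using in the near part that $(Qd^s_+)(\cdot,t)$ and $(Qd^s_+)(\cdot,t')$, together with two space derivatives, differ by $C(|Q(0)|+|x_0|)\,d^{s-2}|t-t'|^{\beta/(2s)}$ since $d$ and $\psi$ are $C^\beta_p$ in time, and in the far part letting the subtracted model itself move with $t$. This yields $\left[L(Qd^s_+)\right]_{C^{\beta-1}_p(Q_r(x_0,t_0))}\le C(|Q(0)|+|x_0|)r^{-s}$.

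The first display now follows by covering $Q_r\cap\Omega$ with parabolic cylinders $Q_\rho(x_\rho,t_\rho)$ with $\rho\sim d(x_\rho,t_\rho)$, applying the interior estimate on each so that $\operatorname{osc}_{Q_\rho}\bigl(L(Qd^s_+)-\ell_\rho\bigr)\le C(|Q(0)|+r)\,\rho^{\beta-1-s}$ for the affine tangent $\ell_\rho$, and summing the dyadic chain as in \cite[Lemma~B.2]{K21b}; the exponent drops from $\beta-1$ to $\beta-1-s$ precisely because of the $\rho^{-s}$ blow-up. For the last display: when $Q(0)=0$ and $d_x(x_0)=|x_0|=C_0r$, the boundary moves by only $\sim r^\beta\ll r$ over $Q_r(x_0,0)$, so $Q_{2r}(x_0,0)\subset\Omega$ once $C_0$ is large enough, the interior estimate gives $\left[L(Qd^s_+)\right]_{C^{\beta-1}_p(Q_r(x_0,0))}\le C|x_0|r^{-s}=Cr^{1-s}$, and interpolating on a cylinder of parabolic radius $r$ gives $\left[L(Qd^s_+)\right]_{C^{\beta-1-\varepsilon}_p(Q_r(x_0,0))}\le(2r)^{\varepsilon}\,Cr^{1-s}\le C$.

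The main obstacle is the control in time in the interior estimate. In the stationary (cylindrical) case one may subtract a single fixed half-space profile, but here the far part of $L$ at time $t$ sees $\Omega$ at all times in $(t-r^{2s},t+r^{2s})$, so one must subtract a $t$-dependent model and bound its time increments using only that $d$ and $\psi$ are $C^\beta_p$ in time; there is room for this because $\beta/(2s)>(\beta-1-s)/(2s)$, but the bookkeeping — keeping the increments of the moving model of strictly lower order while the derivative losses $|D^k\psi|\le Cd^{\beta-k}$ stay compensated by $K\in C^{2\beta+1}(\S^{n-1})$ — is the delicate part. Everything else is a careful but routine adaptation of the computations in \cite{AR20,K21b}.
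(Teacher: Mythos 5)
Your proposal is correct in outline and broadly follows the same strategy as the paper: flatten with $\psi$ (Lemma~\ref{lem:generalisedDistanceExsistence}), isolate a frozen model at the base point and exploit the fact that the half-space operator annihilates $(x_n)_+^s$, control the resulting remainder terms using $|D^k\psi|\le Cd^{\beta-k}$ and $K\in C^{2\beta+1}(\S^{n-1})$, and recover the global $C^{\beta-1-s}_p$ bound from the interior one by a dyadic/Campanato covering (the paper invokes \cite[Lemma~B.2]{K21b} for exactly this). The deduction of the last display from the interior one by interpolation is also what the paper does. There is, however, one methodological difference worth knowing. The paper does not Taylor-expand $Qd^s_+$ directly in the near field; it first applies the integration-by-parts identity \cite[Lemma~2.4]{AR20}, which rewrites $L(Qd^s_+)(x,t)$ as a principal-value integral of the density $\bigl(Qsd_+^{s-1}\nabla d+\nabla Q\,d^s\bigr)\cdot(y-x)K(y-x)$, i.e.\ a $d^{s-1}$-weight against a kernel of order $-n-2s+1$. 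Only then does it change variables $y=\phi_t(\eta)$, which turns the weight into the fixed profile $(\eta_n)_+^{s-1}$ and pushes all $t$-dependence onto the density $\rho$ and the transported kernel, and expands $\phi_t(x)-\phi_t(y)$ and $K(\phi_t(x)-\phi_t(y))$ about the frozen linearization $D\phi_t(x)$, producing four error terms $I_1,\dots,I_4$ (the leading one, $I_{11}$, is reduced to a convergent far-field integral by \cite[Lemma~3.4]{K21a}). This lowers the order of singularity one needs to control in the near field, so that only the $C^{\frac{\beta-1}{2s}}_t$ modulus of $D^2d$ and $D\phi_t$ (with the refined $r^{-1}$ factor from Lemma~\ref{lem:generalisedDistanceExsistence}) is needed rather than three spatial derivatives. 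Your direct Taylor-expansion route should also close, but it forces you to track $|D^3(Qd^s_+)|$ and the time modulus of $D^2(Qd^s_+)$ near the pole simultaneously, which is exactly the bookkeeping you flag as delicate; the integration-by-parts entry point is what keeps that bookkeeping manageable. Finally, the paper does not reprove the spatial part of the seminorm — it cites \cite[Corollary~2.3]{AR20} and works out only the time increments — whereas you plan to redo both; either is fine, but citing the elliptic result saves the duplicated spatial estimate.
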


\begin{proof}
	The regularity of $L(Qd^s)$ in space follows from \cite[Corollary 2.3]{AR20}, since the generalized distance function  satisfies the suitable estimates (see Definition \ref{def:generalisedDistance}). Let us show the regularity in time.
	
	We start with applying \cite[Lemma 2.4]{AR20}, to get
	$$L(Qd_+^s)(x,t) = -\frac{1}{2s}p.v.\int_{\R^n} (Q(y)sd_+^{s-1}(y,t)\nabla d(y,t) + \nabla Q d^s(y,t))\cdot (y-x)K(y-x)dy.$$
	We now introduce the new variable $y = \phi_t(\eta)$, where $\phi_t$ comes as the inverse of the space component of the flattening map $\psi$: $\psi(x,t) = (\psi_t(x),t),$ $\phi_t= \psi_t^{-1}.$ Note that then $d(\phi_t(\eta),t) = \eta_n$ and that $\phi_t\in C^{\beta}_p\cap C^2_x(\{x_n\neq 0\})$ and satisfies 
	$ |D^2\phi_t (x)|\leq C |x_n|^{\beta-2}.$
	We get
	\begin{align*}
	L(Qd_+^s)(x,t) = &-\frac{1}{2s}p.v.\int_{\R^n} (\eta_n)_+^{s-1}\rho(\eta,t) (\phi_t(\eta)-x)K(\phi_t(\eta)-x)d\eta\\
	=&  -\frac{1}{2s}p.v.\int_{B_1} (\eta_n)_+^{s-1}\rho(\eta,t) (\phi_t(\eta)-x)K(\phi_t(\eta)-x)d\eta\\
	&  -\frac{1}{2s}\int_{B_1^c} (\eta_n)_+^{s-1}\rho(\eta,t) (\phi_t(\eta)-x)K(\phi_t(\eta)-x)d\eta
	\end{align*}
	where we denoted $\rho(\eta,t) = (Q(\eta)\nabla d(\phi(\eta,t)) + \nabla Q d(\phi(\eta,t) )|\nabla \phi_t(\eta)|.$ Note that $\phi_t\in C^\beta_p$ and $\rho \in C^{\beta-1}_p$, hence the integral in $B^c_1$ is  $C^{\frac{\beta-1}{2s}}_t$ as well. Therefore we only need to study the regularity of the first integral. We denote
	$$\hat{I}(x,t) = -\frac{1}{2s}p.v.\int_{B_1} (y_n)_+^{s-1}\rho(y,t) (\phi_t(y)-x)K(\phi_t(y)-x)dy,$$
	and
	$$I(x,t) = \hat{I}(\psi^{-1}(x,t)) = -\frac{1}{2s}p.v.\int_{B_1} (y_n)_+^{s-1}\rho(y,t) (\phi_t(y)-\phi_t(x))K(\phi_t(y)-\phi_t(x))dy.$$
	Since $\psi^{-1}$ is $C^{\beta}_p$ (and hence Lipschitz), it is sufficient to show that $I$ satisfies the stated estimate. To do so we expand $\phi_t(x)-\phi_t(y) = D\phi_t(x) (x-y) + S(x,y,t).$  Similarly		 
	\begin{align*}
	K(\phi_t(x)-\phi_t(y)) &= K(D\phi_t(x) (x-y) + S(x,y,t))\\
	& = |x-y|^{-n-2s}K(D\phi_t(x) \langle x-y\rangle + |x-y|^{-1}S(x,y,t))
	\\&= |x-y|^{-n-2s}\left(K(D\phi_t(x) \langle x-y\rangle) + K_1(x,y,t)\right)
	\end{align*}
	Before plugging the expansions in the formula above, we estimate the newly obtained terms. Using the fundamental theorem of calculus, we can write 
	\begin{align*}
	S(x,y,t) &= \int_x^y \left(D\phi_t(\xi) - D\phi_t(x)\right)(y-x)d\xi\\
	&= \int_x^y \int_x^\xi (\xi-x)^T D^2\phi_t(\eta) d\eta (y-x)d\xi.
	\end{align*}
	Since $\phi_t\in C^{\beta}_p$, we can estimate $|S(x,y,t)|\leq C|x-y|^{\beta}$.
	Analogously,
	$$K_1(x,y,t) = \int_0^1 \nabla K \left( D\phi_t(x)\langle x-y\rangle + \xi \frac{S(x,y,t)}{|x-y|} \right)\frac{S(x,y,t)}{|x-y|} d\xi,$$
	and hence
	$|K_1(x,y,t)|\leq C|x-y|^{\beta-1}.$ 
	Moreover, we want to estimate the incremental differences in time of $S$ and $K_1$. Since $D\phi_t$ is $C^{\frac{\beta-1}{2s}}_t$, we get 
	\begin{equation}\label{eq:incrementS}
	|S(x,y,t) - S(x,y,t')|\leq C |x-y||t-t'|^{\frac{\beta-1}{2s}}.
	\end{equation}
	Moreover we can extract more using the fine estimate from Lemma \ref{lem:generalisedDistanceExsistence} and get 
	\begin{equation}\label{eq:incrementsBetter}
	|S(x,y,t) - S(x,y,t')|\leq Cr^{-1} |x-y|^2|t-t'|^{\frac{\beta-1}{2s}},
	\end{equation}
	when $(y,t')\in Q_r(x,t)$, and $r=\frac{x_n}{2}.$.
	Since $\nabla K$ is Lipschitz, increments of $K_1$ inherit the estimates of $S$, and hence
	\begin{equation}\label{eq:incrementK_1}
	|K_1(x,y,t)-K_1(x,y,t')|\leq C |t-t'|^{\frac{\beta-1}{2s}}
	\end{equation}
	and 
	\begin{equation}\label{eq:incrementK1Better}
	|K_1(x,y,t) - K_1(x,y,t')|\leq Cr^{-1} |x-y||t-t'|^{\frac{\beta-1}{2s}},
	\end{equation}
	whenever $(y,t')\in Q_r(x,t)$, and $r=\frac{x_n}{2}.$
	
	We plug the expansions into the expression for $I$ and get\footnote{We omit the principal value symbol in some of the integrals until the end of the proof.}
	\begin{align*}
	\begin{split}
	-2sI(x,t) = & \int_{B_1} (y_n)_+^{s-1}\rho(y,t)D\phi_t(x)(y-x)K(D\phi_t(x)\langle y-x\rangle) |x-y|^{-n-2s}dy\\
	& + \int_{B_1} (y_n)_+^{s-1}\rho(y,t)S(x,y,t)K(D\phi_t(x)\langle y-x\rangle) |x-y|^{-n-2s}dy\\
	& + \int_{B_1} (y_n)_+^{s-1}\rho(y,t)D\phi_t(x)(y-x)K_1(x,y,t) |x-y|^{-n-2s}dy\\
	& + \int_{B_1} (y_n)_+^{s-1}\rho(y,t)S(x,y,t)K_1(x,y,t) |x-y|^{-n-2s}dy\\
	=& I_1+I_2+I_3+I_4.
	\end{split}
	\end{align*}
	We start with analysing $I_1$. We split it furthermore as follows
	\begin{align*}
	I_1 =& \rho(x,t)\int_{B_1} (y_n)_+^{s-1}D\phi_t(x)(y-x)K(D\phi_t(x)\langle y-x\rangle) |x-y|^{-n-2s}dy \\
	&+ \int_{B_1} (y_n)_+^{s-1}(\rho(y,t)-\rho(x,t))D\phi_t(x)(y-x)K(D\phi_t(x)\langle y-x\rangle) |x-y|^{-n-2s}dy\\
	=& I_{11}+ I_{12}.
	\end{align*}
	The integral $I_{11}$ is the term with the lowest order of $|x-y|$, but we exploit the fact that it is almost an evaluation of some homogeneous operator of order $2s$ of the function $(x_n)_+^p$. To do so, we decouple the space variables in the following way
	$$I_{11} = I_{11}(x,x,t)$$
	where
	$$I_{11}(\xi,x,t) = \rho(x,t)\int_{B_1} (y_n)_+^{s-1}D\phi_t(\xi)\langle y-x\rangle K(D\phi_t(\xi)\langle y-x\rangle) |x-y|^{-n-2s+1}dy.$$
	Then by \cite[Lemma 3.4]{K21a} we conclude that
	\begin{align*}
	I_{11}(\xi,x,t) =\rho(x,t)\int_{B_1^c} (y_n)_+^{s-1}D\phi_t(\xi)\langle y-x\rangle K(D\phi_t(\xi)\langle y-x\rangle) |x-y|^{-n-2s+1}dy.
	\end{align*}
	The obtained integral converges and hence $I_{11}$ is $C^{\frac{\beta-1}{2s}}_t$. 
	
	We proceed with analysing $I_{12}$. We want to bound the time incremental difference of $I_{12}$. Since we are getting a lot of terms, we simplify the notation in the following way: we denote $\Delta_t f = f(t)-f(t')$, as well as $\Delta_x f = f(x)-f(y)$, furthermore $\rho = (Q\nabla d + d\nabla Q)J$, where $J$ stands for $|\phi_t(y)|.$ 
	We furthermore split $I_{12}$ 
	$$I_{12} = \int_{B_r(x)} \ldots dy + \int_{B_1\backslash B_r(x)} \ldots dy =B_{12r} + I_{12rc},$$
	for $r=\frac{x_n}{2}.$
	In the region near the pole we need to extract both $|t-t'|^{\frac{\beta-1}{2s}}$, as well as $|x-y|$ from the increment of $\rho$. This is done with adding an subtracting $D\rho(x,t) (y-x)$ from $\rho(y,t)-\rho(x,t)$. We get
	\begin{align*}
	I_{12r} = &\int_{B_{r}(x)} (y_n)_+^{s-1}(\rho(y,t)-\rho(x,t)- D\rho(x,t)(y-x))D\phi_t(x)(y-x)K(D\phi_t(x)( y-x)) dy\\ 
	&+ \int_{B_{r}(x)} (y_n)_+^{s-1} D\rho(x,t)(x-y)D\phi_t(x)(y-x)K(D\phi_t(x)\langle y-x\rangle) |x-y|^{-n-2s}dy.
	\end{align*}
	We can write
	$$ S_1(x,y,t)=\rho(y,t)-\rho(x,t)- D\rho(x,t)(y-x) = \int_x^y(D\rho(\xi,t)-D\rho(x,t))d\xi\cdot (y-x).$$
	Using the simplified notation we compute
	$$D\rho = D( J) (Q\nabla d + d\nabla Q) + J(2\nabla Q \nabla d + QD^2d),$$
	and hence
	\begin{align*}
	\Delta_t D\rho = &  \Delta_t(D( J)) (Q\nabla d + d\nabla Q) + \Delta_tJ(2\nabla Q \nabla d + QD^2d)\\
	&+ D( J) (Q\Delta_t\nabla d + \Delta_td\nabla Q) + J(2\nabla Q \Delta_t \nabla d + Q\Delta_tD^2d).
	\end{align*}
	Lemma \ref{lem:generalisedDistanceExsistence} gives that 
	$$|\Delta_t D(J)|, |\Delta_t D^2d|\leq Cr^{-1}|t-t'|^{\frac{\beta-1}{2s}},\quad\quad |D(J)|,|D^2d|\leq Cr^{\beta-2}, $$
	and so we can bound 
	$$|\Delta_tD\rho(\xi,t)| \leq  Cr^{-1}(Q(0) +|x|+r)|t-t'|^{\frac{\beta-1}{2s}} ,$$
	since $|Q(\xi)|\leq |Q(0)| + C|\xi|\leq C(|Q(0) + |x| + r)$.
	
	We are now ready to estimate the incremental difference of $I_{12r}$. Whenever the difference operator does not apply on terms with $\rho$, we get $|t-t'|^{\frac{\beta-1}{2s}}$ from the other terms, and we are left with 
	$$Cr^{\beta-2}\int_{B_{r}(x)} (y_n)_+^{s-1}| y-x|^{-n-2s+2} dy,$$
	which we can bound with 
	$Cr^{\beta-s-1}$. When the difference operator lands on the term with $\rho,$ we get the following 
	$$Cr^{-1}(|Q(0)|+|x| + r)|t-t'|^{\frac{\beta-1}{2s}}\int_{B_{r}(x)} (y_n)_+^{s-1}| y-x|^{-n-2s+2} dy,$$
	which is bounded by $Cr^{-s}(|Q(0)|+|x| + r) |t-t'|^{\frac{\beta-1}{2s}}$.
	
	We proceed with $I_{12rc}$. We can estimate		
	$$|\Delta_t I_{12rc}|\leq \int_{B_1\backslash B_r(x)} y_{n+}^{s-1}|\Delta_t\Delta_x \rho| |x-y|^{-n-2s+1}dy + C|t-t'|^{\frac{\beta-1}{2s}}\int_{B_1}y_{n+}^{s-1}|x-y|^{-n-2s+\beta}, $$
	where the second term bounds all the other contributions of $\Delta_t$ that do not come from $\Delta_x\rho$. We continue with computing the double incremental difference
	\begin{align*}
	\Delta_t\Delta_x \rho\leq & \Delta_t \left( \Delta_x J (Q\nabla d + d\nabla Q) + J(\Delta_x Q \nabla d + Q\Delta_x\nabla d + \Delta_x d \nabla Q) \right) \\
	\leq & \Delta_t\Delta_x J (Q\nabla d + d\nabla Q) +\Delta_t J(\Delta_x Q \nabla d + Q\Delta_x\nabla d + \Delta_x d \nabla Q)\\
	&+ \Delta_x J (Q \Delta_t\nabla d + \Delta_td\nabla Q) + J(\Delta_x Q \Delta_t\nabla d + Q\Delta_t\Delta_x\nabla d + \Delta_t\Delta_x d \nabla Q).
	\end{align*}
	Where we have the single increments we do the straightforward estimate. It holds  $|\Delta_t\Delta_x J|\leq C |t-t'|^{\frac{\beta-1}{2s}}$. The double increment of the distance function is better, since $d\in C^\beta_p$, so using the fundamental theorem of calculus we can extract $|\Delta_t\Delta_x d|\leq |x-y||t-t'|^{\frac{\beta-1}{2s}}$. 
	Hence we get
	\begin{align*}
	|\Delta_t\Delta_x \rho|\leq &C (|Q(0)|+|x|)|t-t'|^{\frac{\beta-1}{2s}} + C|x-y||t-t'|^{\frac{\beta-1}{2s}} + C|x-y|^{\beta-1}|t-t'|. 
	\end{align*}
	Plugging it inside the integral we end up with 
	$$|\Delta_t I_{12rc}|\leq C\left((|Q(0)|+|x|)|t-t'|^{\frac{\beta-1}{2s}} r^{-s} + |t-t'|^{\frac{\beta-1}{2s}} r^{-\varepsilon} \right) $$
	in view of \cite[Lemma A.9]{AR20} and Lemma \ref{lem:A9fake}.
	
	The other terms $I_2,I_3,I_4$ are estimated in a similar manner. Let us analyse the term $I_2$. Taking the incremental difference and  estimating we get
	$$ |\Delta_t I_2|\leq \int_{B_1} y_{n+}^{s-1}|\rho(y)||\Delta_t S| |x-y|^{-n-2s}dy + C|t-t'|^{\frac{\beta-1}{2s}}\int_{B_1}y_{n+}^{s-1}|x-y|^{-n-2s+\beta}. $$
	The second integral gives $ Cx_n^{-\varepsilon} |t-t'|^{\frac{\beta-1}{2s}}$. In the first one we split $|\rho(y)| \leq C(|Q(0)|+|x|+|x-y|)$. In the term with $|x-y|$ we can use estimate \eqref{eq:incrementS} for $|\Delta_tS|,$ to get 
	$$C|t-t'|^{\frac{\beta-1}{2s}}\int_{B_1} y_{n+}^{s-1}|x-y|^{-n-2s+2}dy,$$
	which is bounded with $Cx_n^{-\varepsilon} |t-t'|^{\frac{\beta-1}{2s}}$, thanks to Lemma \ref{lem:A9fake}.
	The remaining term 
	$$(|Q(0)|+|x|)\int_{B_1} y_{n+}^{s-1}|\Delta_t S| |x-y|^{-n-2s}dy $$
	we split into the integrals in regions $B_r(x)$ and $B_1\backslash B_r(x)$. Away from the pole we can still use the estimate \eqref{eq:incrementS} and estimate the obtained integral with \cite[Lemma A.9]{AR20}, to get $C(|Q(0)|+|x|)|t-t'|^{\frac{\beta-1}{2s}} r^{-s}$. In the region near the pole we use \eqref{eq:incrementsBetter}, which gives 
	$$(|Q(0)|+|x|)r^{-1}|t-t'|^{\frac{\beta-1}{2s}}\int_{B_1} y_{n+}^{s-1} |x-y|^{-n-2s+2}dy $$
	which is also bounded by $C(|Q(0)|+|x|)|t-t'|^{\frac{\beta-1}{2s}} r^{-s}$.
	
	The analysis of the integral $I_3$ is the same as the one of $I_2$, using \eqref{eq:incrementK_1} and \eqref{eq:incrementK1Better} instead of \eqref{eq:incrementS} and \eqref{eq:incrementsBetter}. The term $I_4$ is the same.
	
	We obtained
	$$|I(x,t)-I(x,t')|\leq C \left( (|Q(0)|+|x|)r^{-s} + r^{-\varepsilon} \right) |t-t'|^{\frac{\beta-1}{2s}}, $$
	whenever $(x,t),(x,t')\in Q_r(x_0,t_0)$ with $(x_0)_n = 2r$. This implies the second and the third estimate in the claim, as well as 
	\begin{align*}
	|I(x,t)-I(x,t')|&\leq C \left( |Q(0)|+|x| + r^{s-\varepsilon} \right) |t-t'|^{\frac{\beta-1-s}{2s}}\\
	&\leq C(|Q(0)|+r_0^{s-\varepsilon})|t-t'|^{\frac{\beta-1-s}{2s}},
	\end{align*}
	if additionally $|x_0|\leq r_0$.
	It follows from \cite[Lemma B.2]{K21b}, that $\left[ I\right]_{C^{\beta-1-s}_p(Q_{r_0})}\leq C(|Q(0)|+r_0^{s-\varepsilon}),$ since the space part of the estimate is provided by \cite[Corollary 2.3]{AR20}.
	Since the flattening map $\phi$ is Lipschitz the claim is proved.
\end{proof}

	\appendix
	\section{Technical tools and lemmas} \label{sec:appendix}
	
	\begin{lemma}\label{lem:generalisedDistanceExsistence}
		Let $\Omega$ be $C^{\beta}_p$ in $Q_1$, for some $\beta\in (2s,2).$ 
		Then there exists a function $d\colon \R^{n+1}\to\R$  satisfying
		$$d\in C^{\beta}_p(\R^{n+1})\cap C^\infty_x(\{d>0\}),\quad C^{-1}\operatorname{dist}(\cdot,\partial\Omega\cap Q_1)\leq d\leq C \operatorname{dist}(\cdot,\partial\Omega\cap Q_1) \text{ in }\Omega\cap Q_1,$$
		$$|(\partial_t)^lD^kd|\leq C_{k,l}d^{\beta-k-2sl},\quad \text{in }\{d>0\},\text{ whenever }2sl+k>\beta,$$
		$$\left[ D^2d \right]_{C^{\frac{\beta-1}{2s}}_t(Q_r(x_0,t_0))}\leq Cr^{-1},$$
		whenever $d(x_0,t_0)\leq C_0 r$ and $Q_{2r}(x_0,t_0)\subset \Omega$.
		
		Furthermore there exists a diffeomorphism $\psi\in C^\beta_p(\R^{n+1},\R^{n+1})\cap C^\infty_x(\{d>0\},\{x_n>0\})$, such that $$\psi(\Omega\cap Q_1) = Q_1\cap\{x_n>0\},\quad d(\psi^{-1}(x,t)) = x_n,\quad|D^k \psi|\leq C d^{\beta-k},$$
		in $\{d>0\}, $ for all $k>\beta$ and whenever $d(x_0,t_0)\leq C_0 r$ and $Q_{2r}(x_0,t_0)\subset \Omega$, we have 
		$$\left[ D^2\psi \right]_{C^{\frac{\beta-1}{2s}}_t(Q_r(x_0,t_0))}\leq Cr^{-1}.$$
	\end{lemma}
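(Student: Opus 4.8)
The plan is to build $d$ (and then $\psi$) by flattening the boundary graph and regularizing it at a scale proportional to the distance to $\partial\Omega$, in the spirit of the elliptic constructions in \cite{AR20,DS15}, while carefully tracking the parabolic scaling so as to obtain the fine time‑regularity estimate. By Definition~\ref{definition1.1}, after a translation and a rotation in $x$ we may assume $\partial\Omega\cap Q_1=\{x_n=g(x',t)\}$ with $g\in C^\beta_p(Q_1')$, $[g]_{C^\beta_p(Q_1')}\le G_0$, $g(0,0)=0$ and $\nabla_{x'}g(0,0)=0$; multiplying by a cut‑off we may take $g$ defined on all of $\R^n$ (the $(x',t)$–space) with $[g]_{C^\beta_p(\R^n)}\le CG_0$ without changing $\Omega$ inside $Q_{1/2}$, which suffices after rescaling. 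Fix an even $\zeta\in C_c^\infty(\R^n)$ with $\int\zeta=1$, put $\zeta_h(y)=h^{-(n-1)-2s}\zeta(y'/h,\,y_t/h^{2s})$, and set $G(\cdot,h):=g*\zeta_h$. Since $1<\beta<2$, writing the relevant derivatives as convolutions of $g$ or of $\nabla_{x'}g$ against derivatives of $\zeta_h$ (so $g$ is never differentiated twice), one records: $G$ is $C^\infty$ in $(x',t)$ and in $h>0$; $|G(\cdot,h)-g|\le CG_0 h^\beta$; $|(\partial_h)^l(\partial_t)^m D^k_{x'}G(\cdot,h)|\le CG_0\,h^{\beta-k-l-2sm}$; and, since $D^2_{x'}G(\cdot,h)=(\nabla_{x'}g)*(\nabla_{x'}\zeta_h)$ with $\|\nabla_{x'}\zeta_h\|_{L^1}\le Ch^{-1}$ and $\nabla_{x'}g\in C^{\beta-1}_p\subset C^{\frac{\beta-1}{2s}}_t$,
$$\big[D^2_{x'}G(\cdot,h)\big]_{C^{\frac{\beta-1}{2s}}_t}\le CG_0\,h^{-1}.$$

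Next I would define $d$. Fix a smooth increasing $\kappa\colon\R\to(0,H_0]$ with $\kappa(\sigma)=\mu\sigma$ for $0<\sigma\le\sigma_0$ and $0<\kappa'\le\mu$, where $\mu,\sigma_0,H_0$ are small constants depending on $n,s,\beta,G_0$. As $\beta>1$ we have $|\partial_hG(\cdot,h)|\le CG_0 h^{\beta-1}\le CG_0H_0^{\beta-1}$, so for $\mu$ small the map $\sigma\mapsto x_n-G(x',t,\kappa(\sigma))$ is a contraction and the equation
$$d=x_n-G\big(x',t,\kappa(d)\big)$$
has, for $x_n>g(x',t)$, a unique solution $d=d(x,t)>0$; we set $d=0$ on $\{x_n\le g(x',t)\}$ (and carry out the obvious smooth negative extension below the boundary when a genuinely signed function is wanted for $\psi$). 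By the implicit function theorem $d\in C^\infty_x(\{d>0\})$. From $|d-(x_n-g(x',t))|=|G(x',t,\kappa(d))-g(x',t)|\le CG_0\kappa(d)^\beta$, together with the Lipschitz bound on $g$ in $x'$ and $\nabla_{x'}g(0,0)=0$, one gets $C^{-1}\operatorname{dist}(\cdot,\partial\Omega\cap Q_1)\le d\le C\operatorname{dist}(\cdot,\partial\Omega\cap Q_1)$ in $\Omega\cap Q_1$, and a routine check using the estimates on $G$ gives $d\in C^\beta_p(\R^{n+1})$.

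The quantitative bounds then follow by differentiating the defining relation: $\partial_{x_n}d=(1+\kappa'(d)\partial_hG)^{-1}\in(\tfrac12,2)$, $\partial_{x_i}d=-\partial_{x_i}G\,(1+\kappa'(d)\partial_hG)^{-1}$, $\partial_td=-\partial_tG\,(1+\kappa'(d)\partial_hG)^{-1}$, and inductively $(\partial_t)^lD^kd$ is a finite sum of products of derivatives of $G$ evaluated at $h=\kappa(d)$; counting parabolic orders with the bounds on $G$ yields $|(\partial_t)^lD^kd|\le C_{k,l}\,d^{\beta-k-2sl}$ whenever $2sl+k>\beta$. The one delicate point is the time modulus of $D^2d$. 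Fix $(x_0,t_0)$ with $d(x_0,t_0)\le C_0r$ and $Q_{2r}(x_0,t_0)\subset\Omega$; then $d\sim r$ on $Q_r(x_0,t_0)$, so the regularization scale $h:=\kappa(d)\sim r$ there. Taking the time increment of $D^2d$ at fixed $x$, the dominant contribution is the time modulus of $D^2_{x'}G(x',t,\kappa(d))$ (the remaining lower‑order products are handled the same way). The ``frozen‑scale'' part is bounded by $[D^2_{x'}G(\cdot,h)]_{C^{\frac{\beta-1}{2s}}_t}|t-t'|^{\frac{\beta-1}{2s}}\le Cr^{-1}|t-t'|^{\frac{\beta-1}{2s}}$, while the error from the time‑variation of the scale is controlled by $|\partial_hD^2_{x'}G|\cdot\kappa'(d)\,|d(x,t)-d(x,t')|\le Cr^{\beta-3}\,|t-t'|^{\beta/2s}$, which on $Q_r(x_0,t_0)$ (where $|t-t'|^{1/2s}\le 2r$) is at most $Cr^{\beta-2}|t-t'|^{\frac{\beta-1}{2s}}\le Cr^{-1}|t-t'|^{\frac{\beta-1}{2s}}$ because $\beta\ge1$. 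Hence $[D^2d]_{C^{\frac{\beta-1}{2s}}_t(Q_r(x_0,t_0))}\le Cr^{-1}$.

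Finally I would build $\psi$ by declaring its last spatial component to be $d$: with the signed extension, $\partial_{x_n}d\in(\tfrac12,2)$ on $\{d\neq0\}$, so $(x,t)\mapsto(x',d(x,t),t)$ is a diffeomorphism of $\R^{n+1}$ carrying $\Omega$ onto $\{x_n>0\}$ with $d\circ\psi^{-1}(x,t)=x_n$; composing with a fixed $C^\infty$, $t$‑independent diffeomorphism of $\{x_n>0\}$ supported away from $\partial\Omega\cap Q_{1/2}$ one arranges $\psi(\Omega\cap Q_1)=Q_1\cap\{x_n>0\}$ without affecting anything near $\partial\Omega$. Since $D\psi$ is, on $\{d>0\}$, a smooth non‑degenerate function of $(x',t)$ and $Dd$, all the assertions for $\psi$ — membership in $C^\beta_p(\R^{n+1})\cap C^\infty_x(\{d>0\})$, $|D^k\psi|\le Cd^{\beta-k}$ for $k>\beta$, and $[D^2\psi]_{C^{\frac{\beta-1}{2s}}_t(Q_r(x_0,t_0))}\le Cr^{-1}$ — follow from the chain rule and the corresponding estimates for $d$. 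The main obstacle is precisely the fine time‑modulus bound $[D^2d]_{C^{\frac{\beta-1}{2s}}_t}\le Cr^{-1}$ (and its analogue for $\psi$): it forces the regularization scale to track the distance, and one must simultaneously control the time modulus of $D^2_{x'}G$ at a \emph{frozen} scale $h\sim r$ and the extra error from the scale's own time‑variation, the latter being absorbed exactly because $\beta>1$; everything else is a routine consequence of the implicit relation and the mapping properties of parabolic mollification.
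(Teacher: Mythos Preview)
Your proposal is correct and follows essentially the same route as the paper: parabolic mollification of the boundary graph at a scale tied to the distance, an implicit/fixed-point definition of $d$, and the map $\psi(x,t)=(x',d(x,t),t)$; the paper obtains the fine estimate $[D^2d]_{C^{(\beta-1)/2s}_t(Q_r)}\le Cr^{-1}$ from the bounds $|D^3d|\le Cd^{\beta-3}$ and $|\partial_tD^2d|\le Cd^{\beta-2-2s}$, whereas you split the time increment into a frozen-scale part and a scale-variation part---both arguments are equivalent and exploit $\beta>1$ in the same way.
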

	\begin{proof}
		Let $f\in C^{\beta}_p(\{|x'|\leq1\}\times(-1,1))$ be such that $\Omega\cap Q_1 = \{x_n > f(x',t)\}.$ We can extend $f$ to full space $\R^{n-1}\times\R$, so that its norm does not increase. Let $L= 8||\nabla  f||_{L^\infty}$ and $K = 16s 8^{2s-1} ||\partial_tf||_{L^\infty} $. 
		Choose $\varphi\in C^\infty_c(B_1)$ with $\int\varphi dz = 1$, $\psi\in C^{\infty}_c(-1,1)$ with $\int_{-1}^1\psi d\sigma = 1$, and define 
		$$F(x,t,\tau) = x_n - \int_{\R^{n+1}}f\left(x'-\frac{\tau}{L}z',t - \frac{\tau^{2s}}{K}\sigma\right)\varphi(z)\psi(\sigma)dzd\sigma.$$
		Since $\varphi$ and $\psi$ are compactly supported, $F\in C^\beta_p(\R^{n+1})\cap C^{2s}_\tau$. We compute
		\begin{align}\label{eq:firstDerivatives}
		\begin{split}
		\partial_\tau F(x,t,\tau) = & \frac{1}{L}\int_{\R^{n+1}}\nabla f \left(x-\frac{\tau}{L}z,t- \frac{\tau^{2s}}{K}\sigma\right) \cdot z\varphi(z)\psi(\sigma)dzd\sigma +\\
		& + \frac{2s}{K}\tau^{2s-1}\int_{\R^{n+1}}\partial_t f \left(x-\frac{\tau}{L}z,t- \frac{\tau^{2s}}{K}\sigma\right) \sigma\varphi(z)\psi(\sigma)dzd\sigma,  \\
		\nabla F  =& e_n - \int_{\R^{n+1}}\nabla f\left(x-\frac{\tau}{L}z,t- \frac{\tau^{2s}}{K}\sigma\right)\varphi(z)\psi(\sigma)dzd\sigma,\\
		\partial_t F=&  -\int_{\R^{n+1}}\partial_t f\left(x-\frac{\tau}{L}z,t- \frac{\tau^{2s}}{K}\sigma\right)\varphi(z)\psi(\sigma)dzd\sigma,
		\end{split}
		\end{align}
		so thanks to the choice of $L$ and $K$, we have
		$|\partial_\tau F(x,t,\tau)|\leq \frac{1}{4},$ for $\tau<8$. Therefore, in combination with $ F(x,t,0) = x_n-f(x',t)$, we conclude that for every $(x,t)\in\{ x_n - f(x,t)\in(0,3) \}$ there exists a unique $d = d(x,t)\in (0,8),$ so that 
		\begin{equation}\label{eq:fixedPoint}
		d(x,t) = F(x,t,d(x,t)),
		\end{equation}
		as well as 
		$$\left| x_n-f(x,t) - d(x,t)\right| = \left| F(x,t,0) - F(x,t,d(x,t))\right| \leq \frac{1}{4}d(x,t),$$
		which implies that $d$ is comparable to $x_n-f(x,t)$.
		By implicit function theorem $d$ is $ C^1$ in both variables. Differentiating \eqref{eq:fixedPoint}, we get
		$$\nabla d(x,t) = \frac{\nabla F(x,t,d(x,t))}{1-\partial_\tau F(x,t,d(x,t))}, \quad \partial_t d(x,t) = \frac{\partial_t F(x,t,d(x,t))}{1-\partial_\tau F(x,t,d(x,t))}.$$
		Notice also, that $\partial_n d \geq \frac{1}{2}.$
		Moreover since $F\in C^\infty(\{\tau>0\})$ (it is a convolution with a $C^\infty$ function), it also holds $d\in C^\infty(\{d>0\})$. To get the expressions for higher order derivatives of $d$, we differentiate \eqref{eq:fixedPoint} and then insert the suitable derivatives of $F$. 
		We show here only the computations for the second order derivatives, higher order ones are established analogously. Procedure is always the following. We perform an affine change of variables, to move the variables $x,t$ to $\varphi,\psi$, then derive and finally change the coordinates back. The computations are long and technical, therefore we present only the results. We get
		\begin{align*}
		\partial_j\partial_i F &= -\frac{L}{\tau} \int_{\R^{n+1}}\partial_if\left(x-\frac{\tau}{L}z,t- \frac{\tau^{2s}}{K}\sigma\right)\partial_j\varphi(z) \psi(\sigma)dzd\sigma\\
		&=-\frac{L}{\tau} \int_{\R^{n+1}}\left(\partial_if\left(x-\frac{\tau}{L}z,t- \frac{\tau^{2s}}{K}\sigma\right) - \partial_if(x,t)\right)\partial_j\varphi(z) \psi(\sigma)dzd\sigma
		\end{align*}
		where in the last equality we used that $\int\partial_j\varphi = 0$, since $\varphi$ is compactly supported. Similarly we get
		\begin{align*}
		\partial_j\partial_\tau F =& \frac{1}{\tau} \int_{\R^{n+1}}\nabla f\left(x-\frac{\tau}{L}z,t- \frac{\tau^{2s}}{K}\sigma\right) \cdot \partial_j(z\varphi(z)) \psi(\sigma)dzd\sigma\\
		&+2s\frac{L}{K}\tau^{2s-2} \int_{\R^{n+1}}\partial_t f\left(x-\frac{\tau}{L}z,t- \frac{\tau^{2s}}{K}\sigma\right) \partial_j\varphi(z) \psi(\sigma)dzd\sigma\\
		\partial_\tau\partial_\tau F = & -\frac{1}{\tau L} \int_{\R^{n+1}} \sum_{j,k} \partial_kf \left(x-\frac{\tau}{L}z,t- \frac{\tau^{2s}}{K}\sigma\right)  \partial_j(z_kz_j\varphi(z)) \psi(\sigma) dzd\sigma\\
		& -\frac{2s}{\tau L} \int_{\R^{n+1}} \sum_{k} \partial_kf \left(x-\frac{\tau}{L}z,t- \frac{\tau^{2s}}{K}\sigma\right) z_k \varphi(z) (\sigma\psi(\sigma))' dzd\sigma\\
		& - \frac{2s}{K}\tau^{2s-2} \int_{\R^{n+1}}  \partial_tf \left(x-\frac{\tau}{L}z,t- \frac{\tau^{2s}}{K}\sigma\right) \sigma \nabla\cdot (z\varphi(z)) \psi(\sigma) dzd\sigma\\
		& -\frac{(2s)^2}{K} \tau^{2s-2}\int_{\R^{n+1}} \partial_tf \left(x-\frac{\tau}{L}z,t- \frac{\tau^{2s}}{K}\sigma\right) \sigma^{2s-1} \varphi(z) (\sigma^{\frac{1+2s}{2s}}\psi(\sigma))' dzd\sigma.
		\end{align*}
		Using that $f\in C^{\beta}_p$, we conclude that all the derivatives above are bounded with $C\tau^{\beta-2}$.
		We also compute
		\begin{align*}
		\partial_{tt} F = & -\frac{K}{\tau^{2s}}\int_{\R^{n+1}} \partial_t f \left(x-\frac{\tau}{L}z,t- \frac{\tau^{2s}}{K}\sigma\right) \varphi(z) \psi'(\sigma) dz d\sigma\\
		\nabla\partial_tF = & -\frac{L}{\tau}\int_{\R^{n+1}} \partial_t f \left(x-\frac{\tau}{L}z,t- \frac{\tau^{2s}}{K}\sigma\right) \nabla\varphi(z) \psi(\sigma) dz d\sigma\\
		\partial_\tau\partial_t F = & \frac{1}{\tau}\int_{\R^{n+1}}\partial_t f\left(x-\frac{\tau}{L}z,t- \frac{\tau^{2s}}{K}\sigma\right) \nabla\cdot(z\varphi(z)) \psi(\sigma)dzd\sigma\\
		&+\frac{2s}{\tau} \int_{\R^{n+1}}\partial_t f\left(x-\frac{\tau}{L}z,t- \frac{\tau^{2s}}{K}\sigma\right) \varphi(z) (\sigma\psi(\sigma))'dzd\sigma.
		\end{align*}
		Since again in every expression there are derivatives of compactly supported functions, we deduce 
		$|\partial_{tt}F|\leq C\tau^{\beta-4s}$ and $|\nabla\partial_{t}F|,|\partial_\tau\partial_{t}F|\leq C\tau^{\beta-2s-1}$. 		
		
		Finally, differentiating \eqref{eq:fixedPoint} twice, we extract
		\begin{align*}
		D^2 d & = \frac{1}{1-\partial_\tau F}\left(D^2 F + 2\nabla\partial_\tau F \nabla d^T + \partial_{\tau\tau} F \nabla d \nabla d^T\right)\\
		\partial_{tt} d & = \frac{1}{1-\partial_\tau F}\left(\partial_{tt} F + 2\partial_t\partial_\tau F \partial_t d + \partial_{\tau\tau} F (\partial_t d)^2\right)\\
		\nabla\partial_t d & = \frac{1}{1-\partial_\tau F}\left(\nabla\partial_t F + \nabla\partial_\tau F \partial_t d + \partial_t\partial_\tau F \nabla d+ \partial_{\tau\tau} F \nabla d \partial d\right),
		\end{align*}
		where all the arguments are either $(x,t)$ or $(x,t,d(x,t))$. The previous computations yield
		$$|D^2 d|\leq Cd^{\beta-2},\quad\quad |\partial_{tt}d|\leq Cd^{\beta-4s},\quad\quad|\nabla\partial_t d|\leq C d^{\beta-1-2s}.$$
		These estimates imply that $d\in C^\beta_p(\{d\geq 0 \}).$ Finally using the estimates for $D^3 d$ and $\partial_t D^2 d$, we get 
		$$\left[ D^2d\right]_{C^{\beta-1}_p(Q_r(x_0,t_0))}\leq Cr^{-1},$$
		whenever $d(x_0,t_0)\leq C_0 r$ and $Q_{2r}(x_0,t_0)\subset \Omega$.
		To extend $d$ to the full set $\{x_n>f(x,t)\}$, we take a cut-off $\phi\in C^\infty_c(\left[ 0,\infty\right) ),$ such that $\phi=1$ in $\left[ 0,3\right) $,  $\phi =0$ in $\left[ 4,\infty\right) $ and define
		$$\textbf{d}(x,t) = \phi(d(x,t)) d(x,t) + (1-\phi(d(x,t)) x_n,$$
		which satisfies all properties from the claim.
		
		Once we get $d$, we define 
		$$\psi(x,t) = (x',d(x,t),t).$$
		Then $|D_{x,t}\psi| = |\partial_n d| >\frac{3}{4}$, and the derivatives of $\psi$ inherit estimates of derivatives of $d$.
	\end{proof}

	\begin{lemma}\label{lem:divisionLemma}
		Let $s\in(\frac{1}{2},1)$, $a\in(0,2s)$ and $b\in(0,1)$.
		Let  $f,g,Q$ be functions on $Q_r(x_0,t_0)$ which satisfy $||f-Qg||_{L^\infty(Q_r)}\leq Cr^{a+b}$, and $\left[ f-Qg\right] _{C_p^{a+b}(Q_r(2re_n,0))}\leq C$. Assume that $g$ satisfies $||g^{-1}||_{L^\infty(Q_r(2re_n,0))}\leq C r^{-b}$, and $\left[ g\right] _{C_p^{a}(Q_r(2re_n,0))}\leq Cr^{b-a},$ for all $r\in(0,1)$.
		
		Then we have $$\left[ \frac{f}{g}-Q\right] _{C_p^a(Q_r(2re_n,t_0))}\leq C.$$
	\end{lemma}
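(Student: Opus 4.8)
The plan is to exploit the algebraic identity
\[
\frac{f}{g}-Q=\frac{f-Qg}{g},
\]
so that, writing $h:=f-Qg$, the claim becomes a bound on the $C^a_p$-seminorm of the product $h\cdot g^{-1}$ on $Q_r(2re_n,t_0)$. I would first rescale to unit scale: put
\[
\bar h(y,\tau)=r^{-(a+b)}h\big(2re_n+ry,\,t_0+r^{2s}\tau\big),\qquad
\bar g(y,\tau)=r^{-b}g\big(2re_n+ry,\,t_0+r^{2s}\tau\big)
\]
on $Q_1$. Under this parabolic dilation the hypotheses turn into the scale-invariant bounds $\|\bar h\|_{L^\infty(Q_1)}+[\bar h]_{C^{a+b}_p(Q_1)}\le C$, $\|\bar g^{-1}\|_{L^\infty(Q_1)}\le C$ and $[\bar g]_{C^a_p(Q_1)}\le C$, while $hg^{-1}=r^a\,\bar h\,\bar g^{-1}$ and the seminorm $[\,\cdot\,]_{C^a_p}$ scales by $r^{-a}$; hence it suffices to prove $[\bar h\,\bar g^{-1}]_{C^a_p(Q_{1})}\le C$. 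Since $Q_1$ is bounded, the embedding $C^{a+b}_p\hookrightarrow C^a_p$ gives, for free, $[\bar h]_{C^a_p(Q_1)}\le C$ (and likewise $\|\nabla\bar h\|_{L^\infty}$, the lower-order time seminorms, etc., are all bounded by interpolation).

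The core of the argument is the Leibniz (product) rule for parabolic Hölder seminorms together with a chain-rule estimate for $\bar g^{-1}$. When $a\le 1$ this is immediate: from $\bar g^{-1}(p)-\bar g^{-1}(q)=\dfrac{\bar g(q)-\bar g(p)}{\bar g(p)\bar g(q)}$ one obtains $[\bar g^{-1}]_{C^a_p(Q_1)}\le\|\bar g^{-1}\|_{L^\infty(Q_1)}^2\,[\bar g]_{C^a_p(Q_1)}\le C$, and then
\[
[\bar h\,\bar g^{-1}]_{C^a_p(Q_1)}\le [\bar h]_{C^a_p(Q_1)}\,\|\bar g^{-1}\|_{L^\infty(Q_1)}+\|\bar h\|_{L^\infty(Q_1)}\,[\bar g^{-1}]_{C^a_p(Q_1)}\le C.
\]
Undoing the rescaling gives $[\tfrac{f}{g}-Q]_{C^a_p(Q_r(2re_n,t_0))}\le C$ in this range, which is the range relevant to Corollaries~\ref{cor:basicQuotientEstimate}, \ref{cor:1-epsRegOfQuotient} and most of \ref{cor:expansion2functions3s}.

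For $a\in(1,2s)$ the same scheme works but one must unwind the definition of $C^a_p$ into $[\nabla(\cdot)]_{C^{a-1}_p}$ plus $[\,\cdot\,]_{C^{a/2s}_t}$. Thus one differentiates, $\nabla(\bar h\,\bar g^{-1})=\nabla\bar h\cdot\bar g^{-1}-\bar h\,\bar g^{-2}\,\nabla\bar g$, applies the Leibniz rule to each factor at order $a-1$ and the chain rule to $\bar g^{-2}$, producing cross terms such as $\|\nabla\bar h\|_{L^\infty}[\bar g^{-1}]_{C^{a-1}_p}$, $[\nabla\bar h]_{C^{a-1}_p}\|\bar g^{-1}\|_{L^\infty}$, $\|\bar h\|_{L^\infty}[\bar g^{-2}\nabla\bar g]_{C^{a-1}_p}$, together with the $C^{a/2s}_t$-pieces; all are controlled by the displayed quantities plus interior interpolation on $Q_1$ (for $a>1$ one also uses the pointwise bound $\|\bar g\|_{L^\infty(Q_1)}\le C$, which is consistent with $g$ vanishing at the boundary point and satisfying the hypotheses at all scales $\rho\le r$). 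The main obstacle is therefore not conceptual but bookkeeping: one has to check that every power of $r$ generated by a lower-order quantity cancels exactly, which it does precisely because the hypotheses are calibrated so that $h$ carries weight $r^{a+b}$ and $g^{\pm1}$ carry weights $r^{\pm b}$ uniformly across scales.
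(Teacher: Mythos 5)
Your proposal is correct and mirrors the paper's own proof: both begin from $\tfrac{f}{g}-Q=\tfrac{f-Qg}{g}$ and then apply the Leibniz rule for parabolic H\"older seminorms together with the chain-rule bound $[g^{-1}]\le \|g^{-1}\|^2[g]$, splitting into $a\le 1$ and $a>1$ exactly as you do. Your preliminary rescaling to unit scale is a cosmetic reorganization of the same power-of-$r$ bookkeeping that the paper carries out directly at scale $r$ (where it uses $\nabla(f-Qg)(0,0)=0$ when $a+b>1$, which is what your interpolation step is implicitly providing).
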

	
	\begin{proof}
		We denote $v = f-Qg$. Note that by \cite[Lemma B.2]{K21b} $v$ is $C^{\alpha+\beta}_p(\mathcal{C})$, where $\mathcal{C} =  \cup_{r>0} Q_r(2re_n,0)$. Hence also $\nabla v(0,0) = 0$ if $a+b>1$, and then also $||\nabla v||_{Q_r\cap\mathcal{C}}\leq Cr^{a+b-1}$.
		 
		Assume first that $a<1.$ Then 
		\begin{align*}
			\left[ \frac{v}{g}\right]_{C^a_p(Q_r(2re_n,0))} \leq &\left[ v\right] ||g^{-1}|| + ||v||\left[ g^{-1}\right] \\
			\leq & Cr^{b} r^{-b} + Cr^{a+b} r^{-2b} r^{b-a} \\
			\leq & C.
		\end{align*}
		
		Let now $a>1$.
		We estimate 
		\begin{align*}
			\left[ g^{-1}\right]_{C^a_p} = &\left[ g^{-1}\right]_{C^{\frac{a}{2s}}_t} + \left[ \nabla (g^{-1})\right]_{C^{a-1}_p}\\
			\leq & ||g^{-2}|| \left[ g\right]_{C^{\frac{a}{2s}}_t} + 2 \left[ g^{-1}\right]_{C^{a-1}_p} ||g^{-1}\nabla g|| + ||g^{-2}|| \left[ \nabla g\right]_{C^{a-1}_p}\\
			\leq & Cr^{-2b} r^{b-a} + C r^{-2b} r^{b-a+1} r^{-b}r^{b-1} + C r^{-2b}r^{b-a}\\
			\leq & Cr^{-a-b},
		\end{align*}
		where the estimate for $||\nabla g||$ follows from the one for $\left[ \nabla g\right]_{C^{a-1}_p}$. This gives 
		\begin{align*}
		\left[ \frac{v}{g}\right]_{C^a_p(Q_r(2re_n,0))} \leq &\left[ v\right] ||g^{-1}|| + ||v||\left[ g^{-1}\right] +\left[ v\right]_{C^{a-1}_p} ||\nabla (g^{-1})|| + ||\nabla v||\left[ g^{-1}\right]_{C^{a-1}_p}\\
		\leq & Cr^{b} r^{-b} + Cr^{a+b}r^{-a-b} + C r^{b+1} r^{-2b}r^{b-1} + Cr^{a+b-1}r^{-2b}r^{b-a+1}\\
		\leq & C,
		\end{align*}
		which proves the claim.
	\end{proof}
	
	\begin{lemma}\label{lem:interiorRegularityWithGrowth}
		Let $s\in(0,1)$ and let $L$ be an operator of the form \eqref{eq:operatorForm}. Let $u$ be a solution of
		$$(\partial_t+ L)u = f,\quad\quad \text{ in }Q_1.$$

		Then
		$$\left[ u\right]_{C^{2s-\varepsilon}_p(Q_{1/2})}\leq C\left( ||f||_{L^\infty(Q_1)}+ \sup_{R>1}R^{-2s+\varepsilon}|| u||_{L^\infty(B_R\times(-1,1))}  \right),$$
		where $C$ depends only on $n,s,\varepsilon$ and ellipticity constants.
		
		If additionally $f\in C^{\alpha}_p(Q_1)$, for some $\alpha\in(0,1),$ so that $\frac{\alpha}{2s}\in(0,1)$, we have
		$$\left[ u\right]_{C^{\alpha+2s}_p(Q_{1/2})}\leq C\left( \left[ f\right]_{C^\alpha_p(Q_1)}+||u||_{L^\infty(Q_1)} + \sup_{R>1}R^{-2s+\varepsilon}\left[ u\right]_{C^\alpha_p(B_R\times(-1,1))}  \right).$$ 
		
		Moreover if the kernel $K$ of the operator $L$ satisfies 
		$\left[ K\right]_{C^\alpha(B_r^c)}\leq C_1 r^{-n-2s-\alpha}$, $r>0$, then\footnote{Note that the assumption on the kernel is fulfilled if the kernel is homogeneous and $C^\alpha(\S^{n-1})$.}
		\begin{align*}
			\left[ u\right]_{C^{\alpha+2s}_p(Q_{1/2})}\leq &C\Big( \left[ f\right]_{C^\alpha_p(Q_1)}+\sup_{R>1}R^{-2s-\alpha+\varepsilon}||u||_{L^\infty(B_R\times(-1,1))} \\
			&\quad\quad\quad\quad\quad\quad\quad\text{ }+ \sup_{R>1}R^{2s-\varepsilon}\left[ u\right]_{C^{\frac{\alpha}{2s}}_t(B_R\times(-1,1))}  \Big).
		\end{align*}
	\end{lemma}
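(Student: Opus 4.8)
The plan is to reduce all three inequalities to the known interior estimates for $\partial_t+L$ acting on compactly supported functions, via the standard cutoff trick already employed in the proof of Proposition~\ref{prop:holderBoundaryEstimate}. Fix $\eta\in C^\infty_c(Q_1)$ with $\eta\equiv 1$ on $Q_{3/4}$ and split $u=\eta u+(1-\eta)u$. Since $\partial_t((1-\eta)u)=0$ on $Q_{3/4}$, the truncation $\eta u$ is globally defined, satisfies $\|\eta u\|_{L^\infty(\R^n\times(-1,1))}\le\|u\|_{L^\infty(Q_1)}$, and solves
$$(\partial_t+L)(\eta u)=g:=f-L\big((1-\eta)u\big)\qquad\text{in }Q_{3/4}.$$
Because $1-\eta$ is supported in $B_{3/4}^c$, for $(x,t)$ in the relevant inner cylinder we have $L((1-\eta)u)(x,t)=-\int_{B_{3/4}^c}u(y,t)(1-\eta)(y,t)K(x-y)\,dy$, a convergent tail that I will bound using the prescribed growth of $u$. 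Once $g$ is controlled in the appropriate norm, the first inequality follows from the interior $C^{2s-\varepsilon}_p$ estimate for $\partial_t+L$ with bounded right-hand side, and the second and third from the interior estimate $[\,\cdot\,]_{C^{\alpha+2s}_p(Q_{1/2})}\le C([g]_{C^\alpha_p(Q_{3/4})}+\|\eta u\|_{L^\infty})$; these interior estimates—which require precisely the global-in-time regularity that is encoded in the hypotheses (cf. Remark~\ref{rem:1})—are contained in \cite{FR17,RV18,FR20}, and the passage to $Q_{1/2}$ in the statement is the usual covering argument.

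For the first estimate it is enough to bound $\|g\|_{L^\infty}$. Writing $|u(y,t)|\le C(1+|y|)^{2s-\varepsilon}\sup_{R>1}R^{\varepsilon-2s}\|u\|_{L^\infty(B_R\times(-1,1))}$ and using the ellipticity of $K$ together with $\int_{B_{3/4}^c}(1+|y|)^{2s-\varepsilon}|x-y|^{-n-2s}\,dy<\infty$ for $x$ in the inner cylinder, one gets $|L((1-\eta)u)(x,t)|\le C\sup_{R>1}R^{\varepsilon-2s}\|u\|_{L^\infty(B_R\times(-1,1))}$, hence $\|g\|_{L^\infty}\le\|f\|_{L^\infty(Q_1)}+C\sup_{R>1}R^{\varepsilon-2s}\|u\|_{L^\infty(B_R\times(-1,1))}$, as wanted. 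For the second estimate I also need $[g]_{C^\alpha_p}\le[f]_{C^\alpha_p(Q_1)}+[L((1-\eta)u)]_{C^\alpha_p}$. Rewriting $L((1-\eta)u)(x,t)=-\int_{\R^n}u(x+y,t)(1-\eta)(x+y)K(y)\,dy$ and taking the increment between $(x,t)$ and $(x',t')$, the piece where the increment lands on $u$ is bounded by $(|x-x'|^\alpha+|t-t'|^{\frac{\alpha}{2s}})\int_{B_{1/20}^c}[u]_{C^\alpha_p(B_{|y|+1}\times(-1,1))}K(y)\,dy$, which by the growth hypothesis on $[u]_{C^\alpha_p}$ and $\int_{B_{1/20}^c}(|y|+1)^{2s-\varepsilon}|y|^{-n-2s}\,dy<\infty$ gives the term $\sup_{R>1}R^{\varepsilon-2s}[u]_{C^\alpha_p(B_R\times(-1,1))}$, while the piece where the increment lands on the cutoff contributes $C\|u\|_{L^\infty(Q_1)}(|x-x'|^\alpha+|t-t'|^{\frac{\alpha}{2s}})$; this is exactly the computation carried out in the proof of Corollary~\ref{cor:advancedQuotientEstimate}. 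Feeding this into the interior estimate yields the second inequality.

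The third ("moreover") estimate is the main point, and the only place the hypothesis $[K]_{C^\alpha(B_r^c)}\le C_1r^{-n-2s-\alpha}$ enters. The idea is to split the increment of the tail into a purely spatial and a purely temporal part, moving the $\alpha$-derivative onto the kernel in the spatial part so that only the $L^\infty$-growth of $u$ (not its $C^\alpha_p$-growth) is needed in space. For $x,x'$ in the inner cylinder,
$$L((1-\eta)u)(x,t)-L((1-\eta)u)(x',t)=-\int_{B_{3/4}^c}u(z,t)(1-\eta)(z)\big(K(x-z)-K(x'-z)\big)\,dz,$$
and, splitting according to whether $|x-x'|$ is small or large compared to $|x-z|\gtrsim\tfrac14$, the kernel hypothesis gives $|K(x-z)-K(x'-z)|\le C|x-x'|^\alpha|x-z|^{-n-2s-\alpha}$; using $|u(z,t)|\le C(1+|z|)^{2s+\alpha-\varepsilon}\sup_{R>1}R^{-2s-\alpha+\varepsilon}\|u\|_{L^\infty(B_R\times(-1,1))}$ and $\int_{B_{3/4}^c}(1+|z|)^{2s+\alpha-\varepsilon}|z|^{-n-2s-\alpha}\,dz<\infty$, this is controlled by $|x-x'|^\alpha\sup_{R>1}R^{-2s-\alpha+\varepsilon}\|u\|_{L^\infty(B_R\times(-1,1))}$. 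The time increment $L((1-\eta)u)(x,t)-L((1-\eta)u)(x,t')=-\int_{B_{3/4}^c}(u(z,t)-u(z,t'))(1-\eta)(z)K(x-z)\,dz$ is handled by keeping the difference on $u$: $|u(z,t)-u(z,t')|\le C(1+|z|)^{2s-\varepsilon}|t-t'|^{\frac{\alpha}{2s}}\sup_{R>1}R^{-2s+\varepsilon}[u]_{C^{\frac{\alpha}{2s}}_t(B_R\times(-1,1))}$, and $\int_{B_{3/4}^c}(1+|z|)^{2s-\varepsilon}|z|^{-n-2s}\,dz<\infty$ finishes it. The $\|\eta u\|_{L^\infty}$ term from the interior estimate is absorbed since $\|u\|_{L^\infty(Q_1)}\le\sup_{R>1}R^{-2s-\alpha+\varepsilon}\|u\|_{L^\infty(B_R\times(-1,1))}$, and the interior estimate then gives the claim. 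The main obstacle is exactly this third bookkeeping step: one must match the growth exponents ($2s+\alpha-\varepsilon$ against the kernel decay $|y|^{-n-2s-\alpha}$ in space, $2s-\varepsilon$ against $|y|^{-n-2s}$ in time) so that every tail integral behaves like $\int|y|^{-n-\varepsilon}<\infty$, and one must keep the spatial and temporal increments of the tail strictly separate because the kernel regularity only buys regularity in the $x$-variables; everything else is the standard cutoff-plus-interior-estimate machinery.
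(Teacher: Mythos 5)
Your argument is essentially the paper's proof: the same cutoff reduction to $(\partial_t+L)(\eta u)=f-L((1-\eta)u)$ followed by the interior estimates from \cite{FR17}, the same tail bounds using the prescribed growth, and for the third estimate the same add-and-subtract decomposition isolating a time-increment on $u$ (controlled by $[u]_{C^{\alpha/(2s)}_t}$ with $L^\infty$ growth) and a space-increment on the kernel (controlled by the $C^\alpha$ bound on $K$ together with the stronger $L^\infty$ growth of $u$); the only cosmetic deviation is that you take a spacetime cutoff $\eta\in C^\infty_c(Q_1)$ where the paper uses a purely spatial one $\chi\in C^\infty_c(B_1)$, which makes no difference since $1-\eta$ vanishes on the inner cylinder. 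One remark: you correctly used $R^{-2s+\varepsilon}$ as the weight on $[u]_{C^{\alpha/(2s)}_t}$, which matches the paper's proof and its applications (e.g.\ in Corollary \ref{cor:expansion2functions3s}); the $R^{2s-\varepsilon}$ appearing in the statement of the lemma is evidently a sign typo.
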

	\begin{proof}
		The proof the first estimate we take a smooth cut off function $\chi$, applying the estimates on $u\chi$ and computing the error terms. 
		Let $\chi\in C^\infty_c(B_1)$ so that $\chi\equiv 1$ in $B_{5/6}$. Then the function $\bar{u} = u\chi$ solves 
		$$
		(\partial_t  + L)\bar{u} = f+\bar{f} $$
		where $\bar{f}= -L(u(1-\chi))$. We can estimate
		\begin{align*}
		|\bar{f}(x,t)|\leq & \int_{B_{4/5}^c(-x)}|u(1-\chi)|(x+y,t)K(y)dy\\
		\leq & CC_0 \int_{B_{4/5}^c(-x)}(1+|y|)^{2s-\varepsilon}|y|^{-n-2s}dy\\
		\leq CC_0,	
		\end{align*}
		for $C_0= \sup_{R>1}R^{-2s+\varepsilon}||u||_{L^\infty(B_R)\times(-1,1)}$. The first claim now follows from \cite[Theorem 1.3]{FR17}.
		
		For the second claim, we estimate
		\begin{align*}
		\begin{split}
		|\bar{f}(x,t)-\bar{f}(x',t')| =&  \left| \int_{B_{4/5}^c(-x)}u(1-\chi)(x+y,t)K(y)dy\right.\\
		&\left.-\int_{B_{4/5}^c(-x')}u(1-\chi)(x'+y,t')K(y)dy \right|\\
		\leq& \int_{B_{1/20}^c}|u(x+y,t)-u(x'+y,t')|K(y)dy\\
		&+ ||u||_{L^\infty(Q_1)}\int_{B_1\backslash B_{1/20}}|\chi(x+y)-\chi(x'+y)|K(y)dy\\
		\leq&  (|x-x'|^\alpha+|t-t'|^{\frac{\alpha}{2s}})\left(\int_{B_{1/20}^c}\left[u \right]_{C^\alpha(B_{|y|+1})}K(y)dy + C_\chi||u||_{L^\infty(Q_1)}\right)\\
		\leq&  (|x-x'|^\alpha+|t-t'|^{\frac{\alpha}{2s}})\left(\Lambda C_0
		\int_{B_{1/20}^c}\frac{(|y|+1)^{2s-\varepsilon}}{|y|^{n+2s}}dy+C||u||_{L^\infty(Q_1)}\right)\\
		\leq& (|x-x'|^\alpha+|t-t'|^{\frac{\alpha}{2s}})\left(CC_0+C||u||_{L^\infty(Q_1)}\right),
		\end{split}
		\end{align*}
		where $C_0$ stands for $\sup_{R>1} R^{-2s+\varepsilon}  \left[ u_1\right]_{C^{\alpha}_p(B_R\times(-1,1))}.$ In view of \cite[Theorem 1.1]{FR17}, the second claim is proven.
		
		For the moreover case we need to notice that 
		\begin{align}\label{eq:cutoff}
		\begin{split}
			\int_{B_{4/5}^c(-x)}u(1-\chi)(x+y,t)K(y)dy-\int_{B_{4/5}^c(-x')}u(1-\chi)(x'+y,t')K(y)dy = \\
			\int_{B_{4/5}^c}u(1-\chi)(y,t)K(y-x)dy-\int_{B_{4/5}^c}u(1-\chi)(y,t')K(y-x')dy=\\
			\int_{B_{4/5}^c}\left(u(1-\chi)(y,t)-u(1-\chi)(y,t')\right)K(y-x')dy+\text{   } \\+ \int_{B_{4/5}^c}u(1-\chi)(y,t)\left(K(y-x)-K(y-x')\right)dy.\text{   }
		\end{split}
		\end{align}
		The first term can be bounded with 
		$$ C \sup_{R>1} R^{-2s+\varepsilon}\left[ u\right]_{C^{\frac{\alpha}{2s}}_t(B_R\times(-1,1))} |t-t'|^{\frac{\alpha}{2s}},$$
		while the second one with 
		$$ C C_1\sup_{R>1}R^{-2s-\alpha+\varepsilon}||u||_{L^\infty(R_R\times(-1,1))} |x-x'|^{\alpha}.$$
		Using \cite[Theorem 1.1]{FR17} finishes the proof.
	\end{proof}

\begin{lemma}\label{lem:fakeLemma4.5}
	Let $a,\alpha>0$ such that $a+\alpha>1+s$, $\alpha<s$, and $u_1,u_2,d\in C(Q_1)$, such that
	$u_2\in C^s_p(Q_1)$, $u_2\geq c_0 d^s$, for some $c_0>0$ and
	$$\left[ u_1 - q_ru_2 - Q^{(1)}_r d^s\right]_{C^\alpha_p(Q_r)}\leq C_0 r^{a},$$
	for some $q_r\in\R$ and some $1$-homogeneous polynomials $Q_r^{(1)}$. 
	
	Then there exist $q_0$ and $Q_0^{(1)},$ so that 
	$$\left[ u_1 - q_0u_2 - Q^{(1)}_0 d^s\right]_{C^\alpha_p(Q_r)}\leq CC_0 r^{a}.$$
\end{lemma}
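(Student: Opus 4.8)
The plan is to follow the dyadic-scaling scheme used in the proof of Proposition~\ref{prop:expansion2functions3s} (and in \cite[Proposition~4.4]{AR20}): I would show that, as $r\downarrow 0$, the coefficients $q_r$ and the gradients $\nabla Q_r^{(1)}$ stabilise at a quantitative rate, and then take $q_0,\nabla Q_0^{(1)}$ to be their limits. It is enough to argue along dyadic radii $r=2^{-k}$, the general $r$ being handled by comparing $Q_r$ with the two neighbouring dyadic cylinders. Write $q_k:=q_{2^{-k}}$, $p_k:=\nabla Q_{2^{-k}}^{(1)}$, so $Q^{(1)}_{2^{-k}}(x)=p_k\cdot x$, and note that in the situation where the lemma is applied $(0,0)\in\partial\Omega$, hence $u_2(0,0)=d(0,0)=0$. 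The first step is to control the increments
$$h_k:=\big(u_1-q_{k+1}u_2-Q^{(1)}_{2^{-k-1}}d^s\big)-\big(u_1-q_ku_2-Q^{(1)}_{2^{-k}}d^s\big)=(q_k-q_{k+1})u_2+\big((p_k-p_{k+1})\cdot x\big)d^s.$$
Since $Q_{2^{-k-1}}\subset Q_{2^{-k}}$, the hypothesis applied at radii $2^{-k}$ and $2^{-k-1}$ gives $[h_k]_{C^\alpha_p(Q_{2^{-k-1}})}\le CC_0 2^{-ka}$, and because $h_k(0,0)=0$ this upgrades to $\|h_k\|_{L^\infty(Q_{2^{-k-1}})}\le CC_0 2^{-k(a+\alpha)}$.

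The second step is to read off the two coefficients from this bound. Using $u_2\ge c_0 d^s$, at a point of $Q_{2^{-k-1}}$ at distance $\sim 2^{-k}$ from $\partial\Omega$ one has $u_2\ge c\,2^{-ks}$ while $|((p_k-p_{k+1})\cdot x)d^s|\le C|p_k-p_{k+1}|2^{-k(1+s)}$; factoring $d^s$ out of $h_k$ on the region $\{d>2^{-k-2}\}$ (where $c_0\le u_2/d^s\le C$) and invoking the algebraic estimates for the linear term $(p_k-p_{k+1})\cdot x$, exactly as in the proof of Proposition~\ref{prop:expansion2functions3s} and \cite[Lemmas~A.10 and~A.11]{AR20}, I would obtain
$$|q_k-q_{k+1}|\le CC_0\,2^{-k(a+\alpha-s)},\qquad |p_k-p_{k+1}|\le CC_0\,2^{-k(a+\alpha-1-s)}.$$
Here the hypothesis $a+\alpha>1+s$ is exactly what is needed: both exponents are then positive, so $\sum_k|q_k-q_{k+1}|$ and $\sum_k|p_k-p_{k+1}|$ converge; hence $q_k\to q_0$, $p_k\to p_0$, and, summing the geometric tails and comparing a general $r$ with the nearest power of $2$, $|q_r-q_0|\le CC_0 r^{a+\alpha-s}$ and $|\nabla Q_r^{(1)}-p_0|\le CC_0 r^{a+\alpha-1-s}$. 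Set $Q_0^{(1)}(x):=p_0\cdot x$.

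Finally I would assemble the conclusion via
$$\big[u_1-q_0u_2-Q_0^{(1)}d^s\big]_{C^\alpha_p(Q_r)}\le\big[u_1-q_ru_2-Q_r^{(1)}d^s\big]_{C^\alpha_p(Q_r)}+|q_r-q_0|\,[u_2]_{C^\alpha_p(Q_r)}+\big[\big((\nabla Q_r^{(1)}-p_0)\cdot x\big)\,d^s\big]_{C^\alpha_p(Q_r)},$$
estimating the last two terms with the Leibniz rule for parabolic Hölder seminorms and the scale-wise bounds $[u_2]_{C^\alpha_p(Q_r)}\le Cr^{s-\alpha}$ (from $u_2\in C^s_p$ and $\alpha<s$), $\|d^s\|_{L^\infty(Q_r)}\le Cr^s$ and $[d^s]_{C^\alpha_p(Q_r)}\le Cr^{s-\alpha}$ ($d$ being a regularized distance), and $\|p\cdot x\|_{L^\infty(Q_r)}\le|p|r$, $[p\cdot x]_{C^\alpha_p(Q_r)}\le|p|r^{1-\alpha}$; each of the two terms then equals $CC_0 r^a$, which proves the claim. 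The main obstacle is the coefficient-extraction of the second step: disentangling $|q_k-q_{k+1}|$ from $|p_k-p_{k+1}|$ in the single scalar bound on $h_k$ forces one to use the non-degeneracy $u_2\ge c_0 d^s$ (to detect the $u_2$-term where $d\sim 2^{-k}$) together with the homogeneity structure ``linear $\times\,d^s$'' of the polynomial term; since this is precisely the computation already performed inside the proof of Proposition~\ref{prop:expansion2functions3s}, in the write-up I would either reproduce that computation or refer to it.
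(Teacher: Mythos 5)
Your proposal is correct and follows essentially the same route as the paper's proof: dyadic comparison of neighbouring radii, use of $u_2\ge c_0 d^s$ on the region where $d\sim r$ together with the algebraic norm-equivalence estimates of \cite[Lemmas~A.10, A.11]{AR20} to separate the scalar coefficient from the linear one, summation of the resulting geometric series to produce $q_0,Q_0^{(1)}$ (the hypothesis $a+\alpha>1+s$ making both exponents $a+\alpha-s$ and $a+\alpha-1-s$ positive), and a final triangle-inequality assembly. The only presentational difference is the order of the coefficient extraction (the paper first bounds $|Q^{(1)}_r-Q^{(1)}_{2r}|$ in terms of $|q_r-q_{2r}|$ and then closes the loop via the $L^\infty$ bound, while you package the two increments into a single $h_k$ and extract both at once), and you also make explicit the implicit normalization $d(0,0)=u_2(0,0)=0$ needed to pass from the $C^\alpha_p$ seminorm of $h_k$ to its $L^\infty$ norm.
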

\begin{proof}
	Writing $Q^{(1)}_r(x) = Q^{(1)}_r\cdot x$ and using rescaled \cite[Lemma A.10]{AR20}, we compute
	\begin{align*}
	|Q^{(1)}_r-Q^{(1)}_{2r}|\leq& Cr^{-1}||Q^{(1)}_r-Q^{(1)}_{2r}||_{L^\infty(Q_r\cap \{d>r/2\})}\\
	\leq & Cr^{-1-s}||Q^{(1)}_rd^s-Q^{(1)}_{2r}d^s||_{L^\infty(Q_r)}\\
	\leq & Cr^{-1-s+\alpha}\left[ Q^{(1)}_rd^s-Q^{(1)}_{2r}d^s\right]_{C^\alpha_p(Q_r)}\\
	\leq & Cr^{-1-s+\alpha}\bigg( \left[u_1-q_ru_2- Q^{(1)}_rd^s\right]_{C^\alpha_p(Q_r)} + \left[u_1-q_{2r}u_2 -  Q^{(1)}_{2r}d^s\right]_{C^\alpha_p(Q_r)}\\
	&\quad\quad\quad\quad\quad + \left[ (q_r-q_{2r})u_2\right]_{C^\alpha_p(Q_r)}\bigg)\\
	\leq & CC_0r^{-1-s+\alpha+a} + Cr^{-1}|q_r-q_{2r}|.
	\end{align*}
	Moreover, we have
	\begin{align*}
	||(q_r-q_{2r})u_2 - (Q^{(1)}_r-Q^{(1)}_{2r})d^s||_{L^\infty(Q_r)} 
	\leq & r^\alpha\left[ (q_r-q_{2r})u_2 - (Q^{(1)}_r-Q^{(1)}_{2r})d^s\right]_{C^\alpha_p\infty(Q_r)} \\
	\leq & C_0r^{\alpha+a},
	\end{align*}
	and hence also
	$$||(q_r-q_{2r})u_2 - (Q^{(1)}_r-Q^{(1)}_{2r})d^s||_{L^\infty(Q_r\cap \{d>r/2\})} \leq C_0r^{\alpha+a}.$$
	It follows from \cite[Lemma A.11]{AR20}, that $$|q_r-q_{2r}|\leq CC_0 r^{\alpha+a-s}.$$
	Combining it with the first inequality, we also obtain
	$$|Q^{(1)}_r-Q^{(1)}_{2r}|\leq CC_0 r^{\alpha+a-1-s}.$$
	In the same way as in \cite[Lemma 4.5]{AR20} this implies the existence of the limits $q_0$ and $Q^{(1)}_0$, together with estimates
	$$|Q^{(1)}_0-Q^{(1)}_r|\leq CC_0 r^{\alpha+a-1-s},\quad\quad |q_0-q_r|\leq CC_0 r^{\alpha+a-s},$$
	and so 
	\begin{align*}
	\left[ u_1-q_0u_2-Q^{(1)}_0d^s\right]_{C^\alpha_p(Q_r)}\leq & \left[ u_1-q_ru_2-Q^{(1)}_rd^s\right]_{C^\alpha_p(Q_r)}\\
	&+ \left[( q_0-q_r)u_2\right]_{C^\alpha_p(Q_r)} + \left[ (Q^{(1)}_0-Q^{(1)}_r)d^s\right]_{C^\alpha_p(Q_r)}\\
	\leq & CC_0r^{a}.
	\end{align*}
	Therefore the claim is proved.
\end{proof}

\begin{lemma}\label{lem:A9fake}
	Let $x\in B_{1/2}$ with $x_n>0$, $p\in(0,2s)$ and $q>\max\{1-p,0\}$.
	Then
	$$\int_{B_1}y_{n+}^{p-1}|x-y|^{-n+q}dy\leq C_\varepsilon x_n^{-\varepsilon},$$
	for any $\varepsilon>0$. 
\end{lemma}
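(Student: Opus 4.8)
The plan is to estimate the integral
$$\int_{B_1}y_{n+}^{p-1}|x-y|^{-n+q}\,dy$$
by splitting the domain according to the size of $|x-y|$ relative to $x_n$. Writing $a = x_n \in (0,1/2)$, I would set $A = B_a(x)\cap B_1$ (the region close to the pole) and $B = (B_1\setminus B_a(x))\cap\{y_n>0\}$, and bound the two pieces separately. On $A$ we have $y_n \le x_n + |x-y| \le 2a$, so $y_{n+}^{p-1}$ is... well, here I must be careful: if $p<1$ the power $p-1$ is negative, so $y_{n+}^{p-1}$ blows up as $y_n\to 0$. The right move is \emph{not} to bound $y_{n+}^{p-1}$ pointwise but to integrate it first in the $y_n$-direction: for fixed $y'$ the one-dimensional integral $\int_0^{c}y_n^{p-1}\,dy_n = c^p/p$ converges since $p>0$. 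So I would integrate in $y_n$ first on the slab $\{0<y_n<2a\}$ and then in $y'$.

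More precisely, on the near region $A$ I would bound $|x-y|^{-n+q}$. Since $q>0$ here there is no singularity from that factor, and $|x-y|\le a$ gives $|x-y|^{-n+q}\le$ (something we control once we do the $y_n$ integral). Concretely, using $\int_0^{2a}y_n^{p-1}\,dy_n \le C a^p$ after integrating out $y_n$, the remaining integral over $y'$ in a ball of radius $\le a$ of $|x'-y'|^{-(n-1)+q'}$-type weights (coming from the $n-1$ remaining dimensions, with effective exponent shifted by the $y_n$-integration) is finite because $q>0$; this piece is then bounded by $Ca^{q}$ or $Ca^p$, in any case a positive power of $a$, hence by $Ca^{-\varepsilon}$ trivially (it even goes to $0$). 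On the far region $B$, I would further dyadically decompose: $B_j = \{2^{j-1}a \le |x-y| < 2^j a\}\cap B_1$ for $j\ge 1$ up to $j \lesssim \log(1/a)$, plus the outer piece where $|x-y|\ge c_0$ for a fixed constant. On each annulus $B_j$ we have $y_n \le |x-y| + a \le 2^{j+1}a$, and $|x-y|^{-n+q}\approx (2^j a)^{-n+q}$; integrating $y_{n+}^{p-1}$ over the annulus (again doing the $y_n$-integral first to absorb the negative power when $p<1$) gives roughly $(2^j a)^{n-1}\cdot (2^j a)^{p}\cdot (2^j a)^{-n+q} = (2^j a)^{p+q-1}$. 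Since $p+q>1$ by hypothesis, the exponent $p+q-1>0$ is positive, so summing the geometric series $\sum_{j\ge 1}(2^j a)^{p+q-1}$ over $j$ with $2^j a \lesssim 1$ gives a bound of order $1$ (the largest term, $|x-y|\sim 1$, dominates). The outer piece $|x-y|\ge c_0$ is trivially bounded. Hence the whole integral is bounded by an absolute constant, which is certainly $\le C_\varepsilon x_n^{-\varepsilon}$ for any $\varepsilon>0$.

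I should double check the two scaling constraints: $q > \max\{1-p, 0\}$ is exactly what makes both the near-pole $y'$-integral converge (the $q>0$ part, after the $y_n$-integration has already handled the $p>0$ part) and the dyadic sum in the far region converge at the lower end (the $q>1-p$, i.e. $p+q>1$, part). The condition $p\in(0,2s)$, curiously, is \emph{not} needed for this particular inequality — only $p>0$ matters — but it is harmless; it is presumably there to match the context in which the lemma is applied. The statement with $x_n^{-\varepsilon}$ on the right is weaker than what the proof gives (the proof gives a uniform constant), so I would just state the cleaner bound and note it implies the claimed one; alternatively, if one genuinely wants a blow-up rate this is the regime where $q$ could be allowed to degenerate, but as written there is room to spare.

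The main obstacle, such as it is, is purely bookkeeping: handling the negative exponent $p-1$ correctly by always performing the $y_n$-integration \emph{before} any pointwise bound, and keeping track of the effective exponents on each dyadic annulus so that the geometric series is summed in the right direction (it converges because the common ratio $2^{p+q-1}>1$ but is capped by $|x-y|\lesssim 1$, so it is the \emph{large} scales, not the small ones, that control the sum). There is no analytic difficulty beyond Fubini and a geometric series; the lemma is a routine technical estimate of the type appearing in \cite[Lemma A.9]{AR20}, stated here in the form convenient for the parabolic arguments of the paper.
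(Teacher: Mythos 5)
Your proof is correct in substance but takes a genuinely different route from the paper's in the far region, and it is worth comparing the two. The paper handles $B_1\setminus B_r(x)$ (with $r=x_n/2$) by first making the pointwise exponent reduction $|x-y|^{-n+q}\le|x-y|^{-n+1-p-\varepsilon}$ (valid since $|x-y|\le 1$ and $q>1-p$) and then rescaling by $r$; the $\varepsilon$-weakening is what makes the rescaled integral over $\R^n$ converge at infinity, and it is exactly the source of the $r^{-\varepsilon}$ on the right. Your dyadic decomposition instead keeps the true exponent, bounds each annulus by $(2^j a)^{p+q-1}$, and sums a geometric series with ratio $2^{p+q-1}>1$, so the large scales dominate and the sum is $O(1)$ because of the hard cutoff $|x-y|\lesssim 1$. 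This is sharper: your argument shows the integral is uniformly bounded, with no blow-up in $x_n$ at all, which of course implies the stated $C_\varepsilon x_n^{-\varepsilon}$ bound. The paper's rescaling proof cannot see the cutoff once it passes to $B_{1/r}$ and so must pay the $\varepsilon$; you avoid that by never discarding the constraint $|x-y|\lesssim 1$.

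One detail in your near-region sketch needs tightening. You take $A=B_a(x)$ with $a=x_n$, so $y_n$ reaches $0$, and propose to ``integrate $y_n$ first''; but $|x-y|^{-n+q}$ also depends on $y_n$, so the two integrations do not decouple and the ``effective exponent shifted by the $y_n$-integration'' step is not literally Fubini. The paper sidesteps this by picking the smaller radius $r=x_n/2$, which forces $y_n\ge r$ on $B_r(x)$; then $y_{n+}^{p-1}\le Cr^{p-1}$ pointwise (for either sign of $p-1$, since also $y_n\le 3r$) and $\int_{B_r(x)}|x-y|^{-n+q}\,dy=c_n r^q/q$, giving $Cr^{p+q-1}\le C$ in one line. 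Your version can be made rigorous --- for instance, pass to polar coordinates centered at $x$, bound $\int_{\S^{n-1}}(x_n+\rho\omega_n)_+^{p-1}d\omega\le C(x_n-\rho)^{p-1}$ for $\rho<x_n$, and arrive at a Beta-function integral $\int_0^{x_n}\rho^{q-1}(x_n-\rho)^{p-1}d\rho=B(q,p)\,x_n^{p+q-1}$ --- but it takes more care than the sketch suggests, and the paper's choice of radius makes the near-pole piece essentially trivial. Your remark that $p\in(0,2s)$ is not needed (only $p>0$ is used) is accurate.
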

\begin{proof}
	Denote $r=\frac{x_n}{2}$ and split the integral into $B_r(x)$ and the complement. In the integral with the pole we estimate $y_{n}^{p-1}$ with $r^{p-1}$ and then integrate the pole to obtain another $Cr^q$. Thanks to the assumptions this is bounded.
	
	The remaining part is handled as follows. Choose $\varepsilon>0$ and estimate
	$$\int_{B_1\backslash B_r(x)}y_{n+}^{p-1}|x-y|^{-n+q}dy\leq
	\int_{B_1\backslash B_r(x)}y_{n+}^{p-1}|x-y|^{-n+1-p-\varepsilon}dy ,$$
	since $|x-y|\leq 1$ and $q> 1-p$. Rescaling the integral on the right-hand side we get
	$$r^{-\varepsilon}\int_{B_{1/r}\backslash B_1(x/r)}y_{n+}^{p-1}\left|\frac{x}{r}-y\right|^{-n+1-p-\varepsilon}dy\leq C_\varepsilon x_n^{-\varepsilon},$$
	as stated, since the $n-$th coordinate of $\frac{x}{r}=2$.
\end{proof}


\begin{thebibliography}{99} 
		
		\bibitem{AR20} N. Abatangelo, X. Ros-Oton, \textit{Obstacle problems for integro-differential operators: Higher regularity of free boundaries}, Adv. Math. \textbf{360} (2020), 106931, 61pp.
		
		\bibitem{BFR18} B. Barrios, A. Figalli, X. Ros-Oton,  \textit{Free boundary regularity in the parabolic fractional obstacle problem,} Comm. Pure Appl. Math. \textbf{71} (2018), 2129--2159. 
		
		\bibitem{BWZ17} U. Biccari,  M. Warma, E. Zuazua, \textit{Local regularity for fractional heat equations}, SEMA SIMAI Springer Series. \textbf{17}  (2017).
		
		\bibitem{CF13}  L. Caffarelli,  A. Figalli,  \textit{Regularity of solutions to the parabolic fractional obstacle problem},
		J. Reine Angew. Math. \textbf{680} (2013), 191--233.
		
		\bibitem{CRS17} L. Caffarelli, X. Ros-Oton, J. Serra, \textit{Obstacle problems for integro-differential operators: regularity of solutions
		and free boundaries}, Invent. Math. \textbf{208} (2017), 1155--1211.
		
		\bibitem{CT04} R. Cont, P. Tankov,  \textit{Financial Modelling with Jump Processes}, Chapman \& Hall/CRC Financial
		Mathematics Series, Chapman \& Hall/CRC, Boca Raton, Fla., (2004).
		
		\bibitem{CSS08} L. Caffarelli, S. Salsa, L. Silvestre, \textit{Regularity estimates for the solution and the free boundary of the obstacle
		problem for the fractional Laplacian}, Invent. Math. \textbf{171} (2008), 425--461.
		
		\bibitem{DS15} D. De Silva, O. Savin, \textit{A note on higher regularity boundary Harnack inequality}, Disc. Cont. Dyn. Syst. \textbf{35}
		(2015), 6155--6163.
		
		\bibitem{DS16} D. De Silva, O. Savin, \textit{Boundary Harnack estimates in slit domains and applications to
			thin free boundary problems}, Rev. Mat. Iberoam. \textbf{32} (2016), 891--912.
		
		\bibitem{FR17} X. Fernandez-Real, X. Ros-Oton, \textit{Regularity theory for general stable operators: parabolic equations,} J. Funct. Anal. \textbf{272} (2017), 4165--4221.
		
		\bibitem{FR20} X. Fern\'andez-Real, X. Ros-Oton, \textit{Regularity Theory for Elliptic PDE}, forthcoming book (2022), available at the webpage of the authors.
		
		\bibitem{FR21} X. Fernandez-Real, X. Ros-Oton, \textit{Free boundary regularity for almost every solution to the Signorini problem}, Arch. Rat. Mech. Anal. \textbf{240} (2021), 419--466.
		
		\bibitem{FRS22} A. Figalli, X. Ros-Oton, J. Serra, \textit{Regularity for parabolic nonlocal obstacle problems: a new and unified approach}, in preparation, (2022).
		
		\bibitem{F20} M. Focardi, E. Spadaro, \textit{The local structure of the free boundary in the fractional obstacle problem}, Adv. Calc. Var. \textbf{15} (2022), 323–-349.
		
		\bibitem{GR19} N. Garofalo, X. Ros-Oton, \textit{Structure and regularity of the singular set in the obstacle problem for the fractional
		Laplacian}, Rev. Mat. Iberoam. \textbf{35} (2019), 1309--1365.
		
		\bibitem{G19} G. Grubb, \textit{Limited regularity of solutions to fractional heat and Schr\"odinger equations},  Disc. Cont. Dyn. Syst. \textbf{39} (2019), 3609–-3634.
		
		\bibitem{JN17} Y. Jhaveri, R. Neumayer, \textit{Higher regularity of the free boundary in the obstacle problem for the fractional
		Laplacian}, Adv. Math. \textbf{311} (2017), 748-795.
		
		\bibitem{KRS19} H. Koch, A. R\"uland, W. Shi, \textit{Higher regularity for the fractional thin obstacle problem}, New York J. Math.  \textbf{25} (2019) 745–838.
		
		\bibitem{K21b} T. Kukuljan, \textit{Higher order parabolic boundary Harnack inequality}, Disc. Cont. Dyn. Syst. \textbf{42} (2022).
		
		\bibitem{K21a} T. Kukuljan, \textit{The fractional obstacle problem with drift: higher regularity of free boundaries}, J. Funct. Anal. \textbf{281} (2021), 109114, 60pp.
		
		\bibitem{RT21} X. Ros-Oton, D. Torres-Latorre, \textit{Optimal regularity for supercritical parabolic obstacle problems}, 
		preprint arXiv (2021).
		
		\bibitem{RV18} X. Ros-Oton, H. Vivas, \textit{Higher-order boundary regularity estimates for nonlocal parabolic equations}, Calc. Var. Partial Differential Equations \textbf{57} (2018), 111, 20 pp.
		
		\bibitem{S07} L. Silvestre, \textit{Regularity of the obstacle problem for a fractional power of the Laplace operator}, Comm. Pure Appl. Math. \textbf{60} (2007), 67--112.
	\end{thebibliography}
\end{document}